\newtheorem{theorem}{Theorem}[section]
\newtheorem{lemma}[theorem]{Lemma}
\newtheorem{corollary}[theorem]{Corollary}
\newtheorem{proposition}[theorem]{Proposition}
\newtheorem{conjecture}[theorem]{Conjecture}
\theoremstyle{definition}
\newtheorem{definition}[theorem]{Definition}
\theoremstyle{remark}
\newtheorem{remark}[theorem]{Remark}
\numberwithin{equation}{section}
\renewcommand\bigskip{\medskip}
\def\to{\rightarrow}
\def\T{\mathbb T}
\def\N{\mathbb N}
\def\Q{\mathbb Q}
\def\R{\mathbb R}
\def\Z{\mathbb Z}
\def\E{\mathbb E}
\begin{document}

\title[Furstenberg's intersection conjecture]
{A proof of Furstenberg's conjecture on the intersections of $\times p$ and $\times q$-invariant sets
}

\author[Meng Wu]{Meng Wu}
\address{Department of Mathematical Sciences, P.O. Box 3000, 90014 
University of Oulu, Finland}
\email{meng.wu@oulu.fi}

\thanks{We acknowledge the postdoc fellowships supported by Academy of Finland (Centre of Excellence in Analysis and Dynamics Research) and  ERC grant 306496.}

\subjclass[2010]{11K55, 28A50, 28A80, 28D05, 37C45}
%\keywords{intersections of Cantor sets,  $\times 2,\times 3$-invariant sets, rigidity,  slices of fractals}

\begin{abstract}
We prove the following  conjecture of Furstenberg (1969): if $A,B\subset [0,1]$ are closed and invariant under $\times p \mod 1$ and  $\times q \mod 1$, respectively, and if $\log p/\log q\notin \Q$, then for all real numbers $u$ and $v$,
$$\dim_{\rm H}(uA+v)\cap B\le \max\{0,\dim_{\rm H}A+\dim_{\rm H}B-1\}.$$
We obtain this result as a consequence of our study on the intersections of incommensurable self-similar sets on $\R$.
Our methods also allow us to give upper bounds for dimensions of arbitrary slices of planar self-similar sets satisfying SSC and certain natural irreducible conditions.
\end{abstract}

\maketitle

%\tableofcontents

\section{Introduction}\label{intro}

\subsection{Background and history}

This paper is concerned with Furstenberg's problem \cite{Furstenberg69} about the intersections of Cantor sets. The Cantor sets under consideration are dynamically defined, that is,  they are either invariant sets or attractors of certain dynamical systems. Let $(X,f)$ be a dynamical system  where  $f: X\to X$ is a measurable map on a compact metric space  $X$. Many important dynamical properties of $f$ are displayed by its invariant sets. 
Supposing that we are given two dynamical systems $(X,f)$ and $(X,g)$,  it is reasonable to expect that information about common dynamical features of  $f$ and $g$ can be obtained by comparing their respectively invariant sets.  We are particularly interested in systems $(X,f)$ and $(X,g)$ which are  arisen from two arithmetically or geometrically ``independent'' origins. In this case,  one expects that the two systems should share as few common structures as possible and thus an $f$-invariant set should intersect a $g$-invariant set in as small a set as possible.

%The investigation we mentioned above was initiated by 

Furstenberg has given in \cite{Furstenberg69} some quantitative formulations of the above philosophy. Let $\dim$ denote a dimension function for subsets of $X$ (e.g. Hausdorff dimension). Following Furstenberg, we say that $f$ and $g$ are {\em transverse} if  
$$\dim A\cap B\le \max \{0, \dim A+\dim B-\dim X\}$$
for all  closed sets $A$  and $B$ which are $f$- and  $g$-invariant, respectively.
The present work was motivated by a conjecture of Furstenberg concerning the transversality of two arithmetically ``independent'' systems. 

Two positive real numbers   $a$ and $b$ are said to be {\em multiplicatively independent},  denoted by $a\nsim b$,  if $\log a/\log b\notin \Q.$  For a natural number $m\ge2$, let $T_m: x\mapsto mx \mod 1$ be the $m$-fold map of the unit interval.
We use $\dim_{\rm H}A$ to denote the Hausdorff dimension of a set $A$. 
Furstenberg conjectured that two dynamics $T_p$ and $T_q$ with $p\nsim q$ are transverse. More precisely,
\begin{conjecture}[Furstenberg, \cite{Furstenberg69}]\label{Intersection conjecture}
Assume that $p\nsim q.$ 
Let $A_p, B_q \subset[0,1]$ be closed sets which are invariant under $T_p$ and $T_q$, respectively. Then for all real numbers $u$ and $v$, 
$$\dim_{\rm H} (uA_p+v)\cap B_q\le \max \{0,\dim_{\rm H}A_p+\dim_{\rm H}B_q-1\}.$$
\end{conjecture}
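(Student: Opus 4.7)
The natural plan is to reduce the conjecture to the analogous statement about two incommensurable self-similar sets on $\R$, and then prove the self-similar version. Since any closed $T_p$-invariant subset of $[0,1]$ can be approximated from inside by subshifts of finite type in its base-$p$ coding, whose attractors are (possibly graph-directed) self-similar sets with contraction ratios $p^{-k}$, it would suffice to show that for self-similar sets $K_1, K_2 \subset \R$ with contractions powers of $1/p$ and $1/q$ respectively,
$$\dim_H (uK_1 + v)\cap K_2 \le \max\{0,\dim_H K_1 + \dim_H K_2 - 1\}$$
holds for every $u, v \in \R$. Iterating the IFSs one can further arrange the two sets to satisfy a strong separation condition, at the cost of an arbitrarily small loss in dimension.

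I would then reinterpret $(uK_1+v)\cap K_2$ as the slice of the planar self-similar set $(uK_1+v)\times K_2$ by the diagonal line $\{x=y\}$. This planar set is generated by an IFS with contraction ratios of the form $(p^{-a}, q^{-b})$, and the crucial dynamical input is that $p\nsim q$ forces $a\log p - b\log q$ to equidistribute modulo any nonzero constant, so the two coordinate scales are genuinely non-commensurable. Writing $\pi_1\colon \Sigma_1\to K_1$ and $\pi_2\colon\Sigma_2\to K_2$ for the symbolic codings and choosing a Bernoulli measure $\nu_1\times\nu_2$ on $\Sigma_1\times\Sigma_2$ of dimension close to $\dim_H K_1+\dim_H K_2$, a Marstrand/CP-chain argument should give a slice-dimension bound of $\dim_H K_1+\dim_H K_2-1$ for $(\nu_1\times\nu_2)$-almost every slice, with the non-commensurability ensuring that the scenery limits at slice points are honestly two-dimensional rather than a product of one-dimensional pieces.

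The central and hardest step is then to upgrade the almost-every-slice bound to a uniform bound valid for \emph{every} line of slope $\pm 1$, which is what the conjecture demands, since the diagonal is a single deterministic slice that is generically ``atypical''. I would attempt this via a dynamical rigidity argument: the set of ``bad'' parameters $(u,v)$ for which the slice bound fails is itself approximately invariant under the joint zoom-in dynamics of the two IFSs, so if non-empty and large it can be magnified near any prescribed point; combined with the almost-every-slice bound this should yield a contradiction. A secondary obstacle is the conversion of measure-theoretic estimates into genuine Hausdorff dimension estimates rather than box or packing dimension, which is likely to require an exact-dimensionality result for the natural measures supported on the slice, together with a Frostman-type covering argument. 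I expect the combination of uniform-in-slice control and Hausdorff-dimension-level precision, both of which must be extracted from the single dynamical ingredient $p\nsim q$, to be the main technical hurdle.
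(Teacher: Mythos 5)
Your reduction step has the approximation going the wrong way: to bound $\dim_{\rm H}(uA_p+v)\cap B_q$ from \emph{above} you need supersets $\widetilde{A}\supset A_p$, $\widetilde{B}\supset B_q$ of almost the same dimension (so that $(uA_p+v)\cap B_q\subset (u\widetilde{A}+v)\cap\widetilde{B}$), not inner approximations by subshifts of finite type; an inner approximation could miss the intersection entirely. The correct outer approximation is available precisely because closed $T_m$-invariant sets have equal Hausdorff and box dimensions, so the union of the level-$k$ $m$-adic intervals meeting $A_p$ generates a $p^k$-Cantor superset of dimension at most $\dim_{\rm H}A_p+\epsilon$. This is fixable, but as written the first step fails.

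The more serious gap is the ``central and hardest step'' itself, which you correctly isolate but do not actually supply. Saying that the bad set of parameters $(u,v)$ is ``approximately invariant under the joint zoom-in dynamics'' and can therefore ``be magnified near any prescribed point'' does not produce a contradiction with an almost-every-slice bound: the orbit of a single bad slice under zooming is countable (a null set of parameters), so no Marstrand-type a.e.\ statement is violated, and a priori the bad set could be a single line. What is needed -- and what the paper provides -- is a quantitative mechanism running in the contrapositive direction: starting from \emph{one} slice of upper box dimension $\gamma>0$, Furstenberg's CP-process construction produces an ergodic distribution on slice measures of dimension $\ge\gamma$; from this one builds a positive-entropy invariant measure for a skew product over the irrational rotation by $\theta=\log\alpha/\log\beta$; and Sinai's factor theorem is then used to partition the space into pieces $C_i$ of subexponential box-count such that the rotation angles along the return times to each $C_i$ equidistribute. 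This anchors, for a set of directions of nearly full Lebesgue measure, a slice of box-count $\ge 2^{n(\gamma-\epsilon)}$ passing through a common set of box-count $\le 2^{n\epsilon}$, whence $\overline{\dim}_{\rm B}K\ge 1+\gamma$ by summing over directions. Nothing in your proposal plays the role of this anchoring step, and without it the ``many large slices'' cannot be converted into a lower bound on $\dim_{\rm H}K$. (Your secondary worry about Hausdorff versus box dimension is, by contrast, a non-issue: in the contrapositive one only needs to bound the upper box dimension of the slice, which dominates its Hausdorff dimension, and the ambient product of self-similar sets has equal Hausdorff and box dimensions.)
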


In this paper, we prove Conjecture \ref{Intersection conjecture}. 
We point out that Conjecture \ref{Intersection conjecture} is closely related to another conjecture of Furstenberg about expansions of real numbers in different bases, which is stronger  and remains open. For $x\in [0,1]$, we denote the orbit of $x$ under the map $T_m$ by $\mathcal{O}_m(x)=\{T_m^k(x): k\in \N\}$. 
\begin{conjecture}[Furstenberg, \cite{Furstenberg69}]\label{orbit conjecture}
If $p\nsim q$, then for each $x\in [0,1]\setminus \Q$, we have
\begin{equation}\label{eq conj orb 1}
\dim_{\rm H} \overline{\mathcal{O}_p(x)}+\dim_{\rm H} \overline{\mathcal{O}_q(x)}\ge 1.
\end{equation}
\end{conjecture}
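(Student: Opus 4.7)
The plan is to take Conjecture \ref{Intersection conjecture}, proved in this paper, as the main input and push it via a dynamical magnification argument around $x$ toward the orbit-closure statement. The natural first attempt is to set $A=\overline{\mathcal{O}_p(x)}$, $B=\overline{\mathcal{O}_q(x)}$: these are closed, $T_p$- and $T_q$-invariant respectively, so Conjecture \ref{Intersection conjecture} with $u=1$, $v=0$ yields
$$\dim_{\rm H}(A\cap B)\le\max\{0,\dim_{\rm H}A+\dim_{\rm H}B-1\}.$$
By itself this is vacuous: the hypothesis $x\in A\cap B$ only places a single point in the intersection, and a single point has Hausdorff dimension $0$ regardless of $\dim_{\rm H}A+\dim_{\rm H}B$, so the upper bound carries no information.

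To convert ``$x\in A\cap B$'' into a positive-dimensional piece of intersection on which Conjecture \ref{Intersection conjecture} actually bites, I would exploit the self-similar structure at $x$. For each $n$ let $\phi_n$ be the affine bijection from the $p$-adic interval $I_n(x)\ni x$ of length $p^{-n}$ onto $[0,1]$ realizing $T_p^n$; by $T_p$-invariance, $\phi_n(A\cap I_n(x))\subset A$, and analogously $\psi_m(B\cap J_m(x))\subset B$ for the $q$-adic chart. The change-of-chart map $S_{n,m}:=\psi_m\circ\phi_n^{-1}$ is affine of ratio $p^n/q^m$ with an explicit shift, and
$$\phi_n\bigl(A\cap B\cap I_n(x)\bigr)\subset A\cap S_{n,m}^{-1}(B).$$
Since $p\nsim q$, Weyl equidistribution makes $\{p^n/q^m\}$ dense in $(0,\infty)$ with equidistributed shifts, so every pair $(u,v)$ with $u>0$ arises subsequentially. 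Applying Conjecture \ref{Intersection conjecture} uniformly to $A$ and $uB+v$ bounds the dimension of the magnifications of $A\cap B$ at $x$ by $\max\{0,\dim_{\rm H}A+\dim_{\rm H}B-1\}$ at every scale and in every direction simultaneously.

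Assuming for contradiction that $\dim_{\rm H}A+\dim_{\rm H}B<1$, the final step is to deduce a contradiction from this uniform vanishing combined with $x\in (A\cap B)\setminus\Q$. I would argue via a Marstrand-type slicing argument applied to the CP-distribution produced by the joint scenery $(\phi_n,\psi_m)$ along the orbit of $x$, aiming to show that any irrational $x$ all of whose magnified intersections are $0$-dimensional must have $p$- and $q$-adic expansions that are eventually periodic, contradicting $x\notin\Q$. This last step is precisely the main obstacle, and the reason Conjecture \ref{orbit conjecture} remains open even granted Conjecture \ref{Intersection conjecture}: the intersection theorem produces an upper bound on $\dim_{\rm H}(A\cap B)$, whereas the orbit conjecture demands a matching lower bound ensuring that the local structure at $x$ inherits enough of the global dimension of $A$ and $B$. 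Closing this gap seems to require measure-rigidity input outside the scope of Conjecture \ref{Intersection conjecture} — for example a dimension-attaining $T_p$-invariant measure on $A$ and $T_q$-invariant measure on $B$ for which $x$ is jointly generic, or a scenery-flow analysis in the style of Furstenberg and Hochman — and delivering such input uniformly in every irrational $x$ is the heart of the difficulty.
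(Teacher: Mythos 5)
The statement you were asked about is a conjecture, and the paper does not prove it: it is explicitly described as remaining open, and the discussion following it points out that Conjecture \ref{orbit conjecture} is much stronger than Conjecture \ref{Intersection conjecture} --- when $\dim_{\rm H}A_p+\dim_{\rm H}B_q<1$, the intersection conjecture only gives $\dim_{\rm H}(A_p\cap B_q)=0$, whereas the orbit conjecture demands $A_p\cap B_q\subset\Q$. Your proposal reaches exactly this verdict: you correctly note that taking $A=\overline{\mathcal{O}_p(x)}$, $B=\overline{\mathcal{O}_q(x)}$ and applying the intersection bound is vacuous for the single point $x$, and you correctly identify the missing ingredient as a mechanism that would force every irrational point out of a zero-dimensional intersection, which nothing in the paper (or in your magnification/chart sketch) supplies. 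So there is no proof to compare yours against; your refusal to claim one is the right call, and the intermediate rescaling argument you outline is, as you concede, not a bridge across that gap.

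What the paper does prove is the partial result you could have targeted instead: Theorem \ref{thm zero dim exception set for orbit conjecture}, that the set of $x$ violating \eqref{eq conj orb 1} has Hausdorff dimension zero. Its proof is much more elementary than the CP-scenery route you gesture at. Every closed $T_p$-invariant set is contained in a $p^k$-Cantor set of nearly the same dimension (Proposition \ref{prop approx invariant sets by self-similar sets}); over all $k,\ell$ there are only countably many $p^k$- and $q^\ell$-Cantor sets; and for each such pair $(\widetilde A,\widetilde B)$ with $\dim_{\rm H}\widetilde A+\dim_{\rm H}\widetilde B<1$, Theorem \ref{main theorem 1} (using $p^k\nsim q^\ell$) gives $\overline{\dim}_{\rm B}(\widetilde A\cap\widetilde B)=0$. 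The exceptional set is contained in this countable union of box-dimension-zero sets, hence has Hausdorff dimension zero by countable stability. Note that this argument leans on the box-dimension strengthening in Theorems \ref{main theorem 1} and \ref{theorem stronger version of intersection conj} together with the countability of the family of approximating Cantor sets, neither of which appears in your outline; it makes no attempt to control the local structure of $A\cap B$ at a given irrational $x$, which is precisely the part that remains out of reach.
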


Suppose that $p\nsim q$, $A_p$ is a closed $T_p$-invariant set and  $B_q$ is a closed $T_q$-invariant set, and $\dim_{\rm H}A_p+\dim_{\rm H}B_q<1$. Then Conjecture \ref{Intersection conjecture} implies that $\dim_{\rm H}A_p\cap B_q=0$, while  Conjecture \ref{orbit conjecture} predicts that 
$A_p\cap B_q\subset \Q$. In this respect, Conjecture \ref{orbit conjecture} is much stronger than Conjecture \ref{Intersection conjecture}.  It seems that Conjecture \ref{orbit conjecture}  is still far out of reach of current methods. Nevertheless, {as observed already by Furstenberg}, using Conjecture \ref{Intersection conjecture} one can obtain some partial results towards Conjecture \ref{orbit conjecture}: the set of $x\in [0,1]$ which do not satisfy \eqref{eq conj orb 1} has Hausdorff dimension zero.   See Theorem \ref{thm zero dim exception set for orbit conjecture} for a detailed proof. 

The aforementioned  conjectures belong to the broad category of rigidity problems about $\times p$ and $\times q$ dynamics, where there is a rich literature (see e.g. the survey paper of Lindenstrauss \cite{Lindenstrauss2005} and the references therein). The study of rigidity properties between $\times p$ and $\times q$ dynamics (when $p\nsim q$) was initiated by Furstenberg in his landmark paper \cite{Furstenberg67}. In that paper, Furstenberg established the celebrated Diophantine result: if  $p\nsim q$, then the unit interval itself is the only (infinite) closed set which is both $T_p$ and $T_q$ invariant. He has famously conjectured that the measure version of this should be also true: any Borel probability measure on the unit interval invariant under $T_p$ and $T_q$ is a linear combination of Lebesgue measure and an atomic measure supported on finitely many rational points. The best partial result towards this conjecture is due to  Rudolph and Johnson \cite{Rudolph90, Johnson92} who proved the conjecture under the assumption of positive entropy. The research along this line has been fruitful and influential, and it has led to deep advances in Diophantine approximation and homogeneous dynamics (see \cite{Lindenstrauss2005}).

In another direction, Conjecture \ref{Intersection conjecture} can  also be regarded as a problem about slices of fractal sets. Note that the set $(uA_p+v)\cap B_q$ is, up to an affine coordinate change, the intersection of the product set $A_p\times B_q$ with the line ${\ell}_{u,v}=\{(x,y):y=ux+v\}$. By a classical result of Marstrand \cite{Marstrand54}, for any Borel set $E\subset \R^2$ and each $u\in \R$, Lebesgue almost every $v\in \R$ satisfies 
$$\dim_{\rm H} E\cap {\ell}_{u,v}\le \max\{0,\dim_{\rm H}E-1\}.$$
In general, this is only an almost every result, and there could be exceptional pairs $(u,v)$ for which the above inequality fails. In most cases,  the set of exceptional $(u,v)$ is quite difficult to analyze. 

While explicitly determining the exceptional set is in general intractable, for certain fractal sets with regular arithmetical or geometrical structures, it is widely believed that  the exceptional set should be very small and could  only be caused by some evident algebraic or combinatorial reasons.  For $A_p, B_q$ as in Conjecture \ref{Intersection conjecture}, the set $A_p\times B_q$ is such an example, for which it is clear that certain lines parallel to the axes are exceptional for the slice result, and Conjecture \ref{Intersection conjecture} predicts that  these lines are the only exceptions. 

There is a rich literature about  generic slices of various fractal sets, see e.g.  \cite{Marstrand54,Hawkes75, KP1991, MattilaBook1,  FJ1999, FM1996, BFS2012,SS2015}. However, very little is known about specific slices, and there were few partial results concerning Conjecture \ref{Intersection conjecture} before the present paper. 
The first and perhaps also the best one is due to Furstenberg \cite[Theorem 4]{Furstenberg69}. His result states that under the assumption of the conjecture, if  $\overline{\dim}_{\rm B}(u_0A+v_0)\cap B=\gamma>0$ for some $u_0\neq 0,v_0\in \R$, then for Lebesgue almost $u\in \R$ there is $v$ such that $\dim_{\rm H}(uA_p+v)\cap B_q\ge\gamma$. From the last assertion, it is not hard to deduce that  in this case, we must have $\dim_{\rm H}A_p+\dim_{\rm H}B_q>1/2$ (see \cite[Theorem 7.9]{Hochman2014} for the deduction). Thus, under the assumption $\dim_{\rm H} A_p+\dim_{\rm H}B_q\le 1/2$, Furstenberg's result confirms Conjecture \ref{Intersection conjecture}. We will return back to \cite[Theorem 4]{Furstenberg69} in Subsection \ref{subsection construction of CP-dist 1}.  We would like to mention that the  technique (namely, CP-process) Furstenberg introduced and used in  \cite{Furstenberg69} is also important for the present work,   it will be one of the main ingredients for our proof of Conjecture \ref{Intersection conjecture}.

%The first and perhaps also the best one is due to Furstenberg, which is an easy consequence of the main result of \cite[Theorem 4]{Furstenberg69} and  which asserts that Conjecture \ref{Intersection conjecture} holds under the assumption $\dim_H A_p+\dim_HB_q\le 1/2.$ {\color{red}AD  AD talk about the result of Furstenberg, say that see Section ?? for more details, the tools/techniques developed in furstenberg's pqper will be very important for us}

Recently, Feng,  Huang and Rao \cite{FHR} studied affine embeddings between incommensurable self-similar sets and, as a consequence, they showed that if $p\nsim q$, then for $T_p$-invariant self-similar set $E$ and $T_q$-invariant self-similar set $F$, there exists a (non-effective) positive constant $\delta$ depending on $E$ and $F$ such that the Hausdorff dimension of  the intersection of $F$ with each $C^1$-{diffeomorphic} image of $E$ does not exceed $\min\{\dim_{\rm H}E,\dim_{\rm H}F\}-\delta$. 
%There are some related results by Hochman and Shmerkin, based on general  developments in \cite{HS2015} about equidistributions from fractal measures. 
Later, Feng \cite{FengPersonal} obtained some effective versions of the results of  \cite{FHR}, but  these effective versions are still far from  sufficient for proving Conjecture \ref{Intersection conjecture}. Feng \cite{FengPersonal} also constructed, for any $s,t\in (0,1)$ and $\epsilon >0$,  a $T_p$-invariant set $A$ of dimension $s$ and a $T_q$-invariant set $B$ of dimension $t$ which verify Conjecture \ref{Intersection conjecture} with a loss of $\epsilon$.

Finally, we note that the slice problem may be considered as  ``dual" to the projection problem for fractal sets. In that direction, there is a dual version of Conjecture \ref{Intersection conjecture}, also due  to Furstenberg and recently settled by Hochman and Shmerkin \cite{HS2012}  (some special cases by Peres and Shmerkin\cite{PS2009}), which asserts that under the assumptions of Conjecture \ref{Intersection conjecture},  for each orthogonal projection $P_\theta$ from  $\R^2$ to $\R$ with direction $\theta$ not parallel to the axes,  we have
$$\dim_{\rm H} P_\theta(A_p\times B_q)=\min\{1,\dim_{\rm H}(A_p\times B_q)\}.$$
%For more details about this and related results on measures, we refer the reader to  \cite{HS2012}. 
Recently, there has been considerable
 interest in the study of  projections of dynamically defined Cantor sets, see for instance the survey paper of Shmerkin \cite{Shmerkin2015Survey} and the references therein for more details.

\subsection{Statements of general results}

We prove a more general statement about intersections of regular homogeneous self-similar sets on $\R$ (see below for the definition) under natural irreducibility assumptions. Conjecture \ref{Intersection conjecture} will be a consequence of this general result.

We first recall some relevant definitions. An {\em iterated function system} (IFS) on $\R^d$ is a finite family $\{f_i\}_{i=1}^m$ of strictly contracting maps $f_i:\R^d\to \R^d$.  Its {\em attractor}  is the unique non-empty compact set $X\subset \R^d$ satisfying
$$X=\bigcup_{i=1}^mf_i(X).$$
The IFS $\{f_i\}_{i=1}^m$ is called {\em self-similar} if each map $f_i$ is a similarity transformation. In this case, the attractor  is called a self-similar set.

A self-similar IFS $\{f_i\}_{i=1}^m$ defined on the line $\R$ is said to be {\em regular} and {\em $\lambda$-self-similar} if it satisfies the following conditions:
\begin{itemize}
\item[(1)] {\em  Regular condition:} there exists an open interval $J$ such that $f_i(J)\subset J$ for each $i$ and $f_i(J) \cap f_j(J)=\emptyset$ for $i\neq j$;
\item[(2)] {\em   $\lambda$-self-similar condition:} there exists $0<\lambda<1$ such that each $f_i$ is of the form $f_i(x)=\lambda x+t_i$.
\end{itemize}
 The attractor of a regular and $\lambda$-self-similar IFS   will be called  a {\em regular $\lambda$-self-similar set}.

We use $\overline{\dim}_{\rm B}$ to denote upper box-counting dimension.
\begin{theorem}\label{main theorem 1}
Assume that $\alpha,\beta\in (0,1)$ with $\alpha\nsim \beta$.
Let $C_\alpha\subset \R$ be a regular $\alpha$-self-similar set and let $C_\beta\subset \R$ be a regular $\beta$-self-similar set. Then for all real numbers $u$ and $v$, we have 
$$\overline{\dim}_{\rm B} (uC_\alpha+v)\cap C_\beta\le \max\{0,\dim_{\rm H}C_\alpha+\dim_{\rm H}C_\beta-1\}.$$
\end{theorem}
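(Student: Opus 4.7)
\emph{Strategy.} The plan is to recast the slice problem for the product $C_\alpha\times C_\beta\subset\R^2$ as a measure-theoretic question about CP-distributions in the sense of Furstenberg \cite{Furstenberg69}, and then to use the rigidity coming from $\alpha\nsim\beta$ to upgrade an ``almost every direction'' slice bound into a bound valid for \emph{every} direction. First I would replace $(uC_\alpha+v)\cap C_\beta$ by the intersection of $C_\alpha\times C_\beta$ with the line $\ell_{u,v}=\{(x,y):y=ux+v\}$, dispose of the trivial cases $u=0$ and $\dim_{\rm H}C_\alpha+\dim_{\rm H}C_\beta\le1$ by direct covering arguments, and thereafter assume $u\neq0$ and $\gamma:=\dim_{\rm H}C_\alpha+\dim_{\rm H}C_\beta-1>0$. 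Suppose for contradiction that $\overline{\dim}_{\rm B}(C_\alpha\times C_\beta)\cap \ell_{u,v}=s>\gamma$.

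\emph{CP-distribution setup and extra symmetry.} Equip $C_\alpha$ and $C_\beta$ with the natural self-similar Bernoulli measures $\mu_\alpha,\mu_\beta$ of maximal dimension, and form $\mu:=\mu_\alpha\otimes\mu_\beta$, so $\dim\mu=\gamma+1$. Using the box-dimension lower bound for the slice, I would extract efficient covers of $(C_\alpha\times C_\beta)\cap\ell_{u,v}$ by near-squares at scales $\alpha^n\approx\beta^m$ and convert them, via a local entropy averaging argument, into a lower bound for the $\mu$-mass of thin strips around $\ell_{u,v}$. Averaging the sceneries of $\mu$ along the scales $\alpha^n$ (resp.\ $\beta^m$) produces a CP-distribution $Q$ on micro-measures in $\R^2$ which charges measures whose slices in direction $u$ have dimension $\ge s$; by homogeneity of the two IFSs, $Q$ is invariant under the two discrete scaling semigroups generated by $\log(1/\alpha)$ and $\log(1/\beta)$. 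The assumption $\alpha\nsim\beta$ then forces $\{n\log(1/\alpha)+m\log(1/\beta):n,m\in\N\}$ to be dense modulo $1$, so $Q$ is in fact invariant under the full one-parameter scaling group, and its typical micro-measures look, at every scale, like products of self-similar measures.

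\emph{Slicing inequality and main obstacle.} For such a scaling-invariant CP-distribution I would prove, in the spirit of \cite[Theorem 4]{Furstenberg69} and of the projection theorem in \cite{HS2012}, a dimension-conservation / slicing inequality asserting that $Q$-a.e.\ micro-measure $\nu$ satisfies $\dim(\nu|_\ell)\le\dim\nu-1=\gamma$ for every non-axis-parallel line $\ell$; combining this with the lower bound built from the contradiction hypothesis yields $s\le\gamma$, a contradiction. The hardest step is the passage from the deterministic slice $\ell_{u,v}$ to a statement about typical micro-measures of $Q$: Marstrand's theorem alone says nothing about any single line, so the argument must use the scaling symmetry of $Q$ (which in turn rests on the equidistribution of $\{n\log(1/\alpha)+m\log(1/\beta)\}\bmod1$) to bootstrap the individual slice into a generic one. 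Making this bootstrap quantitative, and simultaneously handling arbitrary regular $\lambda$-self-similar sets rather than $p$-adic Cantor sets, is where I expect the principal technical effort of the proof to lie.
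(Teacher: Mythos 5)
Your overall framing (pass to $K=C_\alpha\times C_\beta$ and the line $\ell_{u,v}$, invoke Furstenberg's CP-machinery, exploit $\alpha\nsim\beta$ through equidistribution of $\{n\log\alpha^{-1}+m\log\beta^{-1}\}$) is in the spirit of the paper, but the proposal has genuine gaps. The first is the claim that the case $\dim_{\rm H}C_\alpha+\dim_{\rm H}C_\beta\le 1$ can be disposed of ``by direct covering arguments'': in that case the theorem asserts that \emph{every} non-axis-parallel slice has upper box dimension $0$, which is the deepest instance of the conjecture (Furstenberg's own Theorem 4 only reaches the range where the sum is at most $1/2$). Your contradiction hypothesis $s>\gamma$ with $\gamma:=\dim_{\rm H}C_\alpha+\dim_{\rm H}C_\beta-1>0$ therefore omits the main case. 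The paper covers all cases at once by proving the contrapositive in the form: if some slice has upper box dimension $\gamma>0$, then $\dim_{\rm H}K\ge 1+\gamma$.

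The second gap is that the CP-distribution is generated from the wrong object. You propose to average the sceneries of the product measure $\mu_\alpha\otimes\mu_\beta$ and to feed the slice information into $Q$ via ``the $\mu$-mass of thin strips around $\ell_{u,v}$''; but a slice of large box dimension does not force large strip mass --- $\mu(\ell_{u,v}^{(\delta)})$ is governed by the orthogonal projection of $\mu$ (of dimension $1$, by Hochman--Shmerkin), so it is comparable to $\delta$ regardless of the size of the slice, and conversely a strip can carry large mass while the slice is a single point. Moreover ``$\dim(\nu|_\ell)$'' for a fixed line is not well defined when $\nu(\ell)=0$. The paper instead runs Furstenberg's construction \emph{on the slice itself}: the measures $\mu_k$ are normalized counting measures on efficient covers of the slice set, so the resulting ergodic CP-distribution has dimension at least $\gamma$ and is supported on measures carried by slices of $K$. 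Finally, your endgame --- a slicing inequality $\dim(\nu|_\ell)\le\dim\nu-1$ for \emph{every} non-axis-parallel $\ell$ and $Q$-a.e.\ micro-measure, obtained by an unspecified ``bootstrap'' from one line to generic lines --- is essentially the theorem restated one level down, and is circular as written. The paper's conclusion mechanism is different: it builds a skew product $U$ on $K\times S^1$, upgrades the CP-typical slice measures to a $U$-invariant ergodic measure of positive entropy (via the Hochman--Shmerkin scenery-to-orbit argument), and then uses Sinai's factor theorem to produce, at scale $2^{-n}$, a set $D_1$ with $N_{2^{-n}}(D_1)\le 2^{Cn\epsilon}$ together with about $2^{n(1-C\epsilon)}$ equidistributed directions, each carrying a slice meeting $D_1$ with $N_{2^{-n}}\gtrsim 2^{n(\gamma-\epsilon)}$ away from any fixed small ball; a direct count then gives $N_{2^{-n}}(K)\gtrsim 2^{n(1+\gamma-o(1))}$, hence $\dim_{\rm H}K\ge1+\gamma$.
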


If we compare Theorem \ref{main theorem 1} and Conjecture \ref{Intersection conjecture}, we notice that in Theorem \ref{main theorem 1}, $\alpha,\beta$ are real numbers, and moreover we consider the upper box-counting dimension of intersections.

From Theorem \ref{main theorem 1},  we can deduce  a {slightly} stronger result than what is stated in Conjecture \ref{Intersection conjecture}.
\begin{theorem}\label{theorem stronger version of intersection conj}
Under the assumptions of Conjecture \ref{Intersection conjecture}, we have for all real numbers $u$ and $v$,  $$\overline{\dim}_{\rm B} (uA_p+v)\cap B_q\le \max \{0,\dim_{\rm H}A_p+\dim_{\rm H}B_q-1\}.$$  
\end{theorem}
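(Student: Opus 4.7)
The plan is to deduce Theorem \ref{theorem stronger version of intersection conj} from Theorem \ref{main theorem 1} by approximating each $T_m$-invariant closed set from above by regular self-similar sets whose dimensions converge to the Hausdorff dimension of the original set.

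For each $n \geq 1$, set
$$D_n := \{k \in \{0, 1, \ldots, p^n - 1\} : A_p \cap [k/p^n, (k+1)/p^n] \neq \emptyset\}$$
and let $\tilde A_n$ be the attractor of the IFS $\{f_k(x) = (x + k)/p^n : k \in D_n\}$. Then $\tilde A_n$ is a regular $p^{-n}$-self-similar set: the homogeneous condition is built in, and the convex open set condition holds with $J = (0,1)$ since the images $f_k(J)$ are pairwise disjoint open subintervals. The $T_p$-invariance of $A_p$ forces $A_p \subseteq \tilde A_n$ by a short induction on the construction stages of $\tilde A_n$: if $x \in A_p \cap [k/p^n, (k+1)/p^n]$ then $k \in D_n$ and $T_p^n(x) = f_k^{-1}(x) \in A_p$, so $x = f_k(T_p^n(x))$ lies in the next stage by the inductive hypothesis. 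Because the OSC holds, $\dim_{\rm H} \tilde A_n = \log |D_n| / (n \log p)$, and by the submultiplicativity $|D_{n+m}| \leq |D_n| \cdot |D_m|$ (again a direct consequence of $T_p$-invariance) this sequence converges. Its limit equals $\dim_{\rm H} A_p$ by the classical fact that Hausdorff and box-counting dimensions agree for closed $T_p$-invariant subsets of $[0,1]$.

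Running the same construction for $B_q$ produces regular $q^{-m}$-self-similar supersets $\tilde B_m \supseteq B_q$ with $\dim_{\rm H} \tilde B_m \to \dim_{\rm H} B_q$. For all $n, m \geq 1$,
$$\frac{\log p^{-n}}{\log q^{-m}} = \frac{n}{m} \cdot \frac{\log p}{\log q} \notin \Q,$$
so $p^{-n} \nsim q^{-m}$ and Theorem \ref{main theorem 1} applies to the pair $(\tilde A_n, \tilde B_m)$. Combining it with the inclusion $(uA_p + v) \cap B_q \subseteq (u \tilde A_n + v) \cap \tilde B_m$ and the monotonicity of $\overline{\dim}_{\rm B}$ gives
$$\overline{\dim}_{\rm B}\,(uA_p + v) \cap B_q \leq \max\{0,\ \dim_{\rm H} \tilde A_n + \dim_{\rm H} \tilde B_m - 1\},$$
and letting $n, m \to \infty$ yields the theorem. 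The substantive content is entirely packaged inside Theorem \ref{main theorem 1}; the present reduction is soft, and the only non-trivial external fact it uses is the equality $\dim_{\rm H} A_p = \lim_n \log |D_n| / (n \log p)$ for closed $T_p$-invariant sets, which is a classical consequence of the subadditive structure described above.
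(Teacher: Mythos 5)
Your proposal is correct and follows essentially the same route as the paper: the paper's Proposition \ref{prop approx invariant sets by self-similar sets} constructs exactly these covering $p^k$-Cantor sets (attractors of the IFS indexed by the $p^k$-adic intervals meeting $A_p$), verifies $A_p\subset\widetilde A$ via $T_p^k$-invariance, invokes Furstenberg's equality of Hausdorff and box dimensions for closed invariant sets to control $\dim_{\rm H}\widetilde A$, and then applies Theorem \ref{main theorem 1} after noting $p^k\nsim q^\ell$. The only cosmetic difference is that you phrase the dimension convergence via Fekete subadditivity of $|D_n|$ while the paper simply fixes one sufficiently large level $k$ for a given $\epsilon$; the substance is identical.
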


\smallskip

\begin{remark}\label{Remark to thm main 1}
(1) One deduces Theorem \ref{theorem stronger version of intersection conj} from Theorem \ref{main theorem 1} by using the fact that if  $A\subset [0,1]$ is a  closed $T_m$-invariant set,  then for any $\epsilon >0$, there exists $k\in \N$ and a  regular $1/m^k$-self-similar set $\widetilde{A}$ such that $A\subset \widetilde{A}$ and $\dim_{\rm H}A\ge \dim_{\rm H}\widetilde{A}-\epsilon$. See Section \ref{section Proofs of the remaining results} for  the detailed proof.

(2) In Theorem \ref{main theorem 1}, we only consider regular  $\lambda$-self-similar IFSs, but it also works for some other cases. For example, the same proof works if the regular condition is replaced by the {\em strong separation condition} (SSC). 
%{\color{red}and with some modifications, we can further relax the convex open set condition to {\em open set condition}, but for simplicity we concentrate on the present case.}

(3) Our approach is purely ergodic theoretical, it is quite flexible and can be extended to more general settings.  A natural generalization of Theorem \ref{main theorem 1} is to consider intersections of linear and non-linear IFS attractors. Under certain natural circumstances, one should expect similar dimension bounds as above for the intersections.  We expect that our methods could be developed further to treat these problems. %We will address this  in a forthcoming paper \cite{Wu0}.

(4) Theorem \ref{main theorem 1} has consequences on problems of embeddings between self-similar sets as studied in \cite{FHR}. See Section \ref{section Proofs of the remaining results}  for details.

\end{remark}

Our next result concerns slices of self-similar sets on the plane with irrational rotation. 

Let $\{f_i\}_{i=1}^m$ be a homogeneous self-similar IFS on $\R^2$, where for fixed $\lambda\in (0,1)$ and $\xi\in [0,1)$,  each $f_i:\R^2\to\R^2$ is defined by
$$f_i(x)=\lambda O_\xi x+t_i, $$  
with $t_i\in \R^2$ and $O_\xi$ being the rotation matrix of angle $2\pi\xi\in [0,2\pi)$.

\begin{theorem}\label{main theorem 2}
Let $X$ be a self-similar set corresponding to an  IFS as above. Suppose that $\xi$ is irrational and the IFS $\{f_i\}_i$ satisfies the strong separation condition. Then 
$$\overline{\dim}_{\rm B}(X\cap {\ell})\le \max\{0,\dim_{\rm H} X-1\}$$
for any line ${\ell}$ of $\R^2$.
\end{theorem}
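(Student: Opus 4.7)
The plan is to mirror the structure of the proof of Theorem \ref{main theorem 1}, with the irrationality of the rotation angle $\xi$ playing the role of the multiplicative independence $\alpha\nsim\beta$. The natural self-similar measure $\mu$ on $X$ (coming from the uniform Bernoulli measure under the IFS coding) is $\lambda$-homogeneous, and zooming into $\mu$ via $f_w^{-1}$ for a word $w$ of length $n$ produces a rescaled copy of $\mu$ rotated by the angle $-2\pi n\xi$. Thus the scenery/CP-process of $\mu$ carries not only a scaling dynamics but also the circle rotation $R_\xi:\theta\mapsto \theta+2\pi\xi \pmod 1$, which is minimal and uniquely ergodic since $\xi\notin\Q$.

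Next, for a line $l$ in direction $\theta$ I would translate the slicing problem into a statement about the orthogonal projection $\pi_{\theta^\perp}:\R^2\to\R$: the slice $X\cap l$ is, up to an affine change of coordinates, the fiber of $\pi_{\theta^\perp}$ above the appropriate point, and bounds on fiber dimension follow, via the CP-process and dimension-conservation apparatus developed earlier in the paper for Theorem \ref{main theorem 1}, from entropy lower bounds for the projected measure $\pi_{\theta^\perp}\mu$ at small scales. An argument in the spirit of Hochman--Shmerkin, adapted to homogeneous self-similar measures on $\R^2$ with irrational rotation, yields that $\dim_{\rm H} \pi_{\theta^\perp}\mu=\min\{1,\dim_{\rm H} X\}$ for \emph{every} direction $\theta$, which is the uniform-in-$\theta$ entropy input needed.

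The final ingredient is the uniformization from $\mu$-typical slices to all slices. As in Theorem \ref{main theorem 1}, an exceptional line $l_0$ violating the claimed bound would, after iterating the IFS and passing through the rotational scenery dynamics, force a positive-measure set of directions to support exceptional slices, contradicting Marstrand's generic slicing theorem. Unique ergodicity of $R_\xi$ is what makes this propagation work; it replaces the irrationality $\log\alpha/\log\beta\notin\Q$ used in Theorem \ref{main theorem 1}. The case $\dim_{\rm H} X\le 1$ is handled separately: under SSC, iterating the IFS and using that $f_w(l)$ eventually has direction differing from $\theta$ by any prescribed amount (again by equidistribution of $n\xi\bmod 1$) reduces $X\cap l$ to a sparse symbolic object of box dimension zero.

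The main obstacle will be adapting the entropy-theoretic slice estimate of Theorem \ref{main theorem 1} to the rotationally driven scenery. In Theorem \ref{main theorem 1} the two IFSs on $\R$ supply independent scales that can be compared via the CP-process; here there is only one IFS, on $\R^2$, and the role of ``two different scales'' is played by the direction of $l$ versus the directions of the iterated IFS images. Ensuring that the CP-process scheme absorbs this substitution, and that the uniform ergodic theorem for $R_\xi$ delivers convergence uniform in $\theta$, is the technical heart of the argument.
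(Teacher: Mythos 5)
Your plan takes a genuinely different route from the paper, and it breaks down at the decisive step. The projection input you invoke --- that $\dim_{\rm H}\pi_{\theta^\perp}\mu=\min\{1,\dim_{\rm H}X\}$ for every direction $\theta$ --- is true (Hochman--Shmerkin) but does not control individual fibers: projection and dimension-conservation results bound the dimension of $\pi_{\theta^\perp}\mu$-almost every fiber, never of a specified fiber, and the whole difficulty of the theorem is that the bound must hold for \emph{every} line. The burden therefore falls entirely on your ``uniformization'' step, and the contradiction you propose there does not exist. Iterating the IFS on an exceptional line $l_0$ with $\overline{\dim}_{\rm B}(X\cap l_0)=\gamma>0$ produces exceptional slices in a \emph{dense} set of directions, and Furstenberg's CP-process argument (reproved in the paper as Theorem \ref{thm Furstenberg result 69}) upgrades this to: for Lebesgue-a.e.\ direction there exists \emph{some} line in that direction meeting $X$ in Hausdorff dimension $\ge\gamma$. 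This does not contradict Marstrand's slicing theorem, which says that for a fixed direction \emph{almost every translate} has small intersection; one bad line per direction is perfectly compatible with it. Had this been a contradiction, Furstenberg would have settled his conjecture in 1969; in fact his result only yields the conjecture when the dimensions sum to at most $1/2$.

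The paper closes this gap by a different mechanism, using neither projections nor Marstrand. From the CP-distribution built on $l_0\cap X$ it constructs a $W$-invariant ergodic measure $\nu$ of positive entropy ($W$ being the expanding-and-rotating inverse of the IFS) whose typical points lie on uniformly ``fat'' slices, and then applies the Sinai-factor-theorem result (Theorem \ref{proposition ergodic results}) to produce, at every scale $2^{-n}$, a single set covered by only $2^{n\epsilon}$ dyadic cubes through which pass fat slices in at least $2^{n(1-\epsilon)}$ essentially distinct directions, each contributing at least $2^{n(\gamma-\epsilon)}$ cubes away from any fixed ball. A direct box-count of the union of these slices gives $N_{2^{-n}}(X)\ge 2^{n(1+\gamma-o(1))}$, hence $\dim_{\rm H}X\ge 1+\gamma$. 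It is this quantitative ``many directions through one small set'' statement --- not genericity of directions --- that converts a single fat slice into a lower bound on $\dim_{\rm H}X$, and nothing in your proposal supplies it. (Your separate treatment of the case $\dim_{\rm H}X\le 1$ is also unnecessary and unsubstantiated: the contrapositive argument shows that any slice of positive box dimension forces $\dim_{\rm H}X>1$, which covers that case automatically.)
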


\begin{remark}
 The irrationality condition for $\xi$ is necessary, as we can see from the 4-corner 1/3-Cantor set (i.e., the product of the classical 1/3-Cantor set $C$ with itself): certain lines parallel to the $x$ or $y$-axes intersect $C\times C$ in a set which is a copy of $C$. 

%(2) The above theorem continues to hold if for each $i$, $f_i(x)=\lambda_i O_{\xi_i} x+t_i$ with $\lambda_i\in (0,1), \xi_i$  irrational, and the $\lambda_i$'s and $\xi_i$'s possibly different. Our methods can be developed further to  handle this case, the details of this will appear  elsewhere.
\end{remark}

We note that Theorems  \ref{theorem stronger version of intersection conj} and \ref{main theorem 2} have been simultaneously and independently proved by P. Shmerkin \cite{Shmerkin2016} using completely different (additive combinatorial) methods.

\subsection{Strategy of the proof}
Let us briefly describe our strategy for proving Theorem \ref{main theorem 2}. The proof of Theorem \ref{main theorem 1} follows the same strategy, but is a bit more technical. For a set $A\subset \R^2$, we denote by $N_{\delta}(A)$  the minimal number of  balls of diameter $\delta$ needed to cover $A$.

Let $X$ be a self-similar set satisfying the conditions of Theorem \ref{main theorem 2}.  Our overall strategy is to show that whenever there exists a line $\ell_0$ such that $\overline{\dim}_{\rm B}X\cap \ell_0=:\gamma>0$, then we must have $\dim_{\rm H}X\ge 1+\gamma$. To prove this,  we proceed to show that for any $\epsilon>0$ and all large enough $n$ there exist  $E_n^\epsilon\subset X$ and  a set of angles $F_n^\epsilon\subset [0,2\pi)$ satisfying the following properties (1)-(3). 
\begin{itemize}
\item[(1)] $N_{2^{-n}}(E_n^\epsilon)\le 2^{n\epsilon}$; 
\item[(2)] $N_{2^{-n}}(F_n^\epsilon)\ge 2^{n(1-\epsilon)}$; 
\item[(3)] For each $t\in F_n^\epsilon$ there exists a line $\ell_t$ with angle $t$ intersecting $E_n^\epsilon$ such that  $\inf_{x\in X}N_{2^{-n}}\left((X\cap \ell_t)\setminus B(x,r_0)\right)\ge 2^{n(\gamma-\epsilon)}$ where $r_0=r(\epsilon)>0$ is some constant not depending on $n$. 
\end{itemize}
From these estimates, one can deduce that $\overline{\dim}_{\rm B} X\ge 1+\gamma$. Since the self-similar set $X$ has equal Hausdorff and upper box dimensions, we get $\dim_{\rm H} X\ge 1+\gamma$; {see Section \ref{section Proof of main Theorem 1}.}

To show the existence of the sets  $E_n^\epsilon$ and $F_n^\epsilon$  described above, we use ergodic methods. We consider the dynamical system $(X, W)$ where $W$ is the inverse map of the IFS $\{f_i\}_{i=1}^m$ on $X$, {that is, the restriction of $W$ on $f_i(X)$ is $f_i^{-1}$.} Then $W$ is expanding and rotating, for each $k\ge 1$ the map  $W^k$ transforms a slice $\ell\cap X$ into a finite family $L_k(\ell)$ of slices and the angle of each slice in $L_k(\ell)$ is rotated by $-k\xi$ compared to that of $\ell$. For $z\in \ell\cap X$, we denote by $S(\ell,z,k)$ the unique slice in $L_k(\ell)$ containing $W^k(z)$. 

Now, {for any $\epsilon>0$,} we would like to find a slice $\ell\cap X$ and a  point $z\in \ell\cap X$ such that there exists a set $E_n^\epsilon\subset X$ satisfying the following:
\begin{itemize}
\item[(i)] $N_{2^{-n}}(E_n^\epsilon)\le 2^{n\epsilon}$; 
\item[(ii)] The set $F_n^\epsilon(z):=\{-k\xi\mod 2\pi: W^k(z)\in E_n^\epsilon\}$ satisfies $N_{2^{-n}}(F_n^\epsilon(z))\ge 2^{n(1-\epsilon)}$;
\item[(iii)] For {MOST} $k\in \{i\in\N: W^i(z)\in E_n^\epsilon\}$, we have 
\begin{equation}\label{eq intro strategy 1}
\inf_{x\in X}N_{2^{-n}}\left(S(\ell,z,k)\setminus B(x,r_0)\right)\ge 2^{n(\gamma-\epsilon)},
\end{equation}
{where MOST means such $k$'s have relative density $1-\epsilon$ in $\{i\in\N: W^i(z)\in E_n^\epsilon\}$.}
\end{itemize}
To achieve this goal, we first construct an ergodic $W$-invariant measure $\nu$ with positive entropy $h(\nu,W)>0$ such that for $\nu$-a.e. $z$, there exists some ``good" slice $\ell\cap X$ such that $z\in \ell\cap X$ and the estimate \eqref{eq intro strategy 1} holds for most $k\in \N$. Such a measure $\nu$ will be constructed in two steps. First, based on the initial slice $\ell_0\cap X$ with upper box dimension $\gamma$, we apply  Furstenberg's CP-process machinery to create a rich family of 
``nice" measures $\mu$ which are supported on slices of $X$, where  ``nice" roughly means that for $\mu$-a.e. $z$ on the supporting slice $\ell\cap X$ of $\mu$, \eqref{eq intro strategy 1} holds for most $k\in \N$.  Then a beautiful argument due to Hochman and Shmerkin \cite[Theorem 2.1]{HS2015}, which relates the small-scale structure of a measure to the distribution of $W$-orbits of its almost every point, will enable us to construct a $W$-invariant measure $\nu$ based on a  ``nice" measure provided by Furstenberg's CP-process. We show that this $W$-invariant measure $\nu$ admits the desired properties.

After having constructed such a $W$-invariant (ergodic) measure $\nu$, we apply our third ingredient, which is a general result in ergodic theory and  a consequence of Sinai's factor theorem, to show that the space $X$ can be partitioned (up to a part of small $\nu$-measure) into finitely many subsets $\cup_j A_j$ such that for $\nu$-a.e. $z$ and for each $j$ the set $E_n^\epsilon:=A_j$  satisfies the above properties (i) and (ii). 

We would like to mention that if we could prove that  the measure $\nu$ is weak-mixing (or more precisely, the spectrum of the system $(X,W,\nu)$ does not contain $\xi$), then it is easy to show that for any measurable set $A\subset X$ with $N_{2^{-n}}(A)\le 2^{n\epsilon }$  and $\nu(A)>0$, the set $E^\epsilon_n:=A$ satisfies the required  properties (i) and (ii) for $\nu$-a.e. $z$.  But from the construction of $\nu$, it seems difficult to get any information about the mixing or spectral properties of $\nu$. Instead, we have Sinai's factor theorem at our disposal, which provides us a Bernoulli factor system of $(X,W,\nu)$ with the same entropy as that of $\nu$, so we can first establish the required properties in the factor system and then ``transfer" the results back to the original system $(X,W,\nu)$. 
We note that the application of Sinai's factor theorem in the study of the kind of problems  considered in the present paper seems  new  and we hope that it may be useful for investigating other related questions.

For proving Theorem \ref{main theorem 1}, we follow in principle the same scheme as described above, but instead of considering a single transformation on $K=C_\alpha\times C_\beta$, we consider a skew product $U$ on $K\times [0,1)$.  The {component} of the map $U$ on $K$ is induced by the inverse maps of the defining IFSs of $C_\alpha$ and $C_\beta$ and  has the effect that it transforms a slice $\ell\cap K$ into finitely many pieces of slices whose slopes are changed in a way similar as the irrational rotation of angle $\theta=\log \alpha/\log \beta$ comparing to that of $\ell\cap K$.

There will be three main steps in the proof of Theorem \ref{main theorem 1}, as for the case of Theorem \ref{main theorem 2}.
First, assuming the existence of a  slice $\ell_0\cap X$ with upper box dimension $\gamma>0$, we construct a CP-distribution which is supported on ``nice" slice measures (with dimension $\gamma$) on $K$. Then based on these  ``nice" measures, we construct a $U$-invariant (ergodic) measure $\nu_\infty$ whose marginal on $K$ satisfies some similar ``nice slice" properties as that of $\nu$ (i.e., almost every point with respect to the marginal of $\nu_\infty$ lies on a ``good" slice of $K$). After the construction of such a measure $ \nu_\infty$, we proceed to the last step: apply our ergodic theoretic result to the system $(K\times [0,1),U,\nu_\infty)$ and conclude the proof.

\subsection{Organization of the paper}

In Section \ref{section notation}  we present some general notation, and collect some notions and basic properties of symbolic spaces, entropy, dimension and dynamical systems. In Section \ref{section CP-process}  we recall the CP-process theory. 
Sections \ref{section Constructions of CP-distributions on K}-\ref{section Proof of Theorem main 1} are devoted to the proof of Theorem \ref{main theorem 1}.
In Section  \ref{section Constructions of CP-distributions on K} we construct an ergodic CP-distribution which is supported on slice measures of $C_\alpha\times C_\beta$. In Section  \ref{subsection special measure 1} we define the skew-product $U$  and  construct the $U$-invariant measure $\nu_\infty$. In Section \ref{section ergodic results} we state and prove our general ergodic theoretic result. In Section \ref{section Proof of Theorem main 1} we prove Theorem \ref{main theorem 1}. In Section \ref{section Proof of Theorem main 2} we sketch the proof of Theorem \ref{main theorem 2}. In Section 
\ref{section Proofs of the remaining results} we present
an application of Theorem \ref{main theorem 1} on embeddings of self-similar sets, and we complete proofs of the remaining statements.

\subsection{Acknowledgements}

We gratefully acknowledge the helpful suggestions of P. Shmerkin on the presentation of the material in Section 4.
We also wish to express our thanks to A. Algom, A.H. Fan, M. Hochman, E. J\"arvenp\"a\"a,  M. J\"arvenp\"a\"a, T. Orponen  and V. Suomala for their useful comments on the writing of the paper. We are particularly indebted to the anonymous referee for a very through reading and many helpful suggestions which greatly improved the presentation of the paper.

\subsection{Summary of notation} For the reader's convenience, we summarize our main notation conventions in the following table.

\medskip

\begin{tabular}{l r @{\ \ \ \ } l}

\hline

$B(x,r)$ & &The closed ball of radius $r$ around $x$. \\

\smallskip

$\mathcal{P}(X)$ & &Space of probability measures on $X$. \\

\smallskip

$\mu, \nu, \eta, \upsilon,\vartheta$ & &Measures. \\

\smallskip

$P,Q$ & &Probability distributions (elements of $\mathcal{P}(\mathcal{P}(X))$). \\

\smallskip

$\mu|_A$ & &Restriction of $\mu$ on $A$. \\

\smallskip

$\underline{D}(\mu,x),\overline{D}(\mu,x)$ & &Lower and upper local dimension of $\mu$ at $x$ (Section \ref{subsection dimension and entropy1}). \\
\smallskip

$\mathcal{D},\mathcal{A},\mathcal{F}$ & &Partitions. \\

\smallskip

$\mathcal{D}_n(\R^d)$ (or $\mathcal{D}_n$) & &Partition of $\R^d$ into $n$-th level dyadic cubes (Section \ref{subsection dimension and entropy1}). \\

\smallskip

$\mathcal{D}_n(x),\mathcal{A}(x)$ & &The element of $\mathcal{D}_n$ (resp. $\mathcal{A}$) containing $x$. \\

\smallskip

$N_{2^{-n}}(A)$ & &The number of elements of $\mathcal{D}_n$ intersecting $A$ (Section \ref{subsection Partition and entropy 1}).\\

\smallskip

$\Lambda$ & &Alphabet set (finite). \\

\smallskip

$\sigma$ & &Shift map, $\sigma(x)_n=x_{n+1}$. \\

\smallskip

$[a]$ & &Cylinder set corresponding to $a\in \Lambda^n$. \\

\smallskip

$\mu^{[a]}$ & &$\sigma^n(\mu|_{[a]})/\mu([a])$ (Section \ref{subsection Dimension of CP-processes 11}). \\

\smallskip

$H(\mu,\mathcal{A})$ & &Shannon entropy (Section \ref{subsection dimension and entropy1}). \\

\smallskip

$\mathcal{A}_n^t$ & &Partition (Definition \eqref{eq: definition partition A-n-t 1}). \\
\smallskip

$\mu^{\mathcal{A}_n^t(z)}$ & &Definition \eqref{eq: def magnification measure 1}. \\

\hline
\end{tabular}

\section{Notation and preliminaries} \label{section notation}

\subsection{General notation and conventions} 

We use $\sharp A$ to denote the cardinality of a set $A$. In a metric space, $B(x,r)$ denotes the closed ball of radius $r$ around $x$. 

In this paper, a measure is always a Borel probability measure. The set of all Borel probability measures on a metric space $X$ will be denoted by $\mathcal{P}(X)$. Usually, we will not mention the $\sigma$-algebra of a measurable space; sets and functions are implicitly assumed to be Borel measurable when it is required. 

 If $X$ and $Y$ are metric spaces, and $f:X\to Y$ is any measurable map, then for any $\mu\in \mathcal{P}(X)$, we define $f\mu$ as the push-forward measure $\mu\circ f^{-1}$.
 
The topological support of a measure $\mu$ is denoted by ${\rm supp}(\mu)$; the restriction of $\mu$ on a set $E$ is denoted by $\mu|_E$.

We use $\delta_x$ to denote the Dirac measure at a point $x$.

We will use standard ``big O'' and ``little o'' notation.

\subsection{Symbolic space} 

In this subsection, we recall some classical notion for symbolic spaces.

Let $\Lambda$ be a finite set which we call an {\em alphabet set}. Let $\Lambda^\N$ be the symbolic space of infinite sequences from the alphabet set. We endow $\Lambda^\N$ with the standard metric $d_\rho$ with respect to a number $\rho\in (0,1)$:
\begin{equation}\label{eq: def distance simbolic 1}
d_\rho(x,y)=\rho^{\min\{n:x_n\neq y_n\}}.
\end{equation}
Then $(\Lambda^\N,d_\rho)$ is a compact totally disconnected metric  space. 

We denote by $\Lambda^*=\bigcup_{n\ge 0}\Lambda^n$  the set of finite words (with the convention that $\Lambda^0=\{\emptyset\}$). For $n\ge 0$, the {\em length} of a word $u\in \Lambda^n$, denoted by $|u|$, is defined to be $n$. For $u\in \Lambda^n$, the $n$-th level {\em cylinder} associated to $u$ is the set 
$$[u]=\{x\in \Lambda^\N: x_1\cdots x_n=u\}.$$ Every cylinder is a closed and open set. For $x\in \Lambda^* \cup \Lambda^\N$, we will use $$x_1^k=x_1\cdots x_k$$
to represent the word consisting of the $k$ first letters of $x$ when $k\le |x|$. Define the left-shift $\sigma$ on $\Lambda^\N$ by 
$$\sigma((x_n)_{n\ge 1})=(x_{n+1})_{n\ge 1}.$$

\subsection{Dimension and entropy} \label{subsection dimension and entropy1}

In this subsection, we recall some basic notion and facts about dimension and entropy of measures (or sets).

We use $\dim_{\rm H} A$ and $\overline{\dim}_{\rm B} A$ to denote the Hausdorff dimension and upper box-counting dimension of a set $A$, respectively.

\subsubsection{Dimension of measures}
Let $\mu$ be a Borel measure on a metric space. The {\em lower (Hausdorff) dimension} of $\mu$ is defined as 
$$\dim_*(\mu)=\inf\left\{\dim_{\rm H} A: \mu(A)>0\right\}.$$
Closely related to the lower dimension of $\mu$ is the {\em lower local dimension}, defined at each $x\in {\rm supp}(\mu)$ as 
$$\underline{D}(\mu,x)=\liminf_{r\to0}\frac{\log \mu(B(x,r))}{\log r}.$$
Similarly, we can consider the upper limit and define the upper local dimension $\overline{D}(\mu,x)$ of $\mu$ at $x$. When $\underline{D}(\mu,x)=\overline{D}(\mu,x)$, we say that the local dimension of $\mu$ at $x$ exists and denote it by $D(\mu,x)$. If the local dimension of $\mu$ exists and is constant $\mu$-almost everywhere, then $\mu$ is called {\em exact dimensional} and the almost sure local dimension is denoted by $\dim (\mu)$. For more details about different definitions of dimensions of measures, we refer the readers to \cite{Fan94, Falconerbook1,MattilaBook1,FanLauRao}.

%We will need the following facts about lower dimensions.
%\begin{lemma}[\cite{Fan94}] \label{lemma charact of lower dim by local lower dim}
%Let $\mu$ be a Borel measure on a metric space $X$.
% \begin{itemize}
%\item[(1)]  $\dim_*(\mu)={\rm essinf}_{x\sim \mu}\underline{D}(\mu,x);$ 
%\item[(2)]  If $P\in \mathcal{P}(\mathcal{P}(X))$ with $\mu=\int \nu dP(\nu)$, then
%$$\dim_*(\mu)\ge {\rm essinf}_{\nu\sim P}\dim_*(\nu).$$
%\end{itemize}
%\end{lemma}

\subsubsection{Partitions and entropy} \label{subsection Partition and entropy 1}
Let $\mu$ be a Borel measure on a metric space $X$. For a finite or countable partition $\mathcal{A}$ of $X$, the entropy of $\mu$ with respect to $\mathcal{A}$ is 
$$H(\mu,\mathcal{A})=-\sum_{A\in \mathcal{A}}\mu(A)\log \mu(A)$$
with the convention that $0\log0=0.$ {Here and in what follows, the logarithm is in base $e$.}

Next, we define entropy dimension of measures--first in the symbolic space, then in the Euclidean space.

In a symbolic space $(\Lambda^\N,d_\rho)$, let $\mathcal{F}_n$ be the partition of $\Lambda^\N$ given by the $n$-th level cylinder sets $\{[u]: u\in \Lambda^n\}$. For a set $A\subset \Lambda^\N$, we will use $N_{\rho^n}(A)$ to count the number of elements of $\mathcal{F}_n$ intersecting $A$.
For $\mu\in \mathcal{P}(\Lambda^\N)$, we define the {\em entropy dimension} of $\mu$ by 
$$\dim_e(\mu)=\lim_{n\to\infty}\frac{1}{-n\log \rho}H(\mu,\mathcal{F}_n),$$
if the limit exists; otherwise we consider the upper and lower entropy dimensions $\overline{\dim}_e(\mu)$ and $\underline{\dim}_e(\mu)$ defined by replacing limit, respectively, by $\limsup$ and $\liminf$.

Now, we define the entropy dimension on Euclidean space. For any $n\ge 0$, let $\mathcal{D}_n(\R^d)$ be the collection of $n$-th level {\em dyadic cubes}  of $\R^d$, that is, 
$$\mathcal{D}_n(\R^d):=\left\{\prod_{i=1}^d[\frac{k_i}{2^n},\frac{k_i+1}{2^n}): {(k_1,\ldots,k_d)}\in \Z^d\right\}.$$
Then $\mathcal{D}_n(\R^d)$ is a partition of $\R^d$.  For a set $A\subset \R^d$, we will use $N_{2^{-n}}(A)$ to count the number of elements of $\mathcal{D}_n(\R^d)$ intersecting $A$. 
For $\mu\in  \mathcal{P}(\R^d)$, the entropy dimension of $\mu$ is defined as 
$$\dim_e(\mu)=\lim_{n\to\infty}\frac{1}{n\log 2}H(\mu,\mathcal{D}_n(\R^d)),$$
if the limit exists; otherwise we consider the upper and lower entropy dimensions.
We will simply write $\mathcal{D}_n$ for $\mathcal{D}_n(\R^d)$ when no confusion can arise.

%Entropy dimension on $\R^d$ can be also defined via other partitions which are ``regular''. 
%For $0<\delta<1$,  a sequence $\{\mathcal{G}_n\}_{n\ge1}$ of partitions of $\R^d$ is called {\em $\delta$-regular} if there exists $C>1$ such that each element of $\mathcal{G}_n$ contains a ball of diameter $\delta^n/C$ and is contained in a ball of diameter $C\delta^n$.  For example, the dyadic partition sequence $\{\mathcal{D}_n(\R^d)\}_n$ is $1/2$-regular.

%The next result is an immediate consequence of Lemma \ref{lemma almost continuity of entropy}.
%\begin{lemma}\label{lemma entropy via regular partition 1}
%Let $\mu\in  \mathcal{P}(\R^d)$ and let $\{\mathcal{G}_n\}_{n\ge1}$  be a $\delta$-regular sequence of partitions. Then %the entropy dimension of $\mu$ can be calculated via the partitions $\mathcal{G}_n$:
%$$\underline{\dim}_e(\mu)=\liminf_{n\to\infty}\frac{1}{-n\log \delta}H(\mu,\mathcal{G}_n);$$
%and similarly for $\overline{\dim}_e(\mu)$ and $\dim_e(\mu)$.
%\end{lemma}
%\begin{proof}
%First, observe that by the regularities of the sequences $\{\mathcal{D}_n\}_n$ and $\{\mathcal{G}_n\}_n$,    there exists a constant $C=C(d,\delta)$ depending only on the dimension $d$ and  $\delta$ such that for any $k\in \N$, there exists  $k'\in \N$ such that the partitions $\mathcal{D}_k$ and $\mathcal{G}_{k'}$ are $C$-equivalent, and vice versa. Then we conclude the proof by using Lemma \ref{lemma almost continuity of entropy}.
%\end{proof}

The following lemma presents some relationships between different dimensions of a measure.

\begin{lemma}\label{lemma different dimensions of a measure 1}
Let $\mu$ be a measure on $\R^d$ or $\Lambda^\N$. Then 
$$\dim_*(\mu)\le \underline{\dim}_e(\mu)\le \overline{\dim}_e(\mu).$$
If $\mu$ is exact dimensional, then 
$$\dim_*(\mu)=\dim (\mu)= \underline{\dim}_e(\mu)= \overline{\dim}_e(\mu).$$
\end{lemma}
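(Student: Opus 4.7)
The inequality $\underline{\dim}_e(\mu)\le \overline{\dim}_e(\mu)$ is immediate from the definitions, so the real content is the first inequality and the collapse in the exact-dimensional case.

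For $\dim_*(\mu)\le \underline{\dim}_e(\mu)$, the plan is to reinterpret the entropy as an integral. Writing $\mathcal{D}_n(x)$ for the unique atom of $\mathcal{D}_n$ containing $x$ (and similarly in the symbolic case using the cylinder partition $\mathcal{F}_n$), one has
$$H(\mu,\mathcal{D}_n)=\int -\log\mu(\mathcal{D}_n(x))\,d\mu(x).$$
Since $\mathcal{D}_n(x)\subset B(x,c\cdot 2^{-n})$ for a dimensional constant $c$ (and in the symbolic case the cylinder is actually a ball of radius $\rho^n$), we have $-\log\mu(\mathcal{D}_n(x))\ge -\log\mu(B(x,c\cdot 2^{-n}))$. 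The integrands are nonnegative, so Fatou's lemma together with the observation that $\log(c\cdot 2^{-n})/(n\log 2)\to 1$ gives
$$\underline{\dim}_e(\mu)=\liminf_n \frac{H(\mu,\mathcal{D}_n)}{n\log 2}\ge \int \underline{D}(\mu,x)\,d\mu(x)\ge \operatorname*{essinf}_{x\sim\mu}\underline{D}(\mu,x)=\dim_*(\mu),$$
where the final equality is Lemma \ref{lemma charact of lower dim by local lower dim}(1).

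Now assume $\mu$ is exact dimensional of dimension $s$. Then $\underline{D}(\mu,\cdot)=\overline{D}(\mu,\cdot)=s$ $\mu$-a.e., so Lemma \ref{lemma charact of lower dim by local lower dim}(1) gives $\dim_*(\mu)=s$, and the first part of the lemma yields $\underline{\dim}_e(\mu)\ge s$. It remains only to show $\overline{\dim}_e(\mu)\le s$. Fix $\epsilon>0$. By an Egoroff-type argument applied to the family $\{r\mapsto \log\mu(B(x,r))/\log r\}_{r>0}$, there exist $r_0>0$ and a measurable set $E$ with $\mu(E)>1-\epsilon$ such that $\mu(B(x,r))\ge r^{s+\epsilon}$ for every $x\in E$ and $r\le r_0$. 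A standard $5r$-covering of $E$ by balls $B(y_i,2^{-n})$ with disjoint interiors forces $\sum_i \mu(B(y_i,2^{-n}))\le 1$, and hence at most $2^{n(s+\epsilon)}$ such balls; each of the inflated balls intersects only $O(1)$ dyadic cubes of generation $n$, so $N_{2^{-n}}(E)\le C\cdot 2^{n(s+\epsilon)}$ for $n$ large.

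Finally, decompose $\mu=\mu(E)\mu_E+\mu(E^c)\mu_{E^c}$ (normalized restrictions) and use concavity of entropy:
$$H(\mu,\mathcal{D}_n)\le \mu(E)\log N_{2^{-n}}(E)+\mu(E^c)\log N_{2^{-n}}({\rm supp}(\mu_{E^c}))+\log 2.$$
The first term is at most $n(s+\epsilon)\log 2+O(1)$, and the second is $O(n)$ (using compactness of $\mathrm{supp}(\mu)$ in the $\R^d$ case and the trivial bound $\sharp\Lambda^n$ in the symbolic case). Dividing by $n\log 2$, passing to $\limsup$, and letting $\epsilon\to 0$ (so that $\mu(E^c)\to 0$) yields $\overline{\dim}_e(\mu)\le s$. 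The main technical point to get right is the uniformization via Egoroff and the covering step; the rest is essentially bookkeeping.
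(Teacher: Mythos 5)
Your argument is correct. Note that the paper does not actually prove this lemma: it simply points to \cite{FanLauRao} for the Euclidean case, so your write-up is in effect a self-contained version of the standard proof that the citation delegates to (entropy as $\int-\log\mu(\mathcal{D}_n(x))\,d\mu(x)$ plus Fatou for the lower bound; Egoroff plus a Vitali-type counting bound and the entropy decomposition over $\{E,E^c\}$ for the upper bound in the exact-dimensional case). Two small points. First, the displayed inequality $H(\mu,\mathcal{D}_n)\le \mu(E)H(\mu_E,\mathcal{D}_n)+\mu(E^c)H(\mu_{E^c},\mathcal{D}_n)+\log 2$ is right, but it is not ``concavity of entropy'' (concavity gives the reverse direction); it is the standard conditioning bound $H(\mu,\mathcal{D}_n)\le H(\mu,\{E,E^c\})+H(\mu,\mathcal{D}_n\mid\{E,E^c\})$. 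Second, the bound on the $E^c$ term does require ${\rm supp}(\mu)$ to be compact (or at least $H(\mu,\mathcal{D}_0)<\infty$), which you correctly flag; this is harmless here since every measure to which the lemma is applied in the paper lives on a compact set.
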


\begin{proof}
Proofs for the Euclidean case can be found in \cite{FanLauRao}. The symbolic case is analogous.
\end{proof}

\subsubsection{Dimensions of product sets}
We recall the following dimension formula for dimensions of product sets.

\begin{lemma}[Theorem 8.10 of \cite{MattilaBook1}]\label{lemma Dimensions of product sets}
Let $E,F\subset\R^d$ be non-empty  Borel sets. Then 
$$\dim_{\rm H}E+\dim_{\rm H}F\le \dim_{\rm H}(E\times F)\le\overline{\dim}_{\rm B}(E\times F)\le  \overline{\dim}_{\rm B}E+\overline{\dim}_{\rm B}F.$$
\end{lemma}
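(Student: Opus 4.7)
The plan is to establish the three inequalities in the chain separately. The middle inequality $\dim_{\rm H}(E\times F)\le\overline{\dim}_{\rm B}(E\times F)$ is just the standard general comparison between Hausdorff and upper box-counting dimensions, which is valid on any totally bounded set (and in particular on any bounded Borel set; the general case reduces to the bounded case by writing $E\times F$ as a countable union of its intersections with balls).

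For the right-most inequality, I would argue by a direct product covering. Fix $\epsilon>0$. By the definition of upper box-counting dimension, for all sufficiently small $\delta>0$ there exist coverings of $E$ and $F$ by at most $\delta^{-\overline{\dim}_{\rm B}E-\epsilon}$ and $\delta^{-\overline{\dim}_{\rm B}F-\epsilon}$ sets of diameter at most $\delta$, respectively. Taking all products of one set from each covering yields a covering of $E\times F$ by at most $\delta^{-\overline{\dim}_{\rm B}E-\overline{\dim}_{\rm B}F-2\epsilon}$ subsets of $\R^{2d}$, each of diameter at most $\sqrt{2}\,\delta$. Sending $\delta\to 0$ and then $\epsilon\to 0$ yields the desired inequality.

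For the left-most inequality, which is the substantive one, I would use Frostman's lemma together with the mass distribution principle. Fix $s<\dim_{\rm H}E$ and $t<\dim_{\rm H}F$. Since $E$ and $F$ are Borel, Frostman's lemma produces non-zero finite Borel measures $\mu$ and $\nu$ supported on compact subsets of $E$ and $F$ respectively, together with a constant $C>0$, such that
$$\mu(B(x,r))\le C r^s\quad\text{and}\quad \nu(B(y,r))\le C r^t\quad\text{for all }x,y\in\R^d,\ r>0.$$
The product measure $\mu\times\nu$ is then supported on a compact subset of $E\times F$, and since $B((x,y),r)\subset B(x,r)\times B(y,r)$ for every $(x,y)\in\R^{2d}$ and every $r>0$, we obtain
$$(\mu\times\nu)\bigl(B((x,y),r)\bigr)\le \mu(B(x,r))\,\nu(B(y,r))\le C^2 r^{s+t}.$$
The mass distribution principle then gives $\dim_{\rm H}(E\times F)\ge s+t$, and letting $s\uparrow \dim_{\rm H}E$ and $t\uparrow \dim_{\rm H}F$ finishes the argument.

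There is no serious obstacle here; the statement is classical and we are essentially retracing the proof in Mattila's book. The only conceptually delicate step is the appeal to Frostman's lemma, which is precisely why the Borel hypothesis is imposed on $E$ and $F$ (analytic sets would in fact suffice). The remaining technical nuisances—passing to the supremum over $s<\dim_{\rm H}E$ and $t<\dim_{\rm H}F$, and keeping track of the $\sqrt{2}$ factor coming from the Euclidean metric on $\R^{2d}$ versus box metrics—are routine.
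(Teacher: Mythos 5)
Your proof is correct and is precisely the classical argument (product coverings for the box-dimension bound, Frostman's lemma plus the mass distribution principle for the Hausdorff lower bound) found in the cited reference; the paper itself offers no proof, simply quoting Theorem 8.10 of Mattila's book. The only point worth noting is the degenerate case $\dim_{\rm H}E=0$, where one takes $\mu=\delta_{x_0}$ for some $x_0\in E$ instead of a Frostman measure, which your argument absorbs with no change.
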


\subsection{Dynamical systems} \label{preliminary on DS 1}

In this subsection, we collect some basic notions and properties of dynamical systems. We refer the reader to \cite{Waltersbook, Downarowicz2011book,EWbook} for more information.

\subsubsection{Measure preserving dynamical system} \label{subsection preliminary on DS 2}
 By a {\em Measure preserving dynamical system} (or {\em dynamical system} for short) we mean a quadruple $(X,\mathcal{B},T,\mu)$ where $X$ is a compact metric  space, $\mathcal{B}$ is the Borel $\sigma$-algebra on $X$, $T:X\to X$ is a Borel map and $\mu$ is a $T$-invariant measure. We shall often omit $\mathcal{B}$ in  our notation and abbreviate the system to $(X,T,\mu)$. 
 
 A dynamical system is {\em ergodic} if the only invariant sets are trivial, i.e., if $\mu(A\Delta T^{-1}A)=0$, then $\mu(A)=0 $ or $\mu(A)=1$. By the {\em ergodic decomposition theorem}, every $T$-invariant measure $\mu$ can be decomposed as mixtures of $T$-invariant ergodic measures:   {$\mu=\int \mu^{(x)} d\mu(x)$, where for $\mu$-a.e. $x$, $\mu^{(x)}$ is a $T$-invariant and ergodic measure, called an {\em ergodic component} of $\mu$.  We refer the reader to \cite[Chapter 4.2]{EWbook} for more information.}
 
%there is a map $X\to\mathcal{P}(X)$, denoted by $x\mapsto \mu^{(x)}$, such that (i) it is measurable with respect to the
%sub-$\sigma$-algebra $\mathcal{I}$ of $T$-invariant sets, (ii) $\mu=\int \mu^{(x)} d\mu(x)$, (iii) $\mu$-a.e. $\mu^{(x)}$ is $T$-invariant, ergodic  and supported on the atom of $\mathcal{I}$ containing $x$. The measure $\mu^{(x)}$ is called the {\em ergodic component} of $x$.

Another important notion in ergodic theory is {\em weak-mixing}. For the precise definition of weak-mixing and its many equivalent formulations, see \cite{Waltersbook, EWbook}. We will make use the following characterization of weak-mixing (see\cite[Theorem 2.36]{EWbook}): a dynamical system $(X,T,\mu)$  is weakly mixing if and only if, for any ergodic dynamical system $ (Y, S, \nu)$, the product system $ (X\times Y,T\times S,\mu \otimes \nu)$ is also ergodic.

An important class of dynamical systems that we will have occasion to use are {\em symbolic dynamical systems}, in which $X$ is the symbolic space $\Lambda^\N$ and $T$ is the shift transformation $\sigma$, and  $\mu$ is a  shift-invariant measure. In the case when $\mu$ is a {product} measure determined by a probability vector $p=(p_i)_{i\in \Lambda}$ on $\Lambda$, we call $(\Lambda^\N,\sigma,\mu)$ a {\em {Bernoulli shift}}.

A dynamical system $(Y,S,\nu)$ is a {\em factor} of $(X,T,\mu)$ if there exists a measurable map $\pi: X\to Y$, called the factor map, which is {\em equivariant}, i.e., $\pi\circ T=S\circ \pi$ and $\pi\mu=\nu$.

Let $(X,T,\mu)$ be a dynamical system.  A point $x\in X$ is {\em generic} for $\mu$ if
$$\frac{1}{N}\sum_{n=0}^{N-1}\delta_{T^nx}\to  \mu \ \ {\rm as}\ N\to\infty$$
in the weak-* topology.   It follows from the ergodic theorem that if $\mu$ is ergodic then $\mu$-a.e. $x$ is generic for $\mu$.

\subsubsection{Measure-theoretic entropy}\label{subsubsection Measure  entropy1}
The {\em measure-theoretic entropy} of a dynamical system $(X,T,\mu)$ will be denoted by $h(\mu,T)$.  We refer the reader to \cite{Waltersbook, Downarowicz2011book} for precise definition of entropy and related material.

For a finite measurable partition $\mathcal{A}$ of $X$, we write $\mathcal{A}_n=\bigvee_{k=0}^{n-1}T^{-k}\mathcal{A}$ for the coarsest common refinement of $\mathcal{A},T^{-1}\mathcal{A},\cdots, T^{-(n-1)}\mathcal{A}$. We call $\{\mathcal{A}_n\}_{n\ge1}$ the filtration generated by $\mathcal{A}$ with respect to $T$. For each $n\ge 1$ and $x\in X$, $\mathcal{A}_n(x)$ is the unique element of $\mathcal{A}_n$ containing $x$. We use $\mathcal{A}_{\infty}=\bigvee_{k=0}^{\infty}T^{-k}\mathcal{A}$ to denote the $\sigma$-algebra generated by the partitions $\mathcal{A}_n$, $n\ge 1$. We say that $\mathcal{A}$ is a {\em generator}  for $T$ if $\mathcal{A}_{\infty}$ is the full Borel $\sigma$-algebra. 

%By the Kolmogorov-Sinai theorem, if $\mathcal{A}$ is a generator then the entropy $h(\mu,T)$ can be calculated via the %partitions $\{\mathcal{A}_n\}$: 
%$$h(\mu,T)=\lim_{n\to\infty}\frac{1}{n}H(\mu,\mathcal{A}_n).$$

%\subsection{IFS} Representation of homogeneous self-similar sets by series(maybe not necessary),

\medskip

\section{CP-Processes}\label{section CP-process}

\subsection{General theory}

The CP-process theory was pioneered by Furstenberg in \cite{Furstenberg69},  initially as a tool to investigate Conjecture \ref{Intersection conjecture}.  Recently, a more systematic study of CP-processes was initiated by Furstenberg \cite{Furstenberg2008}, with further developments by Gavish \cite{Gavish}, Hochman \cite{Hochman2010}, Hochman and Shmerkin \cite{HS2012} and others. Let us first recall some basic concepts related to this theory in the symbolic setting.

%The CP-process was introduced by Furstenberg, first {\color{red}pioneered} in \cite{Furstenberg69}, initially as a tool to study Conjecture \ref{Intersection conjecture}, and then {\color{red}introduced} formally in \cite{Furstenberg2008}. It has been further developed by  Gavish \cite{Gavish}, Hochman \cite{Hochman2010}, Hochman and Shmerkin \cite{HS2012} and others.

Recall that  $\mathcal{P}(X)$ is the set of all Borel probability measures on a metric space $X$. A {\em distribution} is a Borel probability measure on $\mathcal{P}(X)$ (or even larger spaces). Notice that distributions are measures on space of measures. 

Fix a finite alphabet  $\Lambda$. For $0<\rho<1$, consider the symbolic space $\Lambda^\N$ endowed with the metric defined as \eqref{eq: def distance simbolic 1}.
 Let 
$$\Omega=\left\{(\mu,x)\in \mathcal{P}(\Lambda^\N)\times \Lambda^\N: x\in {\rm supp}(\mu)\right\}.$$

The CP-process theory studies the dynamical properties under the action of magnification of measures.

\begin{definition}[Magnification dynamics]\label{definition magnification operator}
We define the {\em magnification operator} $M:\Omega\to\Omega$ as 
$$M(\mu,x)=(\mu^{[x_1]},\sigma(x)),$$
where $\mu^{[x_1]}=\sigma(\mu|_{[x_1]})/\mu([x_1])$. 
\end{definition}

It is clear that $M(\Omega)\subset \Omega$ and $M$ is continuous.
For any distribution $P$ on $\Omega$ (i.e., $P\in \mathcal{P}(\Omega)$), we denote by $P_1$ its marginal on the measure coordinate. 
\begin{definition}[Adaptedness]\label{definition Adaptedness}
A distribution $P$ on $\Omega$ is called {\em adapted} if for every $f\in C(\mathcal{P}(\Lambda^\N)\times \Lambda^\N)$,
$$
\int f(\mu,x)dP(\mu,x)=\int \left(\int f(\mu,x)d\mu(x)\right) d P_1 (\mu).
$$
\end{definition} 

In other words, $P$ is adapted if, conditioned on the measure component being  $\mu$, the point component  $x$ is distributed according to $\mu$. 
In particular, if a property holds for $P$-a.e. $(\mu,x)$  and  $P$ is adapted, then this property holds for $P_1$-a.e. $\mu$ and $\mu$-a.e. $x$.

\begin{definition}[CP-distribution]
A distribution $P$ on $\Omega$ is a {\em CP-distribution} if it is $M$-invariant and adapted. In this case, we call the system $(\Omega,P,M)$ a CP-process.
\end{definition} 

A CP-distribution $P$ is {\em ergodic} if  the  measure preserving system $(\Omega,P,M)$ is ergodic in the usual sense. If it is not ergodic, then we can consider its ergodic decomposition.

\begin{proposition}\label{prop ergodic decomposition}
The ergodic components of a CP-distribution are adapted, in particular, they are ergodic CP-distributions.
\end{proposition}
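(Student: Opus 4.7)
My plan is to verify that the ergodic components $P^{(\omega)}$ of $P$, which are automatically $M$-invariant and ergodic by the ergodic decomposition theorem, are additionally adapted. Fix a countable algebra $\mathcal{A}$ of clopen subsets of $\mathcal{P}(\Lambda^\N)$ generating its Borel $\sigma$-algebra (possible since $\mathcal{P}(\Lambda^\N)$ is compact metrizable); the cylinders $[u]$ with $u\in\Lambda^\ast$ are clopen in $\Lambda^\N$. By Stone--Weierstrass the linear span of $\{\mathbf{1}_H(\mu)\mathbf{1}_{[u]}(x): H\in\mathcal{A},\ u\in\Lambda^\ast\}$ is dense in $C(\mathcal{P}(\Lambda^\N)\times\Lambda^\N)$, so it suffices to establish, for each such $(H,u)$, that $\int \phi_{H,u}\,dP^{(\omega)}=0$ for $P$-a.e.\ $\omega$, where
$$\phi_{H,u}(\mu,x):=\mathbf{1}_H(\mu)\bigl(\mathbf{1}_{[u]}(x)-\mu([u])\bigr).$$
Since $\omega\mapsto\int\phi_{H,u}\,dP^{(\omega)}$ is a version of the conditional expectation $E_P[\phi_{H,u}\mid\mathcal{I}]$, with $\mathcal{I}$ the $\sigma$-algebra of $M$-invariant sets, the whole proposition reduces to showing $E_P[\phi_{H,u}\mid\mathcal{I}]=0$ $P$-a.e.\ for each test $(H,u)$ in the countable collection.

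The heart of the argument is a quasi-martingale orthogonality on $\Omega$. Let $\mathcal{H}_n$ denote the $\sigma$-algebra generated by $(\mu,x)\mapsto(\mu,x_1,\ldots,x_n)$, and write $\mu_n:=\mu^{[x_1\cdots x_n]}=\sigma^n(\mu|_{[x_1\cdots x_n]})/\mu([x_1\cdots x_n])$. Adaptedness of $P$ together with the definition of $\mu^{[u]}$ shows that under $P$ the conditional law of $\sigma^n x$ given $\mathcal{H}_n$ is exactly $\mu_n$: conditionally on $\mu$ one has $x\sim\mu$, so conditionally on $(\mu,x_1,\ldots,x_n)$ the tail $\sigma^n x$ is distributed according to the normalized restriction $\mu^{[x_1\cdots x_n]}$. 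Since
$$\phi_{H,u}\circ M^n(\mu,x)=\mathbf{1}_H(\mu_n)\bigl(\mathbf{1}_{[u]}(\sigma^n x)-\mu_n([u])\bigr),$$
and $\mathbf{1}_H(\mu_n)$ together with $\mu_n([u])$ are $\mathcal{H}_n$-measurable, taking conditional expectations gives $E_P[\phi_{H,u}\circ M^n\mid\mathcal{H}_n]=0$. Moreover $\phi_{H,u}\circ M^n$ is $\mathcal{H}_{n+|u|}$-measurable, so for $m-n>|u|$ the tower property yields $E_P[(\phi_{H,u}\circ M^n)(\phi_{H,u}\circ M^m)]=0$.

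Consequently only $O(|u|N)$ of the $N^2$ cross terms in $\bigl\|\tfrac{1}{N}\sum_{n=0}^{N-1}\phi_{H,u}\circ M^n\bigr\|_{L^2(P)}^2$ are nonzero, each bounded by $\|\phi_{H,u}\|_\infty^2\le 1$, so this squared norm is $O(|u|/N)\to 0$. By the von Neumann/Birkhoff ergodic theorem these same Ces\`aro averages converge in $L^2(P)$ and $P$-a.s.\ to $E_P[\phi_{H,u}\mid\mathcal{I}]$, forcing $E_P[\phi_{H,u}\mid\mathcal{I}]=0$ $P$-a.e. Intersecting the countably many full-measure sets over $(H,u)\in\mathcal{A}\times\Lambda^\ast$ yields a single $P$-full-measure set on which every test for adaptedness is passed, so $P^{(\omega)}$ is an adapted, $M$-invariant, ergodic distribution, i.e.\ an ergodic CP-distribution. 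The principal obstacle is precisely the identification of the conditional law of $\sigma^n x$ given $\mathcal{H}_n$ from adaptedness of $P$; once this is in hand, the $L^2$-orthogonality and the appeal to Birkhoff are standard.
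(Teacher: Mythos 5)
Your overall strategy is sound and is essentially the standard argument (the paper itself gives no proof of this proposition, deferring to Furstenberg, Hochman and Shmerkin, whose arguments are in the same spirit): reduce adaptedness of the components to the vanishing of $E_P[\phi\mid\mathcal I]$ for a countable total family of test functions $\phi$, identify $\int\phi\,dP^{(\omega)}$ with that conditional expectation via the barycentre property of the ergodic decomposition, and kill the conditional expectation by showing the ergodic averages of $\phi$ tend to $0$ in $L^2(P)$ using the quasi-orthogonality coming from $E_P[\phi\circ M^n\mid\mathcal H_n]=0$. Your identification of the conditional law of $\sigma^n x$ given $\mathcal H_n$ as $\mu^{[x_1^n]}$ is correct and is indeed the crux; the orthogonality count $O(|u|N)$ and the appeal to the mean ergodic theorem are fine.

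There is, however, one false step in the setup: $\mathcal P(\Lambda^\N)$ with the weak-* topology is convex, hence connected, so its only clopen subsets are $\emptyset$ and the whole space. There is no countable algebra of clopen sets generating its Borel $\sigma$-algebra, and with only trivial $H$ the family $\{\mathbf 1_H(\mu)\mathbf 1_{[u]}(x)\}$ spans far too little in $C(\mathcal P(\Lambda^\N)\times\Lambda^\N)$ to detect adaptedness (it would only control the marginal of $P^{(\omega)}$ on the point coordinate). The repair is routine: replace $\mathbf 1_H(\mu)$ by $g(\mu)$ with $g$ ranging over a countable dense subset of $C(\mathcal P(\Lambda^\N))$, or over finite products of the maps $\mu\mapsto\mu([v])$, $v\in\Lambda^*$, which separate points; Stone--Weierstrass then legitimately gives density of the span of $g(\mu)\mathbf 1_{[u]}(x)$. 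Every subsequent step goes through verbatim, since all that is used about the factor $\mathbf 1_H(\mu_n)$ is that it is bounded and $\mathcal H_n$-measurable. A minor further point: when computing $E_P[\mathbf 1_{[u]}(\sigma^n x)\mid\mathcal H_n]$ you apply the adaptedness identity to bounded Borel, not merely continuous, integrands; this extension follows from the continuous case by a monotone class argument and should be noted. With these substitutions the proof is complete.
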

A proof of this result is indicated in the remark following Proposition 5.1 of \cite{Furstenberg2008}. See also \cite[Proposition 22]{Shmerkin2011} and \cite[Theorem 1.3]{Hochman2010} for alternative proofs.

\subsection{Dimension and generic properties of CP-processes}\label{subsection Dimension of CP-processes 11}
In this subsection, we list some useful properties of CP-processes that we will use later.  The first one concerns dimension information of typical measures for  ergodic CP-distributions.

\begin{proposition}[Theorem 2.1 of \cite{Furstenberg2008}] \label{proposition exact dim and formula}

Let $P$ be an ergodic CP-distribution. Then $P_1$-almost every measure $\mu$ is exact dimensional with dimension
$$\dim\mu =\frac{1}{\log \rho^{-1}}\int -\log \nu[x_1]dP(\nu,x)=\frac{1}{\log \rho^{-1}}\int \sum_{i\in \Lambda}-\nu[i]\log \nu[i]dP_1(\nu).$$
\end{proposition}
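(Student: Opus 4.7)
The plan is to recognize the local dimension of $\mu$ as a Birkhoff average under the magnification dynamics $M$. In the metric $d_\rho$ on $\Lambda^\N$, the ball $B(x,\rho^n)$ coincides (up to a harmless index shift) with the cylinder $[x_1^n]$, and since consecutive scales $\rho^n,\rho^{n+1}$ are geometrically related, monotonicity of $r\mapsto \mu(B(x,r))$ reduces the existence of the local dimension to the existence of
\[ \lim_{n\to\infty}\frac{\log \mu([x_1^n])}{n\log\rho}. \]
Thus it suffices to show that this limit exists and equals a fixed constant for $P_1$-a.e.\ $\mu$ and $\mu$-a.e.\ $x$.

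The key identity is a telescoping formula relating $-\log\mu([x_1^n])$ to an ergodic sum. Define $f:\Omega\to\R$ by $f(\nu,y):=-\log \nu[y_1]$. Unwinding the definition of $M$ gives $(M^k(\mu,x))_1[y] = \mu([x_1^k y])/\mu([x_1^k])$ for $y\in\Lambda$ and $(M^k(\mu,x))_2 = \sigma^k x$, so $f(M^k(\mu,x)) = -\log \mu([x_1^{k+1}])/\mu([x_1^k])$. Summing telescopes, using $\mu(\Lambda^\N)=1$, to
\[ \sum_{k=0}^{n-1} f\circ M^k(\mu,x) = -\log \mu([x_1^n]). \]
Moreover $f\in L^1(P)$: by adaptedness,
\[ \int f\,dP = \int \sum_{i\in\Lambda} -\nu[i]\log \nu[i]\,dP_1(\nu) \le \log\sharp\Lambda < \infty. \]

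Since $P$ is ergodic for $M$, Birkhoff's theorem applied to $f$ gives, for $P$-a.e.\ $(\mu,x)$,
\[ \frac{1}{n}\sum_{k=0}^{n-1} f\circ M^k(\mu,x) \longrightarrow \int f\,dP. \]
Combined with the telescoping and divided by $\log\rho^{-1}$, this yields $\log \mu([x_1^n])/(n\log\rho) \to (\log\rho^{-1})^{-1}\int f\,dP$, a fixed constant. By adaptedness this convergence holds for $P_1$-a.e.\ $\mu$ and $\mu$-a.e.\ $x$, so each such $\mu$ is exact dimensional with dimension equal to the claimed constant; the second equality in the statement of the proposition is exactly the adaptedness computation already used for $L^1$-integrability. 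The only mildly subtle point is the metric-to-cylinder reduction in the first paragraph, which is purely geometric and follows from $\rho^n$ being a geometric scale; the heart of the proof is just ``Birkhoff on the space of measure-point pairs,'' which is precisely the conceptual content of the CP formalism.
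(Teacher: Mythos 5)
Your proof is correct and complete. The paper does not actually prove this proposition --- it is quoted from Theorem 2.1 of Furstenberg's 2008 paper --- but your argument (telescoping $-\log\mu([x_1^n])$ into a Birkhoff sum of the information function $f(\nu,y)=-\log\nu[y_1]$ along the magnification dynamics, checking $f\in L^1(P)$ via adaptedness, and then invoking ergodicity plus adaptedness again to pass from $P$-a.e.\ $(\mu,x)$ to $P_1$-a.e.\ $\mu$ and $\mu$-a.e.\ $x$) is exactly the standard proof in the symbolic setting, where the ultrametric $d_\rho$ makes balls coincide with cylinders so the ball-to-cylinder reduction you flag as the ``only mildly subtle point'' is indeed harmless.
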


For an ergodic CP-distribution $P$, we denote by $\dim P$ the almost sure dimension of $\mu$ for a $P$-typical $\mu$.

We will use several times the following lemma which is an immediate consequence of the ergodic theorem and the adaptedness property of CP-processes. We denote 
\begin{equation}\label{eq: definition rescaled measure symbolic 1}
\mu^{[x_1^n]}=\sigma^n(\mu|_{[x_1^n]})/\mu([x_1^n]).
\end{equation}

\begin{lemma} \label{lemma generating CP-distribution}
Let $P$ be an ergodic CP-distribution. Then $P_1$-a.e. $\mu$ generates $P_1$ in the  sense that  for $\mu$-a.e. $x$, we have
\begin{equation}\label{lemma generating CP-distribution 1}
\frac{1}{N}\sum_{n=0}^{N-1}\delta_{\mu^{[x_1^n]}} \to P_1 \ \ \textrm{ weak-* as } N\to \infty.
\end{equation}
\end{lemma}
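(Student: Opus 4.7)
The plan is to get the convergence directly from the Birkhoff ergodic theorem applied to the CP-process $(\Omega,M,P)$, and then to push the resulting $P$-a.e. statement down to a ``for $P_1$-a.e.\ $\mu$, for $\mu$-a.e.\ $x$'' statement via the adaptedness hypothesis.

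\textbf{Step 1 (iteration of $M$).} First I would verify by induction on $n$ that
\[
M^n(\mu,x)=\bigl(\mu^{[x_1^n]},\sigma^n x\bigr),
\]
so that the averages along the $M$-orbit of $(\mu,x)$ on the measure coordinate are precisely the averages $\tfrac{1}{N}\sum_{n=0}^{N-1}\delta_{\mu^{[x_1^n]}}$ that we want to control.

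\textbf{Step 2 (Birkhoff for continuous test functions on the measure coordinate).} Fix any $g\in C(\mathcal{P}(\Lambda^{\mathbb N}))$ and consider $f(\nu,y):=g(\nu)$, which is continuous on $\Omega$. Applying the Birkhoff ergodic theorem to the ergodic system $(\Omega,M,P)$ gives, for $P$-a.e.\ $(\mu,x)$,
\[
\frac{1}{N}\sum_{n=0}^{N-1}g\bigl(\mu^{[x_1^n]}\bigr)=\frac{1}{N}\sum_{n=0}^{N-1}f\bigl(M^n(\mu,x)\bigr)\longrightarrow \int f\,dP=\int g\,dP_1.
\]

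\textbf{Step 3 (separability upgrade).} Since $\mathcal{P}(\Lambda^{\mathbb N})$ is compact metrizable, $C(\mathcal{P}(\Lambda^{\mathbb N}))$ is separable; pick a countable dense subset $\{g_k\}_{k\ge1}$. Taking a countable intersection of the full-measure sets from Step 2, we obtain a single set $\Omega_0\subset\Omega$ with $P(\Omega_0)=1$ on which the above convergence holds simultaneously for every $g_k$. By a standard $3\varepsilon$-argument this is equivalent to weak-$*$ convergence of the empirical measures on $\mathcal{P}(\Lambda^{\mathbb N})$, so
\[
\frac{1}{N}\sum_{n=0}^{N-1}\delta_{\mu^{[x_1^n]}}\xrightarrow[N\to\infty]{\text{weak-}*} P_1 \qquad \text{for every }(\mu,x)\in\Omega_0.
\]

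\textbf{Step 4 (adaptedness).} Finally I would invoke adaptedness of $P$: the defining integral identity says that the disintegration of $P$ over $P_1$ along the measure coordinate is given by $\mu$ itself. Consequently, a Borel set $\Omega_0\subset\Omega$ has full $P$-measure if and only if for $P_1$-a.e.\ $\mu$ the fiber $\{x:(\mu,x)\in\Omega_0\}$ has full $\mu$-measure. Applied to the $\Omega_0$ of Step 3, this yields exactly the conclusion of the lemma.

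The only potentially delicate point is the separability/metrizability upgrade in Step 3, which is routine since $\Lambda^{\mathbb N}$ is a compact metric space; the rest of the argument is essentially Birkhoff's theorem combined with the defining property of adaptedness, so I do not anticipate a real obstacle.
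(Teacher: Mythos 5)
Your proof is correct and follows exactly the route the paper takes: the paper presents this lemma as "an immediate consequence of the ergodic theorem and the adaptedness property of CP-processes," which is precisely your Steps 1--4 (iterate $M$, apply Birkhoff to continuous test functions depending only on the measure coordinate, upgrade to weak-$*$ convergence by separability, and transfer the $P$-a.e.\ statement to a $P_1$-a.e.\ $\mu$, $\mu$-a.e.\ $x$ statement via adaptedness). No gaps.
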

For a measure $\mu$ which  generates $P_1$ in the above sense, we say $\mu$ is generic for   $P_1$.
As a corollary of Proposition \ref{proposition exact dim and formula} and Lemma \ref{lemma generating CP-distribution}, we obtain the following easy but useful  properties concerning typical measures of CP-distributions with positive dimension. Similar results have appeared in \cite{HS2015}.

\begin{proposition} \label{prop non-concentration and entropy}
Let $P$ be an ergodic CP-distribution with $\dim P=h>0$. For any $\epsilon >0$, there exists $n_0(\epsilon)\in \N$ such that for each $\mu$ which is generic for  $P_1$ and for $\mu$-a.e. $x$, 
\begin{equation}\label{eq: prop non-concentration 1}
\liminf_{N\to\infty}\frac{1}{N}\sharp \left\{1\le k\le N : \max_{u\in \Lambda^{n_0(\epsilon)}}\mu^{[x_1^k]}([u])\le \epsilon\right\}>1-\epsilon
\end{equation}
and 
\begin{equation}\label{eq: prop  large scale entropy 1}
\liminf_{N\to\infty}\frac{1}{N}\sharp \left\{1\le k\le N :  H(\mu^{[x_1^k]},\mathcal{F}_n)\ge n(h\log \rho^{-1}-\epsilon)\right\}>1-\epsilon \ \textrm{ for all  } n\ge n_0(\epsilon).
\end{equation}
In particular, for $P_1$-a.e. $\mu$ and $\mu$-a.e. $x$, the above properties hold. 
\end{proposition}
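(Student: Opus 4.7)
The plan is to combine Lemma \ref{lemma generating CP-distribution}---for any $\mu$ generic for $P_1$, $\tfrac{1}{N}\sum_{k=0}^{N-1}\delta_{\mu^{[x_1^k]}}\to P_1$ weak-$*$ for $\mu$-a.e.\ $x$---with the dimension and entropy information for $P_1$-typical measures supplied by Proposition \ref{proposition exact dim and formula} and Lemma \ref{lemma different dimensions of a measure 1}. In both parts, the target condition can be recast as ``$\mu^{[x_1^k]}\in U$'' for a suitable \emph{open} set $U\subset\mathcal{P}(\Lambda^\N)$ of large $P_1$-measure, so that the desired lower bound on the $\liminf$ of frequencies follows immediately from the Portmanteau theorem.

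For \eqref{eq: prop non-concentration 1}, first observe that since $\dim P=h>0$, Lemma \ref{lemma different dimensions of a measure 1} gives $\dim_*(\nu)=h>0$ for $P_1$-a.e.\ $\nu$, so such $\nu$ is non-atomic. A short compactness argument then yields $f_n(\nu):=\max_{u\in\Lambda^n}\nu([u])\to 0$: if $\nu([u_n])\ge\eta$ held for each $n$, any accumulation point $x$ of points $x_n\in[u_n]$ would satisfy $\nu(\{x\})\ge\eta$, contradicting non-atomicity. Since each $f_n$ is continuous on $\mathcal{P}(\Lambda^\N)$ (it is the maximum of finitely many continuous maps $\nu\mapsto\nu([u])$, the cylinders $[u]$ being clopen), one can choose $n_1=n_1(\epsilon)$ with $P_1(\{f_{n_1}<\epsilon\})>1-\epsilon/2$; Portmanteau applied to the open set $\{f_{n_1}<\epsilon\}$ then delivers \eqref{eq: prop non-concentration 1}.

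For \eqref{eq: prop  large scale entropy 1}, Proposition \ref{proposition exact dim and formula} together with Lemma \ref{lemma different dimensions of a measure 1} yields $H(\nu,\mathcal{F}_n)/(n\log\rho^{-1})\to h$ for $P_1$-a.e.\ $\nu$. As this ratio is uniformly bounded by $\log|\Lambda|/\log\rho^{-1}$, Egorov's theorem produces a set $K\subset\mathcal{P}(\Lambda^\N)$ with $P_1(K)>1-\epsilon/2$ and an integer $n_2=n_2(\epsilon)$ such that $H(\nu,\mathcal{F}_n)>n(h\log\rho^{-1}-\epsilon/2)$ for every $\nu\in K$ and every $n\ge n_2$. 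Since $\nu\mapsto H(\nu,\mathcal{F}_n)$ is continuous (as a finite sum of continuous functions $-\nu([u])\log\nu([u])$), the set $U_n:=\{\nu: H(\nu,\mathcal{F}_n)>n(h\log\rho^{-1}-\epsilon/2)\}$ is open with $P_1(U_n)>1-\epsilon/2$ for all $n\ge n_2$, and a second Portmanteau application gives \eqref{eq: prop  large scale entropy 1}. Setting $n_0(\epsilon):=\max\{n_1,n_2\}$ completes the argument.

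The only technical subtlety, and thus what I would flag as the ``hard part,'' is that weak-$*$ convergence lower-bounds $\liminf$ frequencies only on \emph{open} sets; this forces the target sets to be defined by strict inequalities, and the resulting loss is absorbed by working with $\epsilon/2$ throughout. Apart from this bookkeeping, the proof is a direct combination of the generic-measure description in Lemma \ref{lemma generating CP-distribution}, the exact dimensionality of $P_1$-typical measures, and elementary continuity properties of the functionals $f_n$ and $H(\cdot,\mathcal{F}_n)$.
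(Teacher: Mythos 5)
Your proposal is correct and follows essentially the same route as the paper: both arguments derive from exact dimensionality of $P_1$-typical measures that they are non-atomic with entropy $H(\nu,\mathcal{F}_n)\sim nh\log\rho^{-1}$, upgrade the pointwise statements to a uniform $n_0(\epsilon)$ on a set of $P_1$-measure $>1-\epsilon$ via an Egorov-type selection, and then apply the Portmanteau lower bound for open sets to the weak-$*$ convergence $\frac{1}{N}\sum_{k}\delta_{\mu^{[x_1^k]}}\to P_1$ furnished by genericity. The only cosmetic difference is that the paper treats both conditions with a single open set $E_\epsilon^n$ rather than splitting $\epsilon$ in two.
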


\begin{proof}
The proof is similar to that of  \cite[Lemma 4.11]{HS2015}. Fix any $\epsilon >0$. By Proposition \ref{proposition exact dim and formula}, $P_1$-a.e. $\nu$ is exact dimensional with dimension $h>0$, so $\nu$ is non-atomic and using Lemma \ref{lemma different dimensions of a measure 1} we have
$$\lim_{n\to\infty}\frac{1}{n}H(\nu,\mathcal{F}_n)=h\log\rho^{-1}.$$ 
Thus for $P_1$-a.e. $\nu$, there exists a finite integer $n(\nu)$ such that for each $n\ge n(\nu)$,
\begin{equation}\label{eq: prop non-concentration and entropy}
\max_{u\in \Lambda^n}\nu([u])< \epsilon \ \ \textrm{and}\ \ H(\nu,\mathcal{F}_n)> n(h\log \rho^{-1}-\epsilon).
\end{equation}
It follows that there exist a set $E_\epsilon$ of measures with $P_1(E_\epsilon)>1-\epsilon$ and a finite $n_0(\epsilon)\in \N$ such that $n_0(\epsilon)\ge n(\nu)$ for $\nu\in E_\epsilon$. For any $n\ge n_0(\epsilon)$, let $E_\epsilon^n$ be the set of measures $\nu$ such that \eqref{eq: prop non-concentration and entropy} holds. Then  $E_\epsilon\subset E_\epsilon^n$ and $E_\epsilon^n$ is open. Since  $\mu$ generates $P_1$, we have, for $\mu$-a.e. $x$,
$$\liminf_{N\to\infty}\frac{1}{N}\sum_{k=0}^{N-1}\delta_{\mu^{[x_1^k]}}(E_\epsilon^n)\ge P_1(E_\epsilon^n)>1-\epsilon.$$
The above statement holds for each $n\ge n_0(\epsilon)$, which is what we wanted to show.
\end{proof}

\begin{remark}\label{remark prop non-concentration and entropy}
In the above proof,  we saw  that the properties \eqref{eq: prop non-concentration 1} and \eqref{eq: prop  large scale entropy 1} hold for each pair $(\mu,x)$ satisfying \eqref{lemma generating CP-distribution 1}.
\end{remark}

%{\color{red}Maybe Not Needed: In rest of the paper, we shall often notationally identify  $P$ with its measure marginal $\widetilde{P}$; the meaning should be clear from the context.}
\smallskip

\section{Constructions of CP-distributions based  on  $K=C_\alpha\times C_\beta$} \label{section Constructions of CP-distributions on K} 

Let us first recall the sets  $C_\alpha$ and $C_\beta$ and some preliminary results about them.
Fix two real numbers $0<\beta<\alpha<1$ such that $\theta=\log\alpha/\log \beta$ is irrational. Let $\Phi=\{\phi_i(x)=\alpha x +\lambda_i^\alpha\}_{i=1}^m$ and $\Psi=\{\psi_j(x)=\beta x +\lambda_i^\beta\}_{j=1}^l$ be two regular self-similar IFSs on $\R$. Let $C_\alpha$ be the attractor of $\Phi$ and $C_\beta$ be the attractor of $\Psi$. Let $K=C_\alpha\times C_\beta$.

In this section, assuming the existence of a  slice ${\ell}_0\cap K$ with upper box dimension $\gamma>0$, we construct a family of  ergodic CP-distributions  having dimensions at least $\gamma$ and supported on measures which are supported on slices of $K$. The construction of such CP-distributions is essentially due to Furstenberg \cite{Furstenberg69}, we just reinterpret the material in our setting. 
%{\color{red}In the end of this section, we will also present and prove the main theorem of \cite[Theorem 4]{Furstenberg69}.}

Since the IFSs $\Phi$ and $\Psi$ satisfy the convex open set condition, there exist open intervals $I_\alpha$ and $I_\beta$ with $\phi_i(I_\alpha)\subset I_\alpha$ ($1\le i\le m$) and  $\psi_j(I_\beta)\subset I_\beta$ ($1\le j\le l$) such that   $$\phi_{i_1}(I_\alpha)\cap \phi_{i_2}(I_\alpha)=\emptyset  \textrm{ for } i_1\neq i_2\  \textrm{ and } \   \psi_{j_1}(I_\beta)\cap \psi_{j_2}(I_\beta)=\emptyset  \textrm{ for } j_1\neq j_2.$$
Let $\{I_\alpha^i\}_{i=1}^m$ be a partition of $\bigcup_{i=1}^m\phi_i\left(\overline{I_\alpha}\right)$ such that each $I_\alpha^i$ is an interval which may be open, closed or half open and whose interior is the same as that of $\phi_i\left(\overline{I_\alpha}\right)$. Similarly, we choose such a partition $\{I_\beta^j\}_{j=1}^l$ for $\bigcup_{j=1}^l\psi_j\left(\overline{I_\beta}\right)$.
Then we define $S_\alpha$ to be the inverse map of $\Phi$ on $\bigcup_i\phi_i(I_\alpha)$, that is, the restriction of $S_\alpha$ on $I_\alpha^i$ is $\phi_i^{-1}$ for $1\le i\le m$. Let $S_\beta$ be the inverse map of $\Psi$ on $\bigcup_j\psi_j(I_\beta)$.
We define two maps on $(\bigcup_i\phi_i(I_\alpha))\times (\bigcup_j\psi_j(I_\beta))$ by 
$$\varphi_1(x,y)=(S_\alpha(x),y)\ \ \textrm{ and }\ \ \varphi_2(x,y)=(S_\alpha(x),S_\beta(y)).$$
Then $K=C_\alpha\times C_\beta$ is invariant under both maps $\varphi_1$ and $\varphi_2$. Given a line ${\ell}$ with slope $u$ which intersects $K$, then $\varphi_1$ transforms ${\ell\cap [0,1]^2}$ into  finitely many line segments, each with slope $\alpha u$ and $\varphi_2$ transforms ${\ell\cap [0,1]^2}$ into finitely many line segments, each with slope $\alpha u/\beta$. 

Now suppose that there exists a line $\ell$ that intersects $K$ in a set of upper box dimension $\gamma>0$. The same will be true for at least one of the lines of $\varphi_1(\ell)$ and for one of the lines of $\varphi_2(\ell)$.  We can continue in this way and finally we will find a family $L$ of infinitely many lines such that each line of $L$ intersects $K$ in a set of upper box dimension $\gamma$. If the initial line $\ell$ has slope $u$ with $u\notin\{ 0,\infty\}$, then for each pair $(n,m)\in \N^2$ with $n\ge m$, there exists a line in $L$ with slope $u\alpha^n/\beta^m$. Since $\log \alpha/\log \beta$ is irrational, the set $\{u\alpha^n/\beta^m: n\ge m\}$ is dense in $(0,+\infty)$ or in $(-\infty,0)$ depending on whether $u>0$ or $u<0$. 

 In the rest of this paper, we always make the assumption that 
\begin{equation}\label{eq: assumption slice dim}
\textit{there exists a line $\ell_0$  with slope $u_0\in (0,+\infty)$  such  that  $\overline{\dim}_{\rm B}(\ell_0\cap K)=\gamma>0$.}
\end{equation}
Our ultimate aim is to show that, in this case, we must have $\dim_{\rm H} K\ge 1+\gamma.$
{For the case of negative slope $u_0$, we apply a reflection to $C_\alpha$ to make the slope positive.}

In the rest of this section, we will follow Furstenberg \cite{Furstenberg69} to construct an ergodic CP-distribution (with dimension $\gamma$) on the space of measures which are supported on  slices of $K$ with slopes in $[1,1/\beta]$.  In the end of Subsection \ref{subsection construction of CP-dist 1}, as a direct application of this CP-distribution, we will give the proof of Furstenberg's main result in \cite[Theorem 4]{Furstenberg69}: under the assumption \eqref{eq: assumption slice dim}, for Lebesgue almost all $u\in (0,+\infty)$, there exists a slice of $K$ with slope $u$ and  Hausdorff dimension $\ge \gamma$.

%{\color{red} From the above discussion, we see that whenever there exists a slice of slope $0<u<\infty$ with upper box dim $\gamma>0$, we deduce the existence of a rich (dense) family of slices of upper box dimensions $\gamma$. In the rest of this section, we will follow Furstenberg \cite{Furstenberg69} to construct a CP-distribution which is supported on slice measures of $K$, and with this CP-distribution we will deduce that for Lebesgue almost $u\in [0,+\infty)$,  In view of the above discussion, we only need to concentrate/prove this for  some interval, we will do this for the slope $u\in [1,1/\beta]$. Thus in the following we will construct a CP-distribution supported on slices of $K$ with slope in $[1,1/\beta]$.}

\subsection{Symbolic setting} 

Let $\Lambda_\alpha=\{\lambda_i^\alpha\}_{i=1}^m$ and $\Lambda_\beta=\{\lambda_j^\beta\}_{j=1}^l$. Note that $C_\alpha$ can be written as 
$$C_\alpha=\left\{\sum_{n=1}^\infty\alpha^{n-1}a_n: (a_n)_{n\ge1}\in \Lambda_\alpha^\N\right\}.$$
A similar representation holds for $C_\beta$, replacing $\alpha$ by  $\beta$ and $ \Lambda_\alpha$ by $ \Lambda_\beta$.

Write $\Lambda=\Lambda_\alpha\times \Lambda_\beta$. Let $X=\Lambda^\N$. Recall that $\theta=\log \alpha/\log \beta$.  For each $t\in [0,1)=\R/\Z$, we construct a tree $X_t\subset X=\Lambda^\N$ as follows. For $s\in [0,1)$,  write $L(s)=\Lambda$ if $s\in [0,\theta)$ and $L(s)=\Lambda_\alpha\times \{\lambda_1^\beta\}$ otherwise. We define
$$R_\theta(s)=s-\theta \mod 1 \ \ \textrm{ for } s\in [0,1).$$
In the rest of this paper, we identify $[0,1)$ with $\R/\Z$, thus $[0,1)$ is compact and $R_\theta$ is continuous on it.

Let 
$$X_t=\prod_{n=0}^\infty L(R^n_\theta(t)).$$
By definition, for $x\in X_t$, the shifted point $\sigma(x)$ is an element of $X_{R_\theta(t)}$. On each $X_t$ we consider the metric $d_\alpha$ (recall \eqref{eq: def distance simbolic 1}).

For $s\in [0,1)$, let $Z(s)=\{n\ge0: R_\theta^n(s)\in [0,\theta)\}$. We write the elements of $Z(s)$ in an increasing order as $w_1(s)<w_2(s)<\cdots$. We define a projection map $\pi_t:X_t\to K$ by
$$\pi_t\left((a_n)_n,(b_n)_n\right)=\left(\sum_{n=1}^\infty\alpha^{n-1} a_n, \sum_{n=1}^\infty\beta^{n-1} b_{w_n(t)}\right).$$
Note that $\pi_t$ is a surjective map.
%{\color{red}Each $s\in [0,1)$ corresponds to a symbolic coding of its $R_\theta$-orbit, with respect to $[0,\theta)$ and $[\theta,1)$, given by $\pi(s)=\{1_{[0,\theta)}(R^n_\theta(s))\}_n\in \{0,1\}^\N$.} The usual metric on the coding space $\{0,1\}^\N$ induces a metric $d_\theta$ on $[0,1)$  defined as
%$$d_\theta(s,t)=\frac{1}{\min \{n\ge1: w_n(s)\neq w_n(t)\}}.$$ 
%Note that the metric $d_\theta$ induces a topology on $[0,1)$ under which the map $t\mapsto (w_n(t))_n$ is continuous.

Let us record for later use some properties about $X_t$ and $\pi_t$ in the following lemma. We use ${\rm cov}_{r}(A)$ to denote the minimal number of balls of diameter $r$ needed to cover a set $A$. Recall also the notation $N_{\alpha^k}(A)$ (see Section \ref{subsection Partition and entropy 1}).
\begin{lemma}\label{lemma basic propeties 1}
 \begin{itemize}
\item[(1)]  If $t_k, t\in [0,1)$ are such that  $t_k\to t$ and $R^n_\theta(t)\neq \theta$ for all $n$,  then $X_{t_k}\to X_t$ (under the Hausdorff metric) and $\pi_{t_k}\to \pi_{t}$.

\item[(2)]  There exists a constant $C_1>0$ such that the maps $\pi_t$ are uniformly $C_1$-Lipschitz.

\item[(3)]  There exists a constant $C_2>0$ such that for all $t\in [0,1)$ and all $A\subset X_t$, $N_{\alpha^k}(A)\le C_2\cdot{\rm cov}_{\alpha^k}(\pi_t(A))$ for each  $k\in \N$.
\item[(4)] For all $t\in [0,1)$ and all $A\subset X_t$, we have $\dim_{\rm H}A=\dim_{\rm H}\pi_t(A) $.
\end{itemize}
\end{lemma}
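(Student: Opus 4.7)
The four assertions are essentially routine and can be addressed in order; (1) and (2) reduce to direct calculations on the symbolic sequences, while (3) and (4) follow by combining (2) with the separation of cylinders provided by the convex open set condition.

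For (1), the key observation is that $d_\theta(t_k,t)<1/N$ forces $w_j(t_k)=w_j(t)$ for every $j\le N$, so the property ``position $n$ is $b$-active'' agrees for $t$ and $t_k$ at all $n\le w_N(t)$; in particular the factors $L(R_\theta^n(\cdot))$ defining $X_t$ and $X_{t_k}$ coincide for $n=0,1,\dots,w_N(t)$. This immediately gives Hausdorff distance $O(\alpha^{w_N(t)})\to 0$. Extending $\pi_t$ to all of $\Lambda^\N$ by the same formula, the first-coordinate outputs of $\pi_t$ and $\pi_{t_k}$ coincide, while the second-coordinate outputs differ only in summands indexed by $n>N$; uniform convergence then holds at rate $O(\beta^N)$.

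For (2), take $x,y\in X_t$ with $d(x,y)=\alpha^k$. The first-coordinate difference of $\pi_t(x)-\pi_t(y)$ is the usual IFS tail of size $O(\alpha^k)$, uniform in $t$. The second-coordinate difference has size $O(\beta^m)$, where $m=|Z(t)\cap[0,k-1]|$ counts the $b$-active sequence positions in the first $k$ slots. The crux---and the one non-trivial input in the whole lemma---is to bound $|m-k\theta|$ uniformly in $t$ and $k$. This is a Denjoy--Koksma / Kesten estimate for Birkhoff sums of $\mathbf{1}_{[0,\theta)}-\theta$ under the rotation $R_\theta$; it applies because the length of the interval coincides with the rotation number, placing us in the bounded-discrepancy regime. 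Since $\beta^{k\theta}=\alpha^k$, this yields the desired uniform Lipschitz constant.

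For (3), given a cover of $\pi_t(A)$ by balls of diameter $\alpha^k$, each preimage $\pi_t^{-1}(B_i)\cap X_t$ meets only boundedly many level-$k$ cylinders of $X_t$: the convex open set condition for $\Phi$ separates distinct level-$k$ $\alpha$-cylinders by a uniform gap, so the first-coordinate localization $|\sum\alpha^{n-1}a_n-x_i|\le\alpha^k$ admits $O(1)$ choices of $a_1\cdots a_k$, and the same argument using $\Psi$ handles the $b$-coordinate. For (4), the Lipschitzness from (2) gives $\dim_{\rm H}\pi_t(A)\le\dim_{\rm H} A$, while the covering lift from (3), applied to $\delta$-covers and using that each $\pi_t^{-1}(B_i)\cap X_t$ is contained in $O(1)$ cylinders of diameter $\le |B_i|$, yields $\mathcal{H}^s(A)\le C\,\mathcal{H}^s(\pi_t(A))$ for every $s$, providing the opposite inequality. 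The only substantive obstacle in the whole lemma is the uniform discrepancy estimate in (2); once it is secured, the rest is mechanical.
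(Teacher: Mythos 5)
Your argument is correct, and it is worth noting that the paper itself offers no proof at all (the lemma is introduced with ``The following results are obvious''), so there is nothing to diverge from; your write-up simply supplies the omitted details, and it identifies the right one-and-only nontrivial point, namely the uniform bound on $\bigl|\#\{0\le n<k:R_\theta^n(t)\in[0,\theta)\}-k\theta\bigr|$. Two small comments. First, you reach that bound by invoking Denjoy--Koksma/Kesten; Kesten's theorem does apply (the interval length $\theta$ lies in $\theta\Z+\Z$, so $[0,\theta)$ is a bounded remainder set), but Denjoy--Koksma by itself only controls Birkhoff sums at continued-fraction denominators, not at all times $k$. In any case the heavy machinery is unnecessary here: since $R_\theta^n(t)=\{t-n\theta\}$ and $\{t-n\theta\}\in[0,\theta)$ exactly when $\lfloor t-(n+1)\theta\rfloor=\lfloor t-n\theta\rfloor-1$, the count telescopes to $-\lfloor t-k\theta\rfloor$, which differs from $k\theta$ by less than $2$ uniformly in $t$ and $k$; this gives $\beta^{m}\le \beta^{-2}\alpha^{k}$ directly. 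Second, in (3) the convex open set condition does not provide a ``uniform gap'' between level-$k$ cylinder intervals --- the closed intervals $\phi_{a_1\cdots a_k}(\overline{I_\alpha})$ may touch at endpoints --- but your conclusion survives: these intervals have pairwise disjoint interiors and length comparable to $\alpha^k$, so an interval of length $O(\alpha^k)$ meets $O(1)$ of them, and the same holds in the second coordinate at scale $\beta^{m}\asymp\alpha^{k}$; since a level-$k$ cylinder of $X_t$ is determined by the $a$-prefix together with the ``active'' $b$-digits $b_{w_1(t)},\dots,b_{w_m(t)}$ (the inactive ones being forced), the bounded-multiplicity claim and hence (3), and with it the lower bound $\mathcal{H}^s(A)\le C\,\mathcal{H}^s(\pi_t(A))$ needed for (4), go through as you state.
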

\begin{proof}

We give the proof for parts (1) and (2), the other parts are obvious. The first part follows from the fact that, if $R^n_\theta(t)\neq \theta$ for all $n\le M$, then for all sufficiently large $k$ we have $w_n(t_k)=w_n(t)$ for all $n\le M$, and this implies that the first $M$ generations of the trees $X_{t_k}$ and $X_t$ coincide, and that $\pi_{t_k}$ is uniformly close to $\pi_t$.

To prove part (2), it suffices to show that there is $C_1$ (independent of $t$) such that 
$$C_1^{-1}\alpha^k\le \beta^{r_k(t)}\le C_1\alpha^{k},$$
where $r_k(t)=\sharp\{0\le i\le k-1: R^i_\theta(t)\in [0,\theta)\}$. This is equivalent to say that $|r_k(t)-k\theta|$ is bounded by some uniform constant. To show this, we only need to observe that $r_k(t)$ is the number of  $i\in \{0,\ldots,k-1\}$ such that  there exists an integer $n$ with $t-i\theta\ge n$ and $t-(i+1)\theta<n$. Thus $-r_k(t)$ is the largest integer not greater than $t-k\theta$, from which we deduce that $|r_k(t)-k\theta|\le 2$.

\end{proof}

Let $\ell_{u,z}$ denote the line through $z$ with slope $u$. We define
$$\mathcal{F}=\left\{(A,x,t):  t\in [0,1), A \subset X_t \textrm{ is compact, } x\in A, \pi_t(A)\subset K\cap \ell_{\beta^{-t},\pi_t(x)}\right\}.$$
Note that for any line $\ell_{\beta^{-t},z}$ with $t\in [0,1), z\in K$ and any $x\in \pi_t^{-1}(z)$, the set $( \pi_t^{-1}(K\cap \ell_{\beta^{-t},z}),x,t)\in \mathcal{F}$.

\begin{lemma}\label{lemma basic propeties 2}
\begin{itemize}
\item[(1)]  If $(A,x,t)\in \mathcal{F}$, then $(\sigma(A\cap [x_1]),\sigma(x),R_\theta(t))\in \mathcal{F}$.

\item[(2)]  Suppose $(A_k,y_k,t_k)\to (A,x,t)$ and $(A_k,y_k,t_k)\in \mathcal{F}$ for each $k$. If $R^n_\theta(t)\neq \theta$ for all $n$,  then $(A,x,t)\in \mathcal{F}$.
\end{itemize}
\end{lemma}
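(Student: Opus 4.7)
My approach is to verify the three defining conditions of $\mathcal{F}$ directly in each part. The heart of the argument for part (1) is the intertwining identity
$$\pi_{R_\theta(t)} \circ \sigma = \Phi_t \circ \pi_t$$
valid on each first-level cylinder $[y_1] \cap X_t$ with $y \in X_t$. This reflects the fact that the symbolic shift corresponds to applying the inverse IFS map on the first letter, with the $b$-coordinate advancing only when $t \in [0,\theta)$; this matches exactly the dichotomy in the definition of $\Phi_t \in \{\varphi_1, \varphi_2\}$ from \eqref{eq: definition Phi_t}. The identity can be checked directly from the formula for $\pi_t$, after noting that $0 \in Z(t)$ precisely when $t \in [0,\theta)$ and that the restriction to a cylinder is what makes the piecewise inverses $S_\alpha$ and $S_\beta$ well-defined single-valued maps.

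Granted this, part (1) goes as follows. The set $A \cap [x_1]$ is compact since $[x_1]$ is clopen, so $\sigma(A \cap [x_1])$ is compact and clearly contains $\sigma(x)$. The inclusion $\sigma(A \cap [x_1]) \subset X_{R_\theta(t)}$ follows from $\sigma(X_t) = X_{R_\theta(t)}$, itself immediate from $X_t = \prod_{n=0}^\infty L(R_\theta^n(t))$. For the slice condition, apply the intertwining identity on $[x_1] \cap X_t$ to get $\pi_{R_\theta(t)}(\sigma(A \cap [x_1])) = \Phi_t(\pi_t(A \cap [x_1]))$. By hypothesis $\pi_t(A) \subset l_{\beta^{-t}, \pi_t(x)} \cap K$, so Lemma \ref{lemma change of slope when apply Phi-t 1} says $\Phi_t$ sends this slice to finitely many line pieces of slope $\beta^{-R_\theta(t)}$; the restriction to $[x_1]$ selects the unique such piece, and it passes through $\Phi_t(\pi_t(x)) = \pi_{R_\theta(t)}(\sigma(x))$, giving the required inclusion.

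For part (2), I interpret the convergence componentwise: $A_k \to A$ in Hausdorff metric, $y_k \to x$ in the symbolic metric, and $t_k \to t$ in $d_\theta$. Compactness of $A$ and $x \in A$ are standard properties of Hausdorff limits; $A \subset X_t$ follows since $A_k \subset X_{t_k}$ and $X_{t_k} \to X_t$ in Hausdorff metric by Lemma \ref{lemma basic propeties 1}(1). For the slice condition, $\pi_{t_k} \to \pi_t$ uniformly by the same lemma, hence $\pi_{t_k}(A_k) \to \pi_t(A)$ in Hausdorff metric and $\pi_{t_k}(y_k) \to \pi_t(x)$. Further, $d_\theta$-convergence implies Euclidean convergence on $[0,1)$: if $w_j(t_k) = w_j(t)$ for $j \le N$, then $t_k$ and $t$ lie in a common cell of the refinement of $\{[0,\theta), [\theta,1)\}$ under $R_\theta^{-j}$, $j \le w_N(t)$, and by irrationality of $\theta$ these cells shrink to points as $N \to \infty$. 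Consequently $\beta^{-t_k} \to \beta^{-t}$, so the lines $l_{\beta^{-t_k}, \pi_{t_k}(y_k)}$ converge to $l_{\beta^{-t}, \pi_t(x)}$ uniformly on compact subsets, and $\pi_t(A) \subset l_{\beta^{-t}, \pi_t(x)} \cap K$ follows by passing to the limit.

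The main obstacle is the bookkeeping needed to establish the intertwining identity in part (1): the indexing $w_n(t)$ makes the direct verification somewhat fiddly, and one must carefully distinguish the two cases $t \in [0,\theta)$ and $t \notin [0,\theta)$. Conceptually, however, both parts are routine consequences of the structural fact that $\Phi_t$ is precisely the symbolic coding, through $\pi_t$, of the shift $\sigma$ on cylinders.
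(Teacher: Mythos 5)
Your proof is correct and follows essentially the same route as the paper: part (1) rests on the intertwining identity $\pi_{R_\theta(t)}\circ\sigma=\Phi_t\circ\pi_t$ applied to $A\cap[x_1]$, and part (2) follows from the continuity statements in Lemma \ref{lemma basic propeties 1}(1). You simply spell out the verifications that the paper leaves implicit.
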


\begin{proof}
Note that for $x'\in X_t$, we have $\pi_{R_\theta(t)}(\sigma(x'))=\Phi_t(\pi_t(x'))$. Thus we have 
$$\pi_{R_\theta(t)}(\sigma(A\cap [x_1]))=\Phi_t(\pi_t(A\cap [x_1])).$$
From this we deduce the claim (1). The claim (2) is a consequence of part (1) of Lemma \ref{lemma basic propeties 1}. 
\end{proof}

\subsection{Construction of CP-distributions} \label{subsection construction of CP-dist 1}

Consider the space
$$Y=\mathcal{P}(X)\times X\times [0,1). $$
Note that $Y$ is a compact space.
We define a map $\hat{M}$ on $Y$ by
$$\hat{M}(\mu,x,t)=(\mu^{[x_1]},\sigma(x),R_\theta(t)).$$
The map $\hat{M}$ can be viewed as an ``extension" of the magnification operator $M$ in Definition \ref{definition magnification operator}.
It is continuous on $Y$ (where we consider the weak topology on $\mathcal{P}(X)$).

By the assumption \eqref{eq: assumption slice dim} and the discussion preceding it, there exist some $t_0\in [0,1)$ and a line $\ell$ with slope $\beta^{-t_0}$ such that $\overline{\dim}_{\rm B}  K\cap \ell=\gamma>0$. Let $E=\pi_{t_0}^{-1}(K\cap \ell)$. Then by parts (2) and (3)  of Lemma \ref{lemma basic propeties 1}, we have $\overline{\dim}_{\rm B} E=\gamma$ (in the space $X_{t_0}$).
Thus there exists a sequence $n_k\nearrow \infty$ such that
\begin{equation}\label{eq: upper box dim A1}
 \lim_{k\to\infty}\frac{\log N_{\alpha^{n_k}}(E)}{-n_k\log \alpha}=\gamma.
\end{equation}
We define a sequence of measures $\{\mu_k\}_k$ on $E$ by setting
$$\mu_k=\frac{1}{N_{\alpha^{n_k}}(E)}\sum_{u\in \Lambda^{n_k}: [u]\cap E\neq \emptyset} \delta_{x_u},$$
where $x_u$ is some point in $[u]\cap E$. Finally,  let
$$
P_k  =  \frac{1}{N_{\alpha^{n_k}}(E)}\sum_{u\in \Lambda^{n_k}: [u]\cap E\neq \emptyset} \delta_{(\mu_k,x_u,t_0)}
$$
and
$$
Q_k  =  \frac{1}{n_k}\sum_{i=0}^{n_k-1} \hat{M}^iP_k.
$$
By the construction of $P_k$, it is clear that for any $f\in C(Y)$, we have 
$$
\int f(\mu,x,t)dP_k(\mu,x,t)=\int \left(\int f(\mu,x,t)d\mu(x)\right) d (P_k)_{1,3} (\mu,t),
$$ 
where we use $(P_k)_{1,3}$ to denote the marginal of $P_k$ on the first and third coordinates. The same is true for $Q_k$. Let us call a distribution $P\in \mathcal{P}(Y)$ {\em globally adapted} if it satisfies the above identity. It follows from the definition that if a property holds for $P$-a.e. $(\mu,x,t)$  and  $P$ is globally adapted, then this property holds for $P_{1,3}$-a.e. $(\mu,t)$ and $\mu$-a.e. $x$.
Clearly, for a globally adapted distribution, its marginal on the first two coordinates $(\mu,x)$ is adapted in the sense of Definition \ref{definition Adaptedness}. For each $P\in \mathcal{P}(Y)$, we define
$$H(P)=\int \frac{1}{\log \alpha}\log \mu[x_1]dP_{1,2}(\mu,x),$$
where $P_{1,2}$ is the marginal of $P$ on  $(\mu,x)$. Let us calculate 
\begin{eqnarray*}
H(Q_k) & = & \frac{1}{n_k}\frac{1}{N_{\alpha^{n_k}}(E)} \sum_{u\in \Lambda^{n_k}: [u]\cap E\neq \emptyset}\sum_{i=1}^{n_k}\frac{1}{\log \alpha}\log\frac{\mu_k[u_1^i]}{\mu_k[u_1^{i-1}]} \\
 & = & \frac{1}{n_k}\frac{1}{N_{\alpha^{n_k}}(E)} \sum_{u\in \Lambda^{n_k}: [u]\cap E\neq \emptyset}\frac{1}{\log \alpha}\log\mu_k[u]=\frac{\log N_{\alpha^{n_k}}(E)}{-n_k\log \alpha}.
\end{eqnarray*}
It follows from \eqref{eq: upper box dim A1} that 
$$H(Q_k)\to \gamma \ \textrm{  as } k\to\infty.$$
Passing to a  further subsequence we can assume that $Q_k\to Q$ in $\mathcal{P}(Y)$. Now by continuity of $\hat{M}$, $Q$ is $\hat{M}$-invariant; and since each $Q_k$ is globally adapted, we deduce that $Q$ is also  globally  adapted. Thus the marginal of $Q$ on  $(\mu,x)$ is a CP-distribution.
Since the map $H$ is continuous on $\mathcal{P}(Y)$, we have
$$H(Q)=\lim_{k\to\infty}H(Q_k)=\gamma.$$

Let $$Q=\int Q^{(\mu,x,t)}d Q(\mu,x,t)$$ be the ergodic decomposition of $Q$. We define
\begin{equation}\label{eq: def mathcal E gamma}
\mathcal{E}_\gamma=\left\{(\mu,x,t)\in Y: H(Q^{(\mu,x,t)})\ge \gamma\right\}.
\end{equation}
Then we have $Q(\mathcal{E_\gamma})>0$ and for $Q$-a.e. $(\mu,x,t)\in \mathcal{E}_\gamma$, the marginal of $Q^{(\mu,x,t)}$ on the first two coordinates, denoted by $Q^{(\mu,x,t)}_{1,2}$, is an ergodic CP-distribution with dimension $H(Q^{(\mu,x,t)})\ge \gamma$. Note that for the adaptedness of $Q^{(\mu,x,t)}_{1,2}$, we have used Proposition \ref{prop ergodic decomposition}.

Let 
$$\Xi_{\mathcal{F}}=\bigcup_{(A,x,t)\in \mathcal{F}}\mathcal{P}(A)\times \{x\}\times\{t\}.$$

\begin{lemma}\label{lemma full distribution of Xi-F}
The distribution $Q$ is supported on $\Xi_{\mathcal{F}}$. In particular, this holds for $Q$-a.e. ergodic component of $Q$.
\end{lemma}
\begin{proof}
We need to prove that for $Q$-a.e. $(\mu,x,t)$ we have $({\rm supp}(\mu),x,t)\in \mathcal{F}$. Since $Q$ is a weak limit of $Q_k$ and each $Q_k$ is supported on $\Xi_{\mathcal{F}}$, it follows that $Q$ is supported on triples of the form 
$$(\mu,x,t)=\lim_{k\to\infty}(\mu_k,x_k,t_k)$$
with $({\rm supp}(\mu_k),x_k,t_k)\in \mathcal{F}$. Now, since the marginal of $Q$ on the third coordinate is an $R_\theta$-invariant measure on $[0,1)$, it must be Lebesgue measure. Thus for $Q$-a.e. $(\mu,x,t)$, we have $R_\theta^n(t)\neq \theta$ for all $n$. From this, part (2) of Lemma \ref{lemma basic propeties 2} and the fact that ${\rm supp}(\mu)\subset \liminf_{k\to\infty}{\rm supp}(\mu_k)$, we deduce that $({\rm supp}(\mu),x,t)\in \mathcal{F}$.
\end{proof}

We finish this subsection by giving the proof of the following result of Furstenberg \cite[Theorem 4]{Furstenberg69} 
by using the CP-distributions $\{Q^{(\mu,x,t)}_{1,2}\}_{(\mu,x,t)}$ we constructed above.

\begin{theorem}[Furstenberg, \cite{Furstenberg69}]\label{thm Furstenberg result 69}
Assume that \eqref{eq: assumption slice dim} hold. Then for Lebesgue almost every $u\in (0,+\infty)$, there exists a line $\ell$ with slope $u$ such that $\dim_{\rm H}\ell\cap K\ge \gamma.$
\end{theorem}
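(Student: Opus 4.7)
The plan is to decompose the CP-distribution $Q$ constructed above into $\hat M$-ergodic components and use one with $H\ge\gamma$, exploiting the unique ergodicity of the irrational rotation $R_\theta$ to force its $t$-marginal to be Lebesgue measure; this will produce slices of large Hausdorff dimension for Lebesgue-a.e.\ slope in the band $[1,1/\beta)$. I would then spread these slopes across $(0,+\infty)$ by self-similar transport. Since $\int H(Q^{(\omega)})\,dQ(\omega)=H(Q)=\gamma$ and $H$ is bounded (by $\log|\Lambda|/|\log\alpha|$, via adaptedness), $Q(\mathcal{E}_\gamma)>0$, so I can fix an $\hat M$-ergodic component $Q^*$ of $Q$ with $H(Q^*)\ge\gamma$. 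The rotation $R_\theta$ is a factor of $\hat M$ through the $t$-coordinate, so the $t$-marginal of $Q^*$ is $R_\theta$-invariant and $R_\theta$-ergodic; by unique ergodicity of an irrational rotation it must equal Lebesgue measure on $[0,1)$. Moreover $Q^*$ inherits from $Q$ that it is concentrated on the closed set $\Xi_{\mathcal{F}}$, and the ergodic-decomposition argument of Proposition \ref{prop ergodic decomposition} (adapted to $Y=\mathcal{P}(X)\times X\times S^1$) gives that $Q^*$ is globally adapted.

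Next I would read off slice dimensions. Since $M$-invariant sets in the $(\mu,x)$-coordinate lift to $\hat M$-invariant sets of the form $A\times S^1$ in $Y$ and $Q^*$ is $\hat M$-ergodic, the marginal $Q^*_{1,2}$ is $M$-ergodic; combined with global adaptedness of $Q^*$ this makes $Q^*_{1,2}$ an ergodic CP-distribution with $\dim Q^*_{1,2}=H(Q^*)\ge\gamma$. By Proposition \ref{proposition exact dim and formula}, a $Q^*_{1,2}$-typical $\mu$ is exact dimensional with $\dim\mu\ge\gamma$. Using that $Q^*$ is supported on $\Xi_{\mathcal{F}}$, for $Q^*$-a.e.\ $(\mu,x,t)$ the set $A:=\operatorname{supp}(\mu)$ satisfies $(A,x,t)\in\mathcal{F}$, so $\pi_t(A)\subset K\cap l_{\beta^{-t},\pi_t(x)}$, and from $\dim_H A\ge\gamma$ together with part (4) of Lemma \ref{lemma basic propeties 1} we obtain $\dim_H(K\cap l_{\beta^{-t},\pi_t(x)})\ge\gamma$. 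Disintegrating $Q^*$ over the $t$-coordinate (whose marginal is Lebesgue), Fubini yields that for Lebesgue-a.e.\ $t\in[0,1)$ some line of slope $\beta^{-t}$ meets $K$ in a set of Hausdorff dimension at least $\gamma$; since $t\mapsto\beta^{-t}$ is a diffeomorphism onto $[1,1/\beta)$, the same conclusion holds for Lebesgue-a.e.\ $u\in[1,1/\beta)$.

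Finally I would spread the good set of slopes to all of $(0,+\infty)$ by self-similar transport. Given a line $l$ of slope $u$ with $\dim_H(l\cap K)\ge\gamma$, the set $\varphi_1(l\cap K)\subset K$ has the same Hausdorff dimension (bi-Lipschitz on each $\phi_i(I_\alpha)\times I_\beta$) and is contained in a finite union of lines of slope $\alpha u$, so at least one of these produces a $\gamma$-dimensional slice; conversely, pushing a $\gamma$-dimensional slice of slope $\alpha u$ forward by some $\phi_i\times\mathrm{id}$ returns a $\gamma$-dimensional slice of slope $u$. Thus the set of good slopes $G\subset(0,+\infty)$ satisfies $\alpha G=G$, and since $\beta<\alpha$ the intervals $\alpha^n[1,1/\beta)$, $n\in\Z$, overlap and cover $(0,+\infty)$; as $G$ has full Lebesgue measure in each of them, it has full Lebesgue measure in $(0,+\infty)$. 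The main nontrivial point in the whole argument is the upgrade from the upper-box-dimension hypothesis \eqref{eq: assumption slice dim} to a Hausdorff-dimensional lower bound on the slices, and this is precisely what the exact-dimensionality of CP-typical measures (Proposition \ref{proposition exact dim and formula}) supplies; the remaining ingredients are unique ergodicity of $R_\theta$ and routine bi-Lipschitz transport.
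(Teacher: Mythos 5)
Your proposal is correct and follows essentially the same route as the paper: pick an $\hat M$-ergodic component of $Q$ with $H\ge\gamma$ supported on $\Xi_{\mathcal F}$, note its third marginal must be Lebesgue by unique ergodicity of $R_\theta$, use exact dimensionality of CP-typical measures (Proposition \ref{proposition exact dim and formula}) together with part (4) of Lemma \ref{lemma basic propeties 1} to get $\gamma$-dimensional slices for a.e.\ slope in $[1,\beta^{-1}]$, and then transport to all of $(0,+\infty)$. The only differences are that you spell out details the paper leaves implicit (why $Q(\mathcal E_\gamma)>0$, why the marginal $Q^*_{1,2}$ is $M$-ergodic, and the $\alpha G=G$ argument behind the reduction to the band of slopes), all of which check out.
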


\begin{proof}
By the discussion preceding assumption \eqref{eq: assumption slice dim}, we only need to show that for Lebesgue almost every $u\in [1,\beta^{-1}]$, there exists a line $\ell$ with slope $u$ such that $\dim_{\rm H}\ell\cap K\ge \gamma.$  Let $Q, \mathcal{E}_\gamma,\Xi_{\mathcal{F}}$ be as above. We choose an element $(\mu,x,t)\in \mathcal{E}_\gamma$ such that the ergodic component $Q^{(\mu,x,t)}$ is supported on $\Xi_{\mathcal{F}}$ and its marginal $Q^{(\mu,x,t)}_{1,2}$ is an ergodic CP-distribution (with dimension at least $ \gamma$). Thus for  $Q^{(\mu,x,t)}$-a.e. $(\vartheta, y,s)$, $\vartheta$ is a measure with dimension at least $ \gamma$. Again, since the marginal of $Q^{(\mu,x,t)}$ on the third coordinate is an $R_\theta$-invariant measure on $[0,1)$, it must be Lebesgue measure.
Hence for Lebesgue almost every $s\in [0,1)$, there exists $(\vartheta,y)$ such that $(\vartheta,y,s)\in \Xi_{\mathcal{F}}$ and $\dim \vartheta\ge \gamma$. From the definition of $\Xi_{\mathcal{F}}$ and  part (4) of Lemma \ref{lemma basic propeties 1}, we deduce that there exists a line $\ell$ with slope $\beta^{-s}$ such that $\dim_{\rm H} \ell\cap K\ge \gamma.$

\end{proof}

%Let's write $\Xi_{\mathcal{F}}^1$ for the projection of $\Xi_{\mathcal{F}}$ on the first component. Then each element of $\Xi_{\mathcal{F}}^1$ is a Borel  probability measure supported on some slice of $K$ whose slope is in $\{\beta^{-t}: t\in [0,1)\}$.

\smallskip

\section{A skew product $U$ on $K\times [0,1)$ and a class  of  $U$-invariant measures }\label{subsection special measure 1}

In the previous section, we have constructed a family of ergodic $\hat{M}$-invariant distributions $\{Q^{(\mu,x,t)}\}_{(\mu,x,t)\in \mathcal{E}_\gamma}$ whose marginals on the first two coordinates are ergodic CP-distributions  having dimensions at least $ \gamma$ and supported on  measures which are supported on slices of $K$. 
In Subsection \ref{subsection The transformation $U$ and its basic properties}, we will define a skew product on $K\times [0,1)$, which can be regarded as the geometric version of the shift map $\sigma$ on $X_t$ ($t\in [0,1)$), and we study some partitions generated by $U$. In Subsection \ref{subsection construction of a class of U-inv measures}, we will  construct a family of $U$-invariant measures such that each of them is a certain form of  superposition of measures distributed according to  $Q^{(\mu,x,t)}$ with some $(\mu,x,t)\in \mathcal{E}_\gamma$. In Subsection \ref{subsection Properties and entropy of the measure},   we will study  some further properties of  such a $U$-invariant measure.

\subsection{The transformation $U$ and some basic properties}\label{subsection The transformation $U$ and its basic properties}

 For each $t\in [0,1)$, we define a map $\Phi_t:K\to K$ by 
\begin{equation}\label{eq: definition Phi_t}
\Phi_t(x,y) = \left\{ \begin{array}{ll}
(S_\alpha(x),S_\beta(y)) & \textrm{if $t\in [0,\theta)$}\\
(S_\alpha(x),y) & \textrm{otherwise.}
\end{array} \right.
\end{equation}
Note that, by the discussion about $S_\alpha$ and $S_\beta$ at the beginning of Section \ref{section Constructions of CP-distributions on K}, we have the following result.

\begin{lemma}\label{lemma change of slope when apply Phi-t 1}
If $\ell$ is a line with slope $\beta^{-t}$ ($t\in [0,1)$) which intersects $K$, then $\Phi_t(\ell)$ consists of a finite number of lines, each of which has slope $\beta^{-R_\theta(t)}$. 
\end{lemma}

We consider the following transformation $U:K\times [0,1)\to K\times [0,1)$ defined as a skew product
$$U(z,t)=(\Phi_t(z),R_\theta(t)).$$
Recall that $\Phi_t$ is defined by \eqref{eq: definition Phi_t} and $R_\theta$ is the irrational rotation map defined by $R_\theta(t)=t-\theta \mod 1.$ 

Let us write $U^n_t(z)$ for the first component of $U^n(z,t)$. Then it follows from the definition of $U$ that we have $$U_t^n(z)=\Phi_{R^{n-1}_\theta(t)}\circ\cdots\circ\Phi_{R_\theta(t)}\circ\Phi_{t}(z)=(S_\alpha^n(z_1),S_\beta^{r_n(t)}(z_2)) \ \textrm{ for } z=(z_1,z_2),$$
where $r_n(t):=\sharp\{0\le k\le n-1: R^{k}_\theta(t)\in [0,\theta) \}$.

In the following, we define a sequence of refining partitions of $K\times [0,1)$, which is generated by $U$. First, recall that  $\{I_\alpha^i\}_{i=1}^m$ and $\{I_\beta^j\}_{j=1}^l$ are, respectively,  partitions of $\bigcup_{i=1}^m\phi_i(\overline{I_\alpha})$ and  $\bigcup_{j=1}^l\psi_j(\overline{I_\beta})$ (see the beginning of Section \ref{section Constructions of CP-distributions on K}). We take $\mathcal{C}=\{[0,\theta),[0,1)\setminus [0,\theta) \}$ as a partition of $[0,1)$. Let 
\begin{equation}\label{eq: def of partition level 1}
\mathcal{B}_1=\{I_\alpha^i\cap C_\alpha\}_{i=1}^m\times \{I_\beta^j\cap C_\beta\}_{j=1}^l\times \mathcal{C}
\end{equation}
be our first level partition of $K\times [0,1)$. Then for $n\ge 2$, let 
$$\mathcal{B}_n=\bigvee_{k=0}^{n-1}U^{-k}(\mathcal{B}_1).$$ 

For later use, let us give some more details about the partitions $\{\mathcal{B}_n\}_n$. For $n\ge 1$, let 
$$\mathcal{C}_n=\bigvee_{k=0}^{n-1}R_\theta^{-k}(\mathcal{C}).$$
Recall that the map $U_t^k$ is defined via the relation $U^k(z,t)=(U_t^k(z),R_\theta^k(t))$.   For $n\ge 1$ and $t\in [0,1)$, let 
$$\mathcal{A}_n^t=\bigvee_{k=0}^{n-1}(U_t^{k})^{-1}\left(\{I_\alpha^i\cap C_\alpha\}_{i=1}^m\times \{I_\beta^j\cap C_\beta\}_{j=1}^l\right).$$
Note that by the fact $U_t^n(z)=(S_\alpha^n(z_1),S_\beta^{r_n(t)}(z_2))$, we have 
\begin{equation}\label{eq: definition partition A-n-t 1}
\mathcal{A}_n^t=\left(\bigvee_{k=0}^{n-1}S_\alpha^{-k}\left(\{I_\alpha^i\cap C_\alpha\}_{i=1}^m\right)\right)\times \left(\bigvee_{k=0}^{n-1}S_\beta^{-r_k(t)}\left(\{I_\beta^j\cap C_\beta\}_{j=1}^l\right)\right) .
\end{equation}
Thus by the definition of $\{r_k(t)\}_k$, we have $\mathcal{A}_n^t=\mathcal{A}_n^{t'}$ if $t$ and $t'$ both belong to a same element of $\mathcal{C}_n$.
By the definition of $U$, it is not hard to check that each element of $\mathcal{B}_n$ has the form $A\times C$ with some $C\in \mathcal{C}_n$ and $A\in\mathcal{A}_n^t$ for some $t\in C$. 

%{\color{red}(Attention: there may be some problems, it is very possible that $\mathcal{A}_n^t$ is not the usual partition that generated by $U^n_t$, it seems that for certain  $t\in [0,1)$, $\mathcal{A}_1^t$ is not the partition we want.)}

As usual, for all $z\in K$, we write $\mathcal{A}_n^t(z)$ for the unique element of $\mathcal{A}_n^t$ containing $z$. 
For any measure $\nu\in \mathcal{P}(K)$ and $z\in {\rm supp}(\nu)$, we write 
\begin{equation}\label{eq: def magnification measure 1}
\nu^{\mathcal{A}_n^t(z)} =  U^n_t\left(\frac{\nu|_{\mathcal{A}_n^t(z)}}{\nu (\mathcal{A}_n^t(z))}\right).
\end{equation}
 Note that if $\nu\in \mathcal{P}(\ell\cap K)$ for some line $\ell$ with slope $\beta^{-t}$, then  $\nu^{\mathcal{A}_n^t(z)}\in \mathcal{P}(\ell'\cap K)$ for some line $\ell'$ with slope $\beta^{-R_\theta^n(t)}$. Recall also the notation $\mu^{[x_1^n]}$ (see \eqref{eq: definition rescaled measure symbolic 1}).
In what follows, the boundary of $\mathcal{A}_n^t$ should be understood as relative to the space $K$.
\begin{lemma}\label{lemma basic partition 2}
\begin{itemize}
\item[(1)] Let $t\in [0,1)$ and $x\in X_t$. If $\pi_t(x)$ is not at the boundary of $\mathcal{A}_n^t(\pi_t(x))$, then the set $\pi_t([x_1^n])$ coincides with $\mathcal{A}_n^t(\pi_t(x))\cap K$ except possibly at the boundary points of $\mathcal{A}_n^t(\pi_t(x))$. 
%{\color{red}(Here is the problem: it is possible that $\pi_t([x_1^n])$ is different from $\overline{\mathcal{A}_n^t(\pi_t(x))}\cap K$ for $n=1$ and some $t$. To be checked!!)}

\item[(2)] Let $(\mu,x,t)\in \Xi_{\mathcal{F}}$. If $\mu $ is non-atomic, then for $\mu$-a.e. $x$ and $n\ge1$, we have
\begin{equation}\label{eq: special measure 8}
\pi_{R_\theta^n(t)}\left(\mu^{[x_1^n]}\right)=(\pi_t\mu)^{\mathcal{A}_n^t(\pi_t(x))}.
\end{equation}
\end{itemize}
\end{lemma}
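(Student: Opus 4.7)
The natural approach is to treat parts (1) and (2) separately, with (2) following from (1) together with the equivariance of $\pi_t$ with the shift.

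For part (1), I would unravel both sides as explicit products. Fixing the cylinder $[x_1^n]$ with $x_k=(a_k,b_k)$ forces the first $n$ letters of the $\alpha$-coding and the letters of the $\beta$-coding at the indices in $Z(t)\cap\{1,\dots,n\}$, of which there are $r_n(t)$; consequently
$$\pi_t([x_1^n])\;=\;\phi_{a_1}\circ\cdots\circ\phi_{a_n}(C_\alpha)\,\times\,\psi_{b_{w_1(t)}}\circ\cdots\circ\psi_{b_{w_{r_n(t)}(t)}}(C_\beta).$$
On the other side, formula \eqref{eq: definition partition A-n-t 1} exhibits $\mathcal{A}_n^t(\pi_t(x))$ as the product of the element of the $n$-level $S_\alpha$-itinerary partition of $C_\alpha$ containing the first coordinate of $\pi_t(x)$ and the element of the effectively $r_n(t)$-level $S_\beta$-itinerary partition of $C_\beta$ containing its second coordinate. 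Unravelling these itineraries via $S_\alpha^{-1}|_{I_\alpha^i}=\phi_i$ and $S_\beta^{-1}|_{I_\beta^j}=\psi_j$ yields the same two product factors, the only discrepancy being between $I_\alpha^i$ (respectively $I_\beta^j$) and $\phi_i(\overline{I_\alpha})$ (respectively $\psi_j(\overline{I_\beta})$), which sits on the boundary of the partition element. The boundary-avoidance hypothesis eliminates this discrepancy.

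For part (2), iterating the identity $\pi_{R_\theta(t)}\circ\sigma=\Phi_t\circ\pi_t$ (established in the proof of Lemma \ref{lemma basic propeties 2}) gives $\pi_{R_\theta^n(t)}\circ\sigma^n=U_t^n\circ\pi_t$ on $X_t$, so
$$\pi_{R_\theta^n(t)}\bigl(\sigma^n(\mu|_{[x_1^n]})\bigr)\;=\;U_t^n\bigl(\pi_t(\mu|_{[x_1^n]})\bigr).$$
It then suffices to prove $\pi_t(\mu|_{[x_1^n]})=\pi_t\mu|_{\mathcal{A}_n^t(\pi_t(x))}$ with matching normalizing constants, and this reduces to the modulo-null set-theoretic equality $[x_1^n]=\pi_t^{-1}(\mathcal{A}_n^t(\pi_t(x)))$. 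The inclusion $\subset$ is immediate from part (1); for the reverse, any extra point $y\in\pi_t^{-1}(\mathcal{A}_n^t(\pi_t(x)))\setminus[x_1^n]$ must have $\pi_t(y)$ on the boundary of its own cell (the standard multiple-address phenomenon for IFS codings). Since $\pi_t$ is countable-to-one under the convex open set condition and $\mu$ is non-atomic, the pushforward $\pi_t\mu$ is non-atomic, and the countable set $\bigcup_{n\ge 1}\partial\mathcal{A}_n^t$ has $\pi_t\mu$-mass zero; equivalently, for $\mu$-a.e.\ $x$ the point $\pi_t(x)$ lies in the interior of $\mathcal{A}_n^t(\pi_t(x))$ for every $n\ge 1$, and all the preceding identifications hold simultaneously.

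The only real technical subtlety is the boundary bookkeeping: once non-atomicity of $\mu$ is transferred, via the countable-to-one nature of $\pi_t$, to the simultaneous-in-$n$ vanishing of $\mu$ on the pre-image of the boundary points, the rest is a direct unravelling of the IFS composition and of the definition of $\pi_t$.
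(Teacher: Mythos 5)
Your proposal follows essentially the same route as the paper's proof: part (1) by direct unravelling of the two codings (the paper simply declares this part clear), and part (2) via the intertwining identity $\pi_{R_\theta^n(t)}\circ\sigma^n=U_t^n\circ\pi_t$, the reduction to $\pi_t(\mu|_{[x_1^n]})=\pi_t\mu|_{\mathcal{A}_n^t(\pi_t(x))}$, and a boundary-nullity argument based on non-atomicity of $\mu$.

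One imprecision is worth flagging in your treatment of the null set. The set $\bigcup_{n\ge1}\partial\mathcal{A}_n^t$, with boundaries taken relative to $K$ as the paper stipulates, is \emph{not} countable: the boundary of a cell of $\mathcal{A}_n^t$ relative to $K=C_\alpha\times C_\beta$ contains sets of the form $\{pt\}\times C_\beta$ and $C_\alpha\times\{pt\}$, which are uncountable. So the inference ``non-atomic plus countable implies null'' does not apply to that set as you state it. What rescues the argument --- and what the paper's proof uses explicitly --- is that $\pi_t\mu$ is supported on a single slice $K\cap l_{\beta^{-t},z}$, and a line meets the boundary of each cell in at most two points; hence ${\rm supp}(\pi_t\mu)\cap\bigcup_{n\ge1}\partial\mathcal{A}_n^t$ is countable, and non-atomicity of $\pi_t\mu$ (which, as you correctly note, follows from non-atomicity of $\mu$ together with the finite-to-one nature of $\pi_t$ under the convex open set condition) gives the required nullity. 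With that restriction to the support inserted, your argument is complete and coincides with the paper's.
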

\begin{proof}
The part (1) is clear, we only need to prove (2). By definition,  $\pi_t\mu$ is a measure supported on some slice of $K$ with the form $K\cap \ell_{\beta^{-t},z}$ for some $z\in K$. It is clear that, for all $n\ge1$ and each element $A$ of $\mathcal{A}_n^t$, the support of  $\pi_t\mu$ intersects the boundary of $A$ in at most two points. Since $\mu $ is non-atomic, it follows that $\pi_t\mu$ gives zero measure to the boundary of $A$. Thus for $\mu$-a.e. $x$ and $n\ge1$, 
$$\pi_t(\mu|_{[x_1^n]})=\pi_t\mu|_{\mathcal{A}_n^t(\pi_t(x))}.$$
Note that for $t\in [0,1)$ and $x\in X_t$, we have 
$$U^n(\pi_t(x),t)=(U_t^n(\pi_t(x)),R_\theta^n(t))=(\pi_{R_\theta^n(t)}(\sigma^n(x)),R_\theta^n(t)).$$
Combining the above conclusions, we obtain \eqref{eq: special measure 8}.
\end{proof}

\subsection{Construction of a class of $U$-invariant measures}
\label{subsection construction of a class of U-inv measures}

This subsection is devoted to the construction of a class of $U$-invariant measures. We will first define these measures and then show that they are $U$-invariant.

Let $Q$ be the $\hat{M}$-invariant distribution constructed in  Subsection \ref{subsection construction of CP-dist 1}. 
Recall that $Q=\int Q^{(\mu,x,t)}d Q(\mu,x,t)$ is the ergodic decomposition of $Q$.
By the ergodic theorem, for $Q$-a.e. $(\mu,x,t)$,  the triple $(\mu,x,t)$ generates $Q^{(\mu,x,t)}$ in the sense that
\begin{equation}\label{eq: special measure 1}
\frac{1}{N}\sum_{n=0}^{N-1}\delta_{\hat{M}^n(\mu,x,t)}\to Q^{(\mu,x,t)} \ \textrm{   as } N\to\infty
\end{equation}
in the weak-* topology.
Consider the map $G:\Xi_{\mathcal{F}}\to \mathcal{P}(K\times [0,1))$ defined by
$$G(\mu,x,t)=\pi_t\mu\times \delta_t.$$
Then $G$ is continuous. It follows from \eqref{eq: special measure 1} that 
for $Q$-a.e. $(\mu,x,t)$, 
\begin{equation}\label{eq: special measure 2}
\frac{1}{N}\sum_{n=0}^{N-1}G(\hat{M}^n(\mu,x,t))\to \int GdQ^{(\mu,x,t)} \ \textrm{ as } N\to\infty. 
\end{equation}
Recall that by the definition of $\hat{M}$, we have 
\begin{equation}\label{eq: special measure 3}
\hat{M}^n(\mu,x,t)=\left(\mu^{[x_1^n]},\sigma^n(x),R_\theta^n(t)\right).
\end{equation}

We use $Q_{1,3}$ and $Q^{(\mu,x,t)}_{1,3}$ to denote, respectively, the marginals of $Q$ and $Q^{(\mu,x,t)}$ on the first and third coordinates. Recall the definition of $\mathcal{E}_\gamma$ (see \eqref{eq: def mathcal E gamma}).
\begin{lemma}\label{lemma generic measure convergence 1}
For $Q_{1,3}$-a.e. $(\mu,t)$ and $\mu$-a.e. $x$ with $(\mu,x,t)\in \mathcal{E}_\gamma$, we have
\begin{equation}\label{eq: special measure 9}
\frac{1}{N}\sum_{n=0}^{N-1}(\pi_t\mu)^{\mathcal{A}_n^t(\pi_t(x))}\times \delta_{R_\theta^n(t)}\to \nu^{(\mu,x,t)}:=\int \pi_s\vartheta\times \delta_{s} \ dQ^{(\mu,x,t)}_{1,3}(\vartheta,s)  \  \textrm{ as } N\to\infty
\end{equation}
in the  weak-* topology.
\end{lemma}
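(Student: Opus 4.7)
The plan is to interpret the Cesàro sum on the left of (5.9) as a Birkhoff average for a continuous observable on $(\Xi_{\mathcal{F}}, \hat{M}, Q)$ and to conclude via the pointwise ergodic theorem together with the ergodic decomposition of $Q$.

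Define $G:\Xi_{\mathcal{F}}\to \mathcal{P}(K\times S^1)$ by $G(\nu,y,s)=\pi_s\nu\times \delta_s$; its continuity is immediate from part (1) of Lemma 4.1. The first step is to identify, for $Q$-a.e. $(\mu,x,t)$ and every $n\ge 0$, the $n$-th summand on the left of (5.9) with $G(\hat{M}^n(\mu,x,t))$. By (5.3), the coordinates of $\hat{M}^n(\mu,x,t)$ are $\sigma^n(\mu|_{[x_1^n]})/\mu([x_1^n])$, $\sigma^n(x)$, and $R_\theta^n(t)$. Applying Lemma 4.2(2) together with the definition (5.5) gives
$$\pi_{R_\theta^n(t)}\!\left(\sigma^n(\mu|_{[x_1^n]})/\mu([x_1^n])\right) = (\pi_t\mu)^{\mathcal{A}_n^t(\pi_t(x))},$$
so that $G(\hat{M}^n(\mu,x,t)) = (\pi_t\mu)^{\mathcal{A}_n^t(\pi_t(x))}\times \delta_{R_\theta^n(t)}$, as desired.

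Next, since $Q$ is $\hat{M}$-invariant and $G$ is continuous, the pointwise ergodic theorem combined with the ergodic decomposition $Q=\int Q^{(\mu,x,t)}\, dQ(\mu,x,t)$ gives, for $Q$-a.e. $(\mu,x,t)$,
$$\frac{1}{N}\sum_{n=0}^{N-1}G(\hat{M}^n(\mu,x,t))\to \int G\, dQ^{(\mu,x,t)}$$
in the weak-* topology. Because $G(\vartheta,y,s)$ depends only on $(\vartheta,s)$, the right-hand side simplifies to $\int \pi_s\vartheta\times \delta_s\, dQ^{(\mu,x,t)}_{1,3}(\vartheta,s)=\nu^{(\mu,x,t)}$. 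Finally, the global adaptedness of $Q$ rephrases this $Q$-a.s. conclusion as the statement for $Q_{1,3}$-a.e. $(\mu,t)$ and $\mu$-a.e. $x$, which is precisely (5.9).

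The principal technical hurdle is justifying that $\mu$ is non-atomic for $Q$-a.e. $(\mu,x,t)\in \mathcal{E}_\gamma$, which is required to invoke Lemma 4.2(2). For such $(\mu,x,t)$, the ergodic component $Q^{(\mu,x,t)}_{1,2}$ is an ergodic CP-distribution of dimension at least $\gamma>0$, so by Proposition 3.7 its typical measures are exact-dimensional and hence non-atomic. Using Lemma 3.5, which guarantees that for $Q_{1,2}$-a.e. $(\mu,x)$ the measure $\mu$ generates its ergodic component in the sense of (3.8), one can combine generator-typicality with the full $Q^{(\mu,x,t)}_1$-measure of non-atomic measures to pin $\mu$ itself down as non-atomic outside a $Q$-null subset of $\mathcal{E}_\gamma$. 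Apart from this subtlety, the argument is a direct application of Birkhoff's theorem.
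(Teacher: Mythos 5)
Your argument is correct and follows essentially the same route as the paper: identify the $n$-th summand with $G(\hat M^n(\mu,x,t))$ via Lemma \ref{lemma basic partition 2}(2) and the definition \eqref{eq: def magnification measure 1}, apply the pointwise ergodic theorem to the ergodic components of $Q$, and invoke global adaptedness to restate the $Q$-a.e.\ conclusion in the form of the lemma. The one step to make fully precise is the non-atomicity transfer: non-atomicity is not an open condition in the weak-* topology, so rather than deducing it directly from genericity plus the full $Q^{(\mu,x,t)}_1$-measure of non-atomic measures, one should argue through the open sets of non-concentrated measures as in \eqref{eq: prop non-concentration 1} of Proposition \ref{prop non-concentration and entropy} together with Remark \ref{remark prop non-concentration and entropy}, which is exactly what the paper does.
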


\begin{proof}
First, we claim that for $Q$-a.e. $(\mu,x,t)\in \mathcal{E}_\gamma$, the measure $\mu$ is non-atomic. To see this, recall that for $Q$-a.e. $(\mu,x,t)\in \mathcal{E}_\gamma$, the triple $(\mu,x,t)$ generates $Q^{(\mu,x,t)}$, and the marginal $Q^{(\mu,x,t)}_{1,2}$ is an ergodic CP-distribution with positive dimension. Let us fix any such  $(\mu,x,t)\in \mathcal{E}_\gamma$. Then $(\mu,x)$ generates the marginal $Q^{(\mu,x,t)}_{1,2}$, and it follows from Proposition \ref{prop non-concentration and entropy}, \eqref{eq: prop non-concentration 1} (and Remark \ref{remark prop non-concentration and entropy}) that $\mu$ is non-atomic.

Now, combining \eqref{eq: special measure 2}, \eqref{eq: special measure 3},  \eqref{eq: def magnification measure 1} and  part (2) of Lemma \ref{lemma basic partition 2}, we get \eqref{eq: special measure 9} for $Q$-a.e. $(\mu,x,t)\in \mathcal{E}_\gamma$. Since $Q$ is globally adapted, we deduce that \eqref{eq: special measure 9} holds for $Q_{1,3}$-a.e. $(\mu,t)$ and $\mu$-a.e. $x$ such that $(\mu,x,t)\in \mathcal{E}_\gamma$.
\end{proof}

We saw in the above proof that  formula \eqref{eq: special measure 9} actually holds  for $Q_{1,3}$-a.e. $(\mu,t)$ and $\mu$-a.e. $x$ with $H(Q^{(\mu,x,t)})>0$, but we
will not use this fact.

%From \eqref{eq: special measure 9}, we also deduce that for $Q_{1,3}$-a.e. $(\mu,t)$ and $\mu$-a.e. $x$ with $(\mu,x,t)\in \mathcal{E}_\gamma$, we have
%\begin{equation}\label{eq: special measure 10}
%\frac{1}{N}\sum_{n=0}^{N-1}(\pi_t\mu)^{\mathcal{A}_n^t(\pi_t(x))}\to \int\pi_s\vartheta \ dQ^{(\mu,x,t)}_{1,3}(\vartheta,s)=:P_1(\nu^{(\mu,x,t)}) \  \textrm{ weak-* as } N\to\infty,
%\end{equation}
%where $P_1: K\times [0,1)\to K$ is the projection defined by $P_1(z,t)=z$.
%{\color{blue}(To think about if we still really need this statement/formula \eqref{eq: special measure 10}.)}

%{\color{red}(WE NEED TO REORGANIZE THE ABOVE AND THE FOLLOWING MATERIALS; MAYBE WE CAN TRY TO PUT THE ABOVE PART AS A SUB-SUB-SECTION AND THE FOLLOWING PART AS ANOTHER ONE?)}

The rest of this subsection is devoted to the proof of the following.

\begin{proposition}\label{proposition invariance 1}
For $Q_{1,3}$-a.e. $(\mu,t)$ and $\mu$-a.e. $x$ with $(\mu,x,t)\in \mathcal{E}_\gamma$, the measure $\nu^{(\mu,x,t)}$ is $U$-invariant.
\end{proposition}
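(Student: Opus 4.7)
The plan is to leverage the $\hat{M}$-invariance and global adaptedness of the ergodic component $Q^{(\mu,x,t)}$, combined with the semi-conjugacy $\pi_{R_\theta(s)}\circ\sigma=\Phi_s\circ\pi_s$ on $X_s$ established in the proof of Lemma \ref{lemma basic propeties 2}. Roughly, global adaptedness converts the averaging $\sum_a\vartheta([a])\vartheta^{[a]}$ that appears naturally under the magnification $\hat{M}$ back into the shift $\sigma\vartheta$, and the semi-conjugacy then translates this shift on the symbolic side into the map $\Phi_s$ on the geometric side, which is precisely the non-trivial component of $U$. Throughout, I would work with triples $(\mu,x,t)\in\mathcal{E}_\gamma$ for which the representation $\nu^{(\mu,x,t)}=\int \pi_s\vartheta\times\delta_s\,dQ^{(\mu,x,t)}_{1,3}(\vartheta,s)$ supplied by Lemma \ref{lemma generic measure convergence 1} is in force.

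Concretely, fix a bounded continuous test function $f:K\times S^1\to\R$ and define a bounded Borel function $F:Y\to\R$ by
\[
F(\vartheta,x',s)=\int f(z,s)\,d(\pi_s\vartheta)(z).
\]
Since $F$ does not depend on $x'$, the representation of $\nu^{(\mu,x,t)}$ above gives $\int F\,dQ^{(\mu,x,t)}=\int f\,d\nu^{(\mu,x,t)}$. On the other hand, $\hat{M}$-invariance of $Q^{(\mu,x,t)}$ yields
\[
\int F\,dQ^{(\mu,x,t)}=\int F\circ\hat{M}\,dQ^{(\mu,x,t)}=\int\!\!\int f(z,R_\theta(s))\,d\pi_{R_\theta(s)}\vartheta^{[x'_1]}(z)\,dQ^{(\mu,x,t)}(\vartheta,x',s).
\]
Next I would invoke global adaptedness: conditional on $(\vartheta,s)$, the point $x'$ is distributed according to $\vartheta$, so integrating out $x'$ and using the identity $\sum_a\vartheta([a])\vartheta^{[a]}=\sigma\vartheta$ together with linearity of push-forward turns the last expression into
\[
\int\!\!\int f(z,R_\theta(s))\,d\pi_{R_\theta(s)}(\sigma\vartheta)(z)\,dQ^{(\mu,x,t)}_{1,3}(\vartheta,s).
\]
Applying $\pi_{R_\theta(s)}\circ\sigma=\Phi_s\circ\pi_s$ (valid pointwise on $X_s$, and hence for push-forward measures) finally produces
\[
\int\!\!\int f(\Phi_s(z),R_\theta(s))\,d\pi_s\vartheta(z)\,dQ^{(\mu,x,t)}_{1,3}(\vartheta,s)=\int f\,dU\nu^{(\mu,x,t)},
\]
so $\int f\,d\nu^{(\mu,x,t)}=\int f\,dU\nu^{(\mu,x,t)}$ for every bounded continuous $f$, proving $U\nu^{(\mu,x,t)}=\nu^{(\mu,x,t)}$.

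I expect the main technical point to be the rigorous justification of global adaptedness for the ergodic components $Q^{(\mu,x,t)}$: the passage from global adaptedness of $Q$ to that of its ergodic components on the richer space $Y=\mathcal{P}(X)\times X\times S^1$ is the direct analogue of Proposition \ref{prop ergodic decomposition} and should be handled by the same disintegration argument adapted to include the $S^1$ factor. A secondary point is the Borel measurability and boundedness of the auxiliary functional $F$, which follows from Lemma \ref{lemma basic propeties 1}(1) guaranteeing continuity of $s\mapsto \pi_s\vartheta$ in $d_\theta$. Once these are in place, the argument reduces to the formal manipulation above, which makes precise the link between the symbolic dynamics $(\hat{M},Q^{(\mu,x,t)})$ and the geometric dynamics $(U,\nu^{(\mu,x,t)})$.
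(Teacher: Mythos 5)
Your argument is correct in substance but follows a genuinely different route from the paper. The paper does not compute with $\hat M$-invariance directly; instead it proves (via a martingale-difference argument, Lemma \ref{lemma invariance 1}, and a non-concentration estimate on partition boundaries, Lemma \ref{lemma invariance non-concentration}) that $\nu^{(\mu,x,t)}$ is the weak-* limit of the orbital averages $\frac1N\sum_{n<N}\delta_{U^n(\pi_t(x),t)}$ (Lemma \ref{lemma invariance 2}), and then deduces invariance from the general principle that such a limit is invariant provided it does not charge the discontinuity set of $U$. Your computation --- pushing $\hat M$-invariance through the test functional $F(\vartheta,x',s)=\int f(z,s)\,d\pi_s\vartheta(z)$, using adaptedness to replace $\vartheta^{[x_1']}$ averaged over $x_1'\sim\vartheta$ by $\sigma\vartheta$, and then the semi-conjugacy $\pi_{R_\theta(s)}\circ\sigma=\Phi_s\circ\pi_s$ --- is shorter and isolates exactly why invariance holds; it is essentially the ``expected value'' shadow of the paper's pointwise equidistribution statement. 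What the paper's longer route buys is information that is reused later: the equidistribution of $U$-orbits of $\pi_t\mu$-typical points and the fact that $\nu^{(\mu,x,t)}(\partial B)=0$ for all $B\in\mathcal B_n$ are both invoked again (e.g.\ in the proof of Proposition \ref{proposition good geometric picture 1}), so your proof would not let the paper dispense with Lemmas \ref{lemma invariance 1}--\ref{lemma invariance 2}. Two points in your write-up need care. First, you correctly identify that you need global adaptedness of the ergodic components $Q^{(\mu,x,t)}$ on $Y$, which the paper never states (it only records adaptedness of the $(\mu,x)$-marginal via Proposition \ref{prop ergodic decomposition} and global adaptedness of $Q$ itself); the extension of the disintegration argument to the skew product over $S^1$ is routine but must actually be carried out, since it is the load-bearing step of your proof. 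Second, the identity $\pi_{R_\theta(s)}\circ\sigma=\Phi_s\circ\pi_s$ can fail at the (at most countably many) points whose images lie on the boundaries of the intervals $I_\alpha^i\times I_\beta^j$, where the coding is ambiguous; this is harmless because $Q^{(\mu,x,t)}$-typical $\vartheta$ is non-atomic (it is exact dimensional of positive dimension), but you should say so, as the paper does in Lemma \ref{lemma basic partition 2}(2).
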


Our idea for the  proof of Proposition \ref{proposition invariance 1} is inspired by \cite[Theorem 2.1]{HS2015}
where it is shown, for a Borel map $T$ of a compact metric space $X$, how to relate the small-scale structure of a measure $\upsilon\in \mathcal{P}(X)$ to the distribution of $T$-orbits of $\upsilon$-typical points.

The proof of Proposition \ref{proposition invariance 1} relies on  three lemmas.
For any $(z,t)\in K\times [0,1)$, we define a sequence of measures 
$$\eta_N(z,t)=\frac{1}{N}\sum_{n=0}^{N-1}\delta_{U^n(z,t)}, \ \ N\ge1.$$ 
The first lemma shows that, for a given measure $\upsilon\in \mathcal{P}(K)$, when restricted on the elements of $\mathcal{B}_k , k\ge1$, the measures $\eta_N(z,t)$ and the Ces\`aro averages of $\upsilon^{\mathcal{A}_n^t(z)}\times \delta_{R^n_\theta(t)}$ are asymptotically the same for $\upsilon$-a.e. $z$.

\begin{lemma}\label{lemma invariance 1}
Let $\upsilon\in \mathcal{P}(K)$.
For any $t\in [0,1),$ $k\ge 1$ and  each $B\in \mathcal{B}_k$, we have 
$$\lim_{N\to\infty}\frac{1}{N}\sum_{n=0}^{N-1}\left(1_B(U^n(z,t))-\upsilon^{\mathcal{A}_n^t(z)}\times \delta_{R^n_\theta(t)}(B)\right)=0 \ \textrm{ for $\upsilon$-a.e. $z$.} $$ 
\end{lemma}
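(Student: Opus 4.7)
The plan is to exploit the product structure of $\mathcal{B}_k$, identify $\upsilon^{\mathcal{A}_n^t(z)}(A)$ as a conditional expectation, and combine an orthogonality observation at lag $k$ with a standard $L^2$--Borel--Cantelli argument. Fix $B\in \mathcal{B}_k$ and decompose it as $B=A\times C$ with $C\in \mathcal{C}_k$ and $A\in \mathcal{A}_k^{t'}$ for any $t'\in C$ (the partition $\mathcal{A}_k^{t'}$ depends only on the $\mathcal{C}_k$-cell of $t'$, so $A$ is unambiguous). Since $U^n(z,t)=(U_t^n(z),R_\theta^n(t))$, the summand factorizes as
$$1_B(U^n(z,t))-\upsilon^{\mathcal{A}_n^t(z)}\times \delta_{R_\theta^n(t)}(B)= 1_C(R_\theta^n(t))\left(1_A(U_t^n(z))-\upsilon^{\mathcal{A}_n^t(z)}(A)\right),$$
so only indices $n$ with $R_\theta^n(t)\in C$ contribute. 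Unwinding the definition \eqref{eq: def magnification measure 1}, one sees that $\upsilon^{\mathcal{A}_n^t(z)}(A)=E_\upsilon[1_A\circ U_t^n\mid \mathcal{A}_n^t](z)$, so that $Y_n(z):=1_A(U_t^n(z))-\upsilon^{\mathcal{A}_n^t(z)}(A)$ satisfies $|Y_n|\le 1$ and $E_\upsilon[Y_n\mid \mathcal{A}_n^t]=0$.

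Next I would establish block-orthogonality of the $Y_n$'s at lag $k$. Writing $A=\bigcap_{i=0}^{k-1}(U_{t'}^i)^{-1}(D_i)$ for cells $D_i$ of the level-one partition from \eqref{eq: def of partition level 1}, and using that $U_{t'}^i\circ U_t^n = U_t^{n+i}$ whenever $R_\theta^n(t)$ lies in the same $\mathcal{C}_k$-cell as $t'$, one obtains
$$(U_t^n)^{-1}(A)=\bigcap_{i=0}^{k-1}(U_t^{n+i})^{-1}(D_i),$$
which belongs to $\mathcal{A}_{n+k}^t$. Hence $1_C(R_\theta^n(t))Y_n$ is $\mathcal{A}_{n+k}^t$-measurable (and vanishes outside $\{R_\theta^n(t)\in C\}$). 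For $m\ge n+k$, the tower property then gives
$$E_\upsilon\!\left[1_C(R_\theta^n(t))Y_n\cdot 1_C(R_\theta^m(t))Y_m\right]=E_\upsilon\!\left[1_C(R_\theta^n(t))Y_n\cdot 1_C(R_\theta^m(t))\cdot E_\upsilon[Y_m\mid \mathcal{A}_m^t]\right]=0.$$

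Finally, let $T_N(z)=\frac{1}{N}\sum_{n=0}^{N-1}1_C(R_\theta^n(t))Y_n(z)$. Block-orthogonality at lag $k$ together with $|Y_n|\le 1$ yields $E_\upsilon[T_N^2]\le (2k-1)/N$. Chebyshev's inequality combined with the Borel--Cantelli lemma along the subsequence $N_j=j^2$ forces $T_{N_j}\to 0$ $\upsilon$-almost surely, and the uniform bound $|1_C(R_\theta^n(t))Y_n|\le 1$ permits interpolation between consecutive $N_j$'s with error $O(1/\sqrt{N_j})$, yielding $T_N(z)\to 0$ for $\upsilon$-a.e.\ $z$. The main technical delicacy is the compatibility $U_{t'}^i\circ U_t^n=U_t^{n+i}$ used in the second paragraph: this is the only place where the joint structure of the base rotation $R_\theta$ and the fiber action $U_t^n$ enters the argument, and it is precisely what ensures that $Y_n$ sits inside the filtration $\{\mathcal{A}_{n+k}^t\}_{n\ge 0}$, which in turn drives the decorrelation.
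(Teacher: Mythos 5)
Your proof is correct, and its skeleton is the same as the paper's: decompose $B=A\times C$, identify $\upsilon^{\mathcal{A}_n^t(z)}(A)$ as $\E_\upsilon(1_A\circ U_t^n\mid\mathcal{A}_n^t)(z)$, and observe that the resulting differences are centered with respect to $\mathcal{A}_n^t$ while being measurable with respect to $\mathcal{A}_{n+k}^t$. The only divergence is in how this structure is converted into almost-sure convergence of the Ces\`aro averages: the paper splits the indices into the $k$ residue classes mod $k$, notes that each subsequence is a bounded martingale-difference sequence for the filtration $\{\mathcal{A}_{nk+p}^t\}_n$, and cites Feller's strong law for such sequences, whereas you prove the needed statement by hand via block-orthogonality at lag $k$, the second-moment bound $\E_\upsilon[T_N^2]\le (2k-1)/N$, Chebyshev plus Borel--Cantelli along $N_j=j^2$, and interpolation. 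Your route is more self-contained (it avoids the external citation) at the cost of a few extra lines; it buys nothing in generality since the two arguments are essentially equivalent for bounded differences. One small point in your favour: you are explicit that the measurability of $1_C(R_\theta^n(t))Y_n$ with respect to $\mathcal{A}_{n+k}^t$ rests on the compatibility $U_{t'}^i\circ U_t^n=U_t^{n+i}$ when $R_\theta^n(t)$ and $t'$ share a $\mathcal{C}_k$-cell (and is vacuous otherwise, since the indicator kills the term), a point the paper's phrasing ``since $A\in\mathcal{A}_k^t$'' slightly glosses over.
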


\begin{proof}
Fix $k\ge1$ and let $B\in \mathcal{B}_k$.
Recall that we can write $B=A\times C$ with some $C\in \mathcal{C}_k$ and $A\in \mathcal{A}_k^{t'}$ for some $t'\in C$. Then $1_B(U^n(z,t))=1_A(U_t^n(z))1_C(R^n_\theta(t))$ and $\upsilon^{\mathcal{A}_n^t(z)}\times \delta_{R^n_\theta(t)}(B)=\upsilon^{\mathcal{A}_n^t(z)}(A)1_C(R^n_\theta(t))$. Observe that by the definition of $\upsilon^{\mathcal{A}_n^t(z)}$, we have 
$$\upsilon^{\mathcal{A}_n^t(z)}(A)=\E_\upsilon(1_A\circ U_t^n|\mathcal{A}_n^t)(z).$$

Let $f_n(z)=\E_\upsilon(1_A\circ U_t^n|\mathcal{A}_n^t)(z)1_C(R^n_\theta(t))-1_A(U_t^n(z))1_C(R^n_\theta(t))$.  Note that $f_n$ is bounded uniformly in $n$. We only need to prove that $\lim_{N\to\infty}\frac{1}{N}\sum_{n=0}^{N-1}f_n(z)=0$ for  $\upsilon$-a.e. $z$.
For this, it is sufficient to show that for certain $k'\ge1$ and each $p=0,\ldots,k'-1$ we have
$\lim_{N\to\infty}\frac{1}{N}\sum_{n=0}^{N-1}f_{nk'+p}(z)=0$ for  $\upsilon\textrm{-a.e. }z$.

Now, for each $n\ge1$, we have $\E_\upsilon(f_n|\mathcal{A}_n^t)=0$. By the definition of the partitions $\{\mathcal{A}_n^{t}\}_n$ (see \eqref{eq: definition partition A-n-t 1}), it is clear that there exists $k''\in \N$ such that for all $s,s'\in [0,1)$ and all $n\ge1$, $\mathcal{A}_{n+k''}^{s}$ refines $\mathcal{A}_{n}^{s'}$. Because of this and since $A\in \mathcal{A}_k^{t'}$, the map $1_A\circ U_t^n$ is $\mathcal{A}_{n+k'}^t$-measurable for $k'=k+k''$.  Thus $\{f_{nk'+p}\}_n$ is a sequence of bounded martingale differences for the filtration $\{\mathcal{A}_{nk'+p}^t\}_n$, from which we deduce that their Ces\`aro averages converge to 0 for $\upsilon$-a.e. $z$, see \cite[Theorem 3 in Chapter VII.9]{Feller1971book}.

\end{proof}

\begin{lemma}\label{lemma invariance non-concentration}
For $Q_{1,3}$-a.e. $(\mu,t)$ and $\mu$-a.e. $x$ with $(\mu,x,t)\in \mathcal{E}_\gamma$, we have: for any $k\ge 1$ and each $B\in \mathcal{B}_k$,
\begin{equation}\label{eq lemma invariance non-concentration 0}
\limsup_{N\to\infty}\frac{1}{N}\sum_{n=0}^{N-1}(\pi_t\mu)^{\mathcal{A}_n^t(\pi_t(x))}\times \delta_{R^n_\theta(t)}((\partial B)^{(\epsilon)})=o(1) \ \textrm{  as } \epsilon\to 0,
\end{equation}
where $E^{(\epsilon)}$ denotes the $\epsilon$-neighborhood of a set $E$.
\end{lemma}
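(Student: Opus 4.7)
The plan is to combine the weak-$*$ convergence from Lemma~\ref{lemma generic measure convergence 1} with a Portmanteau-type bound, thereby reducing everything to showing that $\nu^{(\mu,x,t)}(\partial B)=0$ for each $B\in\mathcal{B}_k$.

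I fix a pair $(\mu,t)$ and a point $x$ for which \eqref{eq: special measure 9} is valid (a full-measure condition by Lemma~\ref{lemma generic measure convergence 1}) and set
$$\mu_N=\frac{1}{N}\sum_{n=0}^{N-1}(\pi_t\mu)^{\mathcal{A}_n^t(\pi_t(x))}\times\delta_{R_\theta^n(t)},$$
so that $\mu_N\to\nu^{(\mu,x,t)}$ weakly-$*$. For $\epsilon>0$ the open set $(\partial B)^{(\epsilon)}$ is contained in the closed $\epsilon$-fattening $F_\epsilon:=\{y:d(y,\partial B)\le\epsilon\}$, so the closed-set half of the Portmanteau theorem gives
$$\limsup_{N\to\infty}\mu_N\bigl((\partial B)^{(\epsilon)}\bigr)\le\limsup_{N\to\infty}\mu_N(F_\epsilon)\le\nu^{(\mu,x,t)}(F_\epsilon).$$
Since $F_\epsilon\downarrow\partial B$ as $\epsilon\downarrow 0$ and $\mathcal{B}_k$ is finite, the lemma reduces to showing $\nu^{(\mu,x,t)}(\partial B)=0$ for every $B\in\mathcal{B}_k$.

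Writing $B=A\times C$ with $A\in\mathcal{A}_k^t$ and $C\in\mathcal{C}_k$, one has $\partial B\subset(\partial A\times S^1)\cup(K\times\partial C)$. The third-coordinate marginal $Q^{(\mu,x,t)}_{3}$ is $R_\theta$-invariant and ergodic, so by irrationality of $\theta$ it must be Lebesgue measure on $S^1$ (exactly as in the proof of Theorem~\ref{thm Furstenberg result 69}); as $\partial C$ is finite this yields $\nu^{(\mu,x,t)}(K\times\partial C)=0$. For the other piece, \eqref{eq: definition partition A-n-t 1} tells us that each $A\in\mathcal{A}_k^t$ has product form $A_\alpha\times A_\beta$, so $\partial A$ (inside $K$) is a finite union of horizontal and vertical segments. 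Any slice of $K$ with positive, finite slope $\beta^{-s}$ meets such a segment in at most one point, and therefore $\pi_s\vartheta(\partial A)=0$ as soon as $\pi_s\vartheta$ is non-atomic.

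The crux, and in my view the main obstacle, is exactly this non-atomicity of $\pi_s\vartheta$ for $Q^{(\mu,x,t)}_{1,3}$-a.e.\ $(\vartheta,s)$, and this is where the assumption $(\mu,x,t)\in\mathcal{E}_\gamma$ enters the argument: by construction $Q^{(\mu,x,t)}_{1,2}$ is an ergodic CP-distribution of dimension at least $\gamma>0$, so by Proposition~\ref{proposition exact dim and formula} a typical $\vartheta$ is exact dimensional with positive dimension and in particular non-atomic. The convex open set condition forces $\pi_s$ to have at most countable fibres, so $\pi_s\vartheta$ inherits non-atomicity. Integrating over $Q^{(\mu,x,t)}_{1,3}$ gives $\nu^{(\mu,x,t)}(\partial A\times S^1)=0$, concluding the plan. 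Everything apart from the non-atomicity input is routine bookkeeping around weak-$*$ convergence, ergodic decomposition, and unique ergodicity of an irrational rotation.
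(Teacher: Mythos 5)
Your proof is correct, and it takes a genuinely different route from the paper's. You invoke the already-established weak-$*$ convergence of the Ces\`aro averages to $\nu^{(\mu,x,t)}$ (Lemma \ref{lemma generic measure convergence 1}, whose proof does not depend on the present lemma, so there is no circularity), apply the closed-set half of Portmanteau, and reduce everything to $\nu^{(\mu,x,t)}(\partial B)=0$; this you verify by splitting $\partial B$ into the rotation part (killed by unique ergodicity of $R_\theta$, exactly as in the proof of Theorem \ref{thm Furstenberg result 69}) and the $\partial A\times S^1$ part (killed because a non-atomic slice measure with slope in $[1,\beta^{-1}]$ meets the finitely many horizontal and vertical segments of $\partial A$ in finitely many points). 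The paper instead never passes to the limit measure: it bounds the Ces\`aro averages directly, using equidistribution of $\{R_\theta^n(t)\}$ for the $(\partial C)^{(\epsilon)}$ part and, for the $(\partial A)^{(\epsilon)}$ part, the uniform non-concentration estimate of Proposition \ref{prop non-concentration and entropy} together with the geometric observation that the support of $(\pi_t\mu)^{\mathcal{A}_n^t(\pi_t(x))}$ meets $(\partial A)^{(\epsilon)}$ inside two balls of radius $O(\epsilon)$. Both arguments rest on the same two inputs (unique ergodicity of the rotation and positive dimension of the CP-typical measures); yours is shorter and handles all $k$ and $B$ on a single full-measure set in one stroke, while the paper's is more quantitative at each finite scale and more self-contained in that it does not route through properties of the limit object. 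Either version plugs into the proof of Lemma \ref{lemma invariance 2} unchanged.
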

\begin{proof}
By the global adaptedness of $Q$, we only need to show \eqref{eq lemma invariance non-concentration 0} for $Q$-a.e. $(\mu,x,t)\in \mathcal{E}_\gamma$.

Fix $k\ge1$ and let $B\in \mathcal{B}_k$.
Recall that $B=A\times C$ with  $C\in \mathcal{C}_k$ and $A\in \mathcal{A}_k^t$ for some $t\in C$. Observe that we have  $(\partial B)^{(\epsilon)}\subset \left(K\times (\partial C)^{(\epsilon)}\right)\bigcup \left((\partial A)^{(\epsilon)}\times [0,1)\right).$ Thus it is sufficient to show that for $Q$-a.e. $(\mu,x,t)\in \mathcal{E}_\gamma$,
\begin{equation}\label{eq: lemma invariance non-concentration 1}
\limsup_{N\to\infty}\frac{1}{N}\sum_{n=0}^{N-1} \delta_{R^n_\theta(t)}((\partial C)^{(\epsilon)})=o(1) \ \textrm{  as } \epsilon\to 0
\end{equation}
and 
\begin{equation}\label{eq: lemma invariance non-concentration 2}
\limsup_{N\to\infty}\frac{1}{N}\sum_{n=0}^{N-1}(\pi_t\mu)^{\mathcal{A}_n^t(z)}((\partial A)^{(\epsilon)})=o(1) \ \textrm{  as } \epsilon\to 0.
\end{equation}
The statement \eqref{eq: lemma invariance non-concentration 1} is clearly true. Actually, since $\theta$ is irrational, for any $t\in [0,1)$, the limsup in \eqref{eq: lemma invariance non-concentration 1} is a limit and it is bounded by the Lebesgue measure of  $(\partial C)^{(\epsilon)}$ which is $o(1)$ when $\epsilon\to 0$.

Now, let us prove \eqref{eq: lemma invariance non-concentration 2}. 
In the proof of Lemma  \ref{lemma generic measure convergence 1}, we have seen that for $Q$-a.e. $(\mu,x,t)\in \mathcal{E}_\gamma$,   $(\mu,x)$ generates an ergodic CP-distribution $Q^{(\mu,x,t)}_{1,2}$ with positive dimension.
It follows from Proposition \ref{prop non-concentration and entropy}, \eqref{eq: prop non-concentration 1} (and Remark \ref{remark prop non-concentration and entropy}) that for any $\epsilon>0$, there exists $n_0(\epsilon)\in \N$ such that 
\begin{equation}
\liminf_{N\to\infty}\frac{1}{N}\sharp \left\{1\le k\le N : \max_{u\in \Lambda^{n_0(\epsilon)}}\mu^{[x_1^k]}([u])\le \epsilon\right\}>1-\epsilon.
\end{equation}
Now, recalling $\pi_{R_\theta^k(t)}(\mu^{[x_1^k]})=(\pi_t\mu)^{\mathcal{A}_k^t(\pi_t(x))}$ and  using part (3) of Lemma \ref{lemma basic propeties 1}, we deduce that for any $\epsilon>0$ there exists $\delta(\epsilon)>0$  such that
\begin{equation}\label{eq: lemma invariance non-concentration 3}
\liminf_{N\to\infty}\frac{1}{N}\sharp \left\{1\le k\le N : \sup_{y\in K}(\pi_t\mu)^{\mathcal{A}_k^t(\pi_t(x))}(B(y,\delta(\epsilon)))\le \epsilon
\right\}>1-\epsilon.
\end{equation} 
By definition, all elements in $\mathcal{A}_n^t$ have uniformly bounded eccentricities\footnote{The eccentricity of a rectangle is the ratio of the lengths of the longest and shortest side. Here we are actually referring to the eccentricity of the convex hull of $\mathcal{A}_n^t$ but not itself, since $\mathcal{A}_n^t$ is in general a Cantor set.} (less than $1/\beta$). On the other hand, the measure $(\pi_t\mu)^{\mathcal{A}_n^t(\pi_t(x))}$ is  supported on some slice of $K$ with slope between $1$ and $1/\beta$. Hence there exists an absolute constant depending only on $\beta$ such that for any $A\in\mathcal{A}_n^t$, the intersection of the support of $(\pi_t\mu)^{\mathcal{A}_n^t(\pi_t(x))}$ with $(\partial A)^{(\epsilon)}$ is included in two balls of diameter less than $\epsilon$ times this constant. Combining this fact with \eqref{eq: lemma invariance non-concentration 3}, we get \eqref{eq: lemma invariance non-concentration 2}.
\end{proof}

The following lemma says that the measures $\eta_N(z,t)$ and the Ces\`aro averages of $(\pi_t\mu)^{\mathcal{A}_n^t(\pi_t(x))} \times \delta_{R_\theta^n(t)}$ are asymptotically the same for typical $(\mu,x,t)$ in $\mathcal{E}_\gamma$.

\begin{lemma}\label{lemma invariance 2}
For $Q_{1,3}$-a.e. $(\mu,t)$ and $\mu$-a.e. $x$ with $(\mu,x,t)\in \mathcal{E}_\gamma$, we have
$$\eta_N(\pi_t(x),t) \to \nu^{(\mu,x,t)} \ \textrm{  as } N\to\infty$$
in the weak-*  topology.
\end{lemma}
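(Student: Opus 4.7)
The plan is to combine Lemma \ref{lemma invariance 1}, Lemma \ref{lemma invariance non-concentration}, and Lemma \ref{lemma generic measure convergence 1}: first compare $\eta_N(\pi_t(x),t)$ cell-by-cell with the Cesàro averages
$$\Sigma_N := \frac{1}{N}\sum_{n=0}^{N-1}(\pi_t\mu)^{\mathcal{A}_n^t(\pi_t(x))}\times \delta_{R_\theta^n(t)},$$
which already converge weakly to $\nu^{(\mu,x,t)}$ by Lemma \ref{lemma generic measure convergence 1}, and then upgrade this cell-wise convergence to weak-$*$.

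The first step is to apply Lemma \ref{lemma invariance 1} with $\upsilon = \pi_t\mu$. For each fixed $B \in \mathcal{B}_k$ it yields
$$\eta_N(\pi_t(x),t)(B) - \Sigma_N(B) \longrightarrow 0$$
for $\pi_t\mu$-a.e.\ $z=\pi_t(x)$, hence for $\mu$-a.e.\ $x$; since $\bigcup_{k\ge 1}\mathcal{B}_k$ is countable, a single $\mu$-null exceptional set handles every such $B$ simultaneously. Next I would show each $B\in\mathcal{B}_k$ is a continuity set of $\nu^{(\mu,x,t)}$. Lemma \ref{lemma invariance non-concentration} gives $\limsup_N \Sigma_N((\partial B)^{(\epsilon)})= o(1)$ as $\epsilon \to 0$, and since $(\partial B)^{(\epsilon)}$ is open, the portmanteau theorem applied to $\Sigma_N \to \nu^{(\mu,x,t)}$ yields $\nu^{(\mu,x,t)}((\partial B)^{(\epsilon)}) \le \liminf_N \Sigma_N((\partial B)^{(\epsilon)})$; letting $\epsilon\to 0$ forces $\nu^{(\mu,x,t)}(\partial B)=0$. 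Consequently $\Sigma_N(B)\to \nu^{(\mu,x,t)}(B)$, and combining with the previous display,
$$\eta_N(\pi_t(x),t)(B) \longrightarrow \nu^{(\mu,x,t)}(B) \quad \text{for every } B\in \bigcup_{k\ge 1}\mathcal{B}_k.$$

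To pass from cell-wise to weak-$*$ convergence, I would observe that the mesh of $\mathcal{B}_k$ tends to $0$: the $x$-cells of $\mathcal{A}_k^t$ have diameter $O(\alpha^k)$, the $y$-cells diameter $O(\beta^{r_k(t)})$ with $r_k(t)\to \infty$, and the $S^1$-partition $\mathcal{C}_k$ has maximal cell size vanishing because $\theta$ is irrational. Given $f\in C(K\times S^1)$ and $\epsilon>0$, choose $k$ large enough so that the oscillation of $f$ on every $B\in\mathcal{B}_k$ is $<\epsilon$, set $\tilde f = \sum_{B\in \mathcal{B}_k} f(p_B)\,1_B$ for a choice of representatives $p_B \in B$, so that $\|f-\tilde f\|_\infty < \epsilon$; the cell-wise convergence gives $\int \tilde f\,d\eta_N \to \int \tilde f\,d\nu^{(\mu,x,t)}$, and a three-$\epsilon$ argument completes the proof. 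The main technical obstacle is precisely the boundary handling in the middle step: without Lemma \ref{lemma invariance non-concentration} the weak-$*$ convergence of $\Sigma_N$ would fail to descend to the specific indicators $1_B$, breaking the cell-by-cell bridge from $\Sigma_N$ to $\eta_N$ supplied by Lemma \ref{lemma invariance 1}.
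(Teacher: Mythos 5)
Your proof is correct and follows essentially the same route as the paper's: the same three ingredients (Lemma \ref{lemma invariance 1} for the cell-by-cell comparison, Lemma \ref{lemma invariance non-concentration} for the boundary control, and the weak-$*$ convergence of the Ces\`aro averages from Lemma \ref{lemma generic measure convergence 1}) are combined in the same way. The only cosmetic difference is that you verify $\nu^{(\mu,x,t)}(\partial B)=0$ directly and spell out the cell-wise-to-weak-$*$ upgrade, whereas the paper runs the same boundary estimate on subsequential limits of $\eta_N$ and cites the upgrade as ``well known.''
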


\begin{proof}
By the definition of $\{\mathcal{B}_n\}_n$, it is clear that $\max_{B\in \mathcal{B}_n}{\rm diam}(B)\to 0$ as $n\to\infty$. So the partitions $\{\mathcal{B}_n\}_n$ generate the Borel $\sigma$-algebra of $K\times [0,1)$. Now by this fact and Lemma \ref{lemma invariance 1},  it is well known that for proving Lemma \ref{lemma invariance 2} we only need to show the following: for $Q_{1,3}$-a.e. $(\mu,t)$ and $\mu$-a.e. $x$ with $(\mu,x,t)\in \mathcal{E}_\gamma$, whenever $\eta_{N_k}(\pi_t(x),t) \to \upsilon$ along some $N_k\to\infty$, then $\upsilon(\partial B)=0$ for each $B\in \mathcal{B}_n$ and all $n\ge1$. For this, we use Lemma \ref{lemma invariance non-concentration}. Fix any $n_0\ge1$ and $B\in \mathcal{B}_{n_0}$. For any $\epsilon>0$, let $f_\epsilon\in C(K\times [0,1))$ be such that $1_{\partial B}\le f_\epsilon\le 1_{(\partial B)^{(\epsilon)}}$. Since $\max_{B\in \mathcal{B}_k}{\rm diam}(B)\to 0$ as $k\to\infty$, for $n$ large enough we can find a finite family $\{B_i\}\subset \mathcal{B}_n$ such that $(\partial B)^{(\epsilon)}\subset \cup_iB_i\subset (\partial B)^{(2\epsilon)}$. 
Now if $\eta_{N_k}(\pi_t(x),t) \to\upsilon$, then by Lemma \ref{lemma invariance 1} and Lemma \ref{lemma invariance non-concentration}, we have
\begin{eqnarray*}
\int f_\epsilon d\upsilon=\lim_{k\to\infty}\frac{1}{N_k}\sum_{n=0}^{N_k-1}f_\epsilon(U^n(\pi_t(x),t))  & \le & \limsup_{N\to\infty}\frac{1}{N}\sum_{n=0}^{N-1}(\pi_t\mu)^{\mathcal{A}_n^t(\pi_t(x))}\times \delta_{R^n_\theta(t)}(\cup_iB_i) \\
 & \le & \limsup_{N\to\infty}\frac{1}{N}\sum_{n=0}^{N-1}(\pi_t\mu)^{\mathcal{A}_n^t(\pi_t(x))}\times \delta_{R^n_\theta(t)}((\partial B)^{(2\epsilon)}) \\
 &=& o(1) \ \textrm{  as } \epsilon\to 0.
\end{eqnarray*}
This implies that $\upsilon(\partial B)=0$.
\end{proof}

We are now ready to prove Proposition \ref{proposition invariance 1}.
\begin{proof}[Proof of Proposition \ref{proposition invariance 1}]

By Lemma \ref{lemma invariance 2}, for $Q_{1,3}$-a.e. $(\mu,t)$ and $\mu$-a.e. $x$ with $(\mu,x,t)\in \mathcal{E}_\gamma$, $\nu^{(\mu,x,t)}$ is a measure according to which certain orbit $\{U^n(z,t)\}_n$ equidistributes.  Thus for proving the $U$-invariance of $\nu^{(\mu,x,t)}$, we only need to show that it gives zero measure to the set of discontinuities of $U$. This is an immediate consequence of the fact that  the  discontinuities of $U$ are contained in the set $\bigcup_{B\in \mathcal{B}_1}\partial B$, since in the proof of Lemma \ref{lemma invariance 2} we have shown that $\nu^{(\mu,x,t)}$ gives zero measure to this set. 
\end{proof}

\smallskip

\subsection{Further properties of a $U$-invariant measure $\nu_\infty$}\label{subsection Properties and entropy of the measure}
From now on, let us fix an element $(\mu_0,x_0,t_0)\in \mathcal{E}_\gamma$ such that $Q^{(\mu_0,x_0,t_0)}_{1,2}$ is an  ergodic CP-distribution with dimension $\ge \gamma$ and the measure $$\nu_\infty:=\nu^{(\mu_0,x_0,t_0)}=\int \pi_s\mu\times \delta_{t} \ dQ^{(\mu_0,x_0,t_0)}_{1,3}(\mu,t)$$
is $U$-invariant.

Applying Proposition \ref{prop non-concentration and entropy} to the ergodic CP-distribution $Q^{(\mu_0,x_0,t_0)}_{1,2}$ we get: for any $\epsilon>0$, there exists $n_0(\epsilon)\in \N$ such that for $Q^{(\mu_0,x_0,t_0)}_{1}$-a.e. $\mu$ and $\mu$-a.e. $x$, 
\begin{equation}
\begin{split}
\liminf_{N\to\infty}\frac{1}{N}\sharp \bigg\{1\le k\le N : \max_{u\in \Lambda^{n_0(\epsilon)}}\mu^{[x_1^k]}([u])\le  \epsilon \  \textrm{ and } &\\
 H(\mu^{[x_1^k]},\mathcal{F}_n)\ge n(\gamma\log \alpha^{-1}-\epsilon)\bigg\}>&1-2\epsilon \ \textrm{ for all } n\ge n_0(\epsilon).
\end{split}
\end{equation}
Here we use $Q^{(\mu_0,x_0,t_0)}_{1}$ to denote the measure marginal of $Q^{(\mu_0,x_0,t_0)}_{1,2}$.
Now, using part (3) of Lemma \ref{lemma basic propeties 1}, we deduce that for any $\epsilon>0$ there exists $\delta(\epsilon)>0$ and $n_1(\epsilon)\in \N$ such that for $Q^{(\mu_0,x_0,t_0)}_{1}$-a.e. $\mu$ and $\pi_t\mu$-a.e. $z$,
\begin{equation}\label{eq: property nu-infty non-concentration and entropy 1}
\begin{split}
\liminf_{N\to\infty}\frac{1}{N}\sharp \bigg\{1\le k\le N : \sup_{y\in K}(\pi_t\mu)^{\mathcal{A}_k^t(z)}(B(y,\delta(\epsilon)))\le \epsilon \ & \textrm{ and } \\
 H((\pi_t\mu)^{\mathcal{A}_k^t(z)},\mathcal{D}_n)  \ge n(\gamma\log 2-2\epsilon)\bigg\}>&1-2\epsilon \ \textrm{ for all } n\ge n_1(\epsilon).
\end{split}
\end{equation}
In particular, the above property holds also for $Q^{(\mu_0,x_0,t_0)}_{1,3}$-a.e. $(\mu,t)$ and $\pi_t\mu$-a.e. $z$.
On the other hand, since the measure $\nu_\infty$ has the form $\int \pi_t\mu\times \delta_{t} \ dQ_{1,3}^{(\mu_0,x_0,t_0)}(\mu,t)$, selecting a pair $(z,t)$ according to $\nu_\infty$ can be done by first selecting a pair $(\mu,t)$ according to $Q_{1,3}^{(\mu_0,x_0,t_0)}$ and then selecting a point $z$ according to $\pi_t\mu$.  

It follows from the above discussions that  we have 
\begin{proposition}\label{proposition, property slice measure 1}
The measure $\nu_\infty$ satisfies the following property:
\begin{equation}\label{eq: property slice measure 1}
\begin{aligned}
&\textit{For any $\epsilon>0$, there are $\delta(\epsilon)>0$ and $n_1(\epsilon)$ such that for $\nu_\infty$-a.e. $(z,t)$, }\\
& \textit{we  can find $\mu\in\mathcal{P}(X)$ such that $\pi_t\mu\in \mathcal{P}(\ell\cap K)$ for some line $\ell$ with  }\\
& \textit{slope $\beta^{-t}$ and \eqref{eq: property nu-infty non-concentration and entropy 1} holds for $\pi_t\mu$ and $z$.}
\end{aligned}
\end{equation}
In particular, almost every ergodic component of $\nu_\infty$ still satisfies the property \eqref{eq: property slice measure 1}.
\end{proposition}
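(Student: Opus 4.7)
The statement is essentially a repackaging of the facts established in the three paragraphs immediately preceding it, so the proof plan amounts to assembling those ingredients and carefully verifying the transfer from the symbolic setting to the metric setting on $K$.

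First, I would apply Proposition \ref{prop non-concentration and entropy} to the ergodic CP-distribution $Q^{(\mu_0,x_0,t_0)}_{1,2}$, which by the choice of $(\mu_0,x_0,t_0)\in \mathcal{E}_\gamma$ has dimension at least $\gamma$. For each $\epsilon>0$, this yields an integer $n_0(\epsilon)$ such that for $Q^{(\mu_0,x_0,t_0)}_{1}$-a.e.\ $\mu$ and $\mu$-a.e.\ $x$, the proportion of scales $k\le N$ satisfying simultaneously the cylinder non-concentration bound $\max_{u\in \Lambda^{n_0(\epsilon)}}\mu^{[x_1^k]}([u])\le \epsilon$ and the entropy bound $H(\mu^{[x_1^k]},\mathcal{F}_n)\ge n(\gamma\log\alpha^{-1}-\epsilon)$ for all $n\ge n_0(\epsilon)$ has $\liminf>1-2\epsilon$.

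Next, I would transfer these symbolic bounds to the pushforwards under $\pi_t$. Part (3) of Lemma \ref{lemma basic propeties 1} provides a uniform covering comparison: a ball $B(y,\delta(\epsilon))\subset K$, for $\delta(\epsilon)$ chosen sufficiently small, can be covered by a uniformly bounded number of level-$n_0(\epsilon)$ cylinders in $X_t$. Hence the symbolic non-concentration of $\mu^{[x_1^k]}$ translates directly into ball non-concentration of $(\pi_t\mu)^{\mathcal{A}_k^t(z)}$, after recalling the identity $\pi_{R_\theta^k(t)}(\mu^{[x_1^k]})=(\pi_t\mu)^{\mathcal{A}_k^t(\pi_t(x))}$ from Lemma \ref{lemma basic partition 2}. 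For the entropy, I would compare the symbolic partition $\mathcal{F}_n$ on $X_t$ with the dyadic partition $\mathcal{D}_n$ on $K$ at comparable scales (matching $\alpha^n$ with $2^{-n}$ up to a multiplicative factor in $n$): these partitions, pulled back through the uniformly Lipschitz $\pi_t$, are $C$-equivalent in the sense preceding Lemma \ref{lemma almost continuity of entropy}, so applying that lemma and absorbing the additive constant into the $\epsilon$-slack for all $n\ge n_1(\epsilon)$ produces the dyadic bound $H((\pi_t\mu)^{\mathcal{A}_k^t(z)},\mathcal{D}_n)\ge n(\gamma\log 2-2\epsilon)$. This is exactly \eqref{eq: property nu-infty non-concentration and entropy 1}.

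Finally, I would pass from $Q^{(\mu_0,x_0,t_0)}_{1,3}$ to $\nu_\infty$ via the disintegration $\nu_\infty=\int \pi_t\mu\times\delta_t\,dQ^{(\mu_0,x_0,t_0)}_{1,3}(\mu,t)$. Sampling $(z,t)\sim \nu_\infty$ amounts to first sampling $(\mu,t)$ from $Q^{(\mu_0,x_0,t_0)}_{1,3}$ and then $z\sim \pi_t\mu$, so a property holding for $Q^{(\mu_0,x_0,t_0)}_{1,3}$-a.e.\ $(\mu,t)$ and $\pi_t\mu$-a.e.\ $z$ automatically holds for $\nu_\infty$-a.e.\ $(z,t)$, with the witnessing measure $\mu$ supplied by the fiber coordinate; the slice condition $\pi_t\mu\in \mathcal{P}(l\cap K)$ with slope $\beta^{-t}$ follows from the support of $Q$ on $\Xi_{\mathcal{F}}$. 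For the last sentence of the proposition, I would invoke the ergodic decomposition $\nu_\infty=\int\nu_\infty^{(z,t)}\,d\nu_\infty(z,t)$ with respect to $U$: any $\nu_\infty$-full-measure set is $\nu_\infty^{(z,t)}$-full-measure for $\nu_\infty$-a.e.\ $(z,t)$, and since the quantifiers $\epsilon, \delta(\epsilon), n_1(\epsilon)$ are fixed, a single countable intersection over $\epsilon=1/m$ preserves this property for almost every ergodic component.

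The only mildly delicate step is the quantitative comparison between the symbolic partition $\mathcal{F}_n$ on $X_t$ and the dyadic partition $\mathcal{D}_n$ on $K$, where I have to choose the scale so that the Lipschitz factor from part (3) of Lemma \ref{lemma basic propeties 1} only costs a bounded additive constant in entropy; this is handled cleanly by Lemma \ref{lemma almost continuity of entropy}, so I do not expect any genuine obstacle.
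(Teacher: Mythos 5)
Your proposal is correct and follows essentially the same route as the paper: apply Proposition \ref{prop non-concentration and entropy} to the ergodic CP-distribution $Q^{(\mu_0,x_0,t_0)}_{1,2}$, transfer the symbolic non-concentration and entropy bounds to $(\pi_t\mu)^{\mathcal{A}_k^t(z)}$ via Lemma \ref{lemma basic propeties 1}(3) and the identification $\pi_{R_\theta^k(t)}(\mu^{[x_1^k]})=(\pi_t\mu)^{\mathcal{A}_k^t(\pi_t(x))}$, and then use that sampling $(z,t)\sim\nu_\infty$ factors through first sampling $(\mu,t)\sim Q^{(\mu_0,x_0,t_0)}_{1,3}$ and then $z\sim\pi_t\mu$. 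Your extra detail on the $C$-equivalence of partitions and on the ergodic-decomposition argument for the final sentence only fills in steps the paper leaves implicit.
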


In the rest of this paper, we choose an ergodic component of $\nu_\infty$ which  satisfies the property \eqref{eq: property slice measure 1} and  still denote it by $\nu_\infty$. We have
thus proved the following:
\begin{theorem}\label{theorem property slice measure and entropy 1}
There exists a $U$-invariant ergodic measure $\nu_\infty$ which  satisfies the property \eqref{eq: property slice measure 1}.
\end{theorem}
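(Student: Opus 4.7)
The plan is to assemble pieces that are already in place. Proposition \ref{proposition invariance 1} supplies the $U$-invariance of the starting measure $\nu_\infty=\nu^{(\mu_0,x_0,t_0)}$; Proposition \ref{proposition, property slice measure 1} shows $\nu_\infty$ satisfies the slice property \eqref{eq: property slice measure 1} and, crucially, its last sentence asserts that \emph{almost every ergodic component of $\nu_\infty$ inherits this property}; and Proposition \ref{proposition positive entropy 1} gives $h(\nu_\infty,U)>0$. The only remaining task is to extract a single ergodic component that is simultaneously entropy-positive and slice-rich.

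To do this I would consider the ergodic decomposition $\nu_\infty=\int\nu_\infty^{(\omega)}\,d\nu_\infty(\omega)$ recorded in the remark following Proposition \ref{proposition positive entropy 1}. By the affinity of entropy under ergodic decomposition,
$$h(\nu_\infty,U)=\int h(\nu_\infty^{(\omega)},U)\,d\nu_\infty(\omega),$$
so the positivity of the left-hand side forces
$$\Omega_{\mathrm{ent}}:=\bigl\{\omega: h(\nu_\infty^{(\omega)},U)>0\bigr\}$$
to have strictly positive $\nu_\infty$-measure. Independently, by the final clause of Proposition \ref{proposition, property slice measure 1}, the set
$$\Omega_{\mathrm{slice}}:=\bigl\{\omega: \nu_\infty^{(\omega)}\text{ satisfies }\eqref{eq: property slice measure 1}\bigr\}$$
has full $\nu_\infty$-measure.

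Since $\nu_\infty(\Omega_{\mathrm{slice}})=1$ and $\nu_\infty(\Omega_{\mathrm{ent}})>0$, the intersection $\Omega_{\mathrm{ent}}\cap\Omega_{\mathrm{slice}}$ is nonempty. Fixing any $\omega_0$ in this intersection and relabelling $\nu_\infty:=\nu_\infty^{(\omega_0)}$ produces the desired $U$-invariant ergodic measure of positive entropy satisfying \eqref{eq: property slice measure 1}. There is no real obstacle: the only nontrivial point is that property \eqref{eq: property slice measure 1}, phrased as a pointwise a.e.\ condition on $(z,t)$, is preserved by a.e.\ ergodic component. This is routine, because for $\nu_\infty$-a.e.\ point $(z,t)$ is $\nu_\infty^{(\omega)}$-generic for its own ergodic component, so any set $N$ of $(z,t)$ with $\nu_\infty(N)=0$ satisfies $\nu_\infty^{(\omega)}(N)=0$ for $\nu_\infty$-a.e.\ $\omega$; that is precisely the content already used by the author to state the ``in particular'' part of Proposition \ref{proposition, property slice measure 1}.
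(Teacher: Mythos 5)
Your proposal is correct and follows essentially the same route as the paper: the author likewise combines Proposition \ref{proposition positive entropy 1} with the entropy affinity formula to get a positive-measure set of ergodic components with positive entropy, invokes the ``in particular'' clause of Proposition \ref{proposition, property slice measure 1} for the full-measure set of components satisfying \eqref{eq: property slice measure 1}, and picks a component in the intersection. Your closing justification of why a.e.\ ergodic component inherits the a.e.\ pointwise property is exactly the standard fact the paper relies on implicitly.
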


\medskip

\section{An ergodic theoretic result}\label{section ergodic results}

This section is devoted to the proof of the following theorem in ergodic theory.
Recall that a sequence $\{x_k\}_{k\in \N}\in [0,1)$ is called {\em uniformly distributed} (UD) if for any sub-interval $J$ of $[0,1)$ we have
$\lim_{N\to\infty} N^{-1}\sharp\left\{0\le k\le N-1:  x_k\in J\right\}=\mathcal{L}(J).$

\begin{theorem}\label{proposition ergodic results}
Let $(X,T,\mu)$ be an ergodic dynamical system. Let $\mathcal{A}$ be a generator with finite cardinality and let $\{\mathcal{A}_n\}_n$ be the filtration generated by $\mathcal{A}$ with respect to $T$ (see Section \ref{subsubsection Measure  entropy1}). Suppose that $\mu(\partial A)=0$ for each $A\in \mathcal{A}_n$ and all $n\ge1$. Let $\xi$ be an irrational number. For any $\epsilon>0$, there exists $n_2=n_2(\epsilon)\in \N$ such that for each $n\ge n_2$ we can find a disjoint family $\{C_i\}_{i=1}^{N(n,\epsilon)}$ of measurable subsets  $C_i\subset X$ satisfying the following properties: \begin{itemize}
\item[(1)] We have $\mu\left(\bigcup_iC_i\right)\ge 1-\epsilon$.

\item[(2)] For each $1\le i\le N(n,\epsilon)$, we have $\sharp\left\{A\in \mathcal{A}_n: C_i\bigcap A\neq \emptyset\right\}\le e^{ n \epsilon}$.

\item[(3)]  There exists another disjoint family $\{\widetilde{C}_i\}_{i=1}^{N(n,\epsilon)}$ of measurable subsets  $\widetilde{C}_i\subset X$ such that for each $1\le i\le N(n,\epsilon)$, we have $C_i\subset\widetilde{C}_i$ and  $\mu(C_i)\ge(1-\epsilon)\mu(\widetilde{C}_i)$,  and moreover, for $\mu$-a.e. $x$  the sequence $$\left\{R_\xi^k(0)\in [0,1): k\in \N \textrm{ and  } T^k(x)\in \widetilde{C}_i\right\}$$ is UD. Here $R_\xi$ is the irrational rotation map defined by $R_\xi(t)=t-\xi \mod 1$. 

\end{itemize}
\end{theorem}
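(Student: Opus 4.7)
The argument combines Sinai's factor theorem, Shannon--McMillan--Breiman (SMB), and a Wiener--Wintner type statement. Fix small $\delta = \epsilon/100$ throughout, and write $h=h(\mu,T)$.

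First, apply Sinai's factor theorem to produce a Bernoulli factor $\pi:(X,T,\mu)\to(Y,S,\nu)$ with entropy $h':=h(\nu,S)>h-\epsilon/4$. Since $\mathcal{A}$ is a generator, for each $\eta>0$ there exists $N=N(\eta)$ such that the canonical Bernoulli generator $\mathcal{A}'$ of $(Y,S,\nu)$ pulls back to a partition approximable within $\eta$ in the measure metric by an $\mathcal{A}_N$-measurable partition; absorbing this small error, we proceed as if $\pi^{-1}(\mathcal{A}'_n)$ is refined by $\mathcal{A}_{n+N-1}$. Apply SMB in both $X$ and $Y$: for $n$ large, the typicality sets $G_n=\{x:\mu(\mathcal{A}_{n+N}(x))\in [e^{-(n+N)(h+\delta)},e^{-(n+N)(h-\delta)}]\}$ and $G'_n=\{y:\nu(\mathcal{A}'_n(y))\in [e^{-n(h'+\delta)},e^{-n(h'-\delta)}]\}$ have measure $>1-\epsilon^2$, each being a union of the corresponding $n$-th level atoms.

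Enumerate the atoms of $\mathcal{A}'_n$ lying in $G'_n$ as $B_1,\ldots,B_{M_n}$, set $\widetilde{C}_i=\pi^{-1}(B_i)$, and $C_i=\widetilde{C}_i\cap G_n$. Since any $\mathcal{A}_n$-atom meeting $C_i$ must contain an $\mathcal{A}_{n+N}$-sub-atom inside $C_i\subset G_n$, the complexity estimate for (2) follows from
\[
\#\{A\in\mathcal{A}_n:A\cap C_i\neq\emptyset\}\leq \frac{\mu(\widetilde{C}_i)}{e^{-(n+N)(h+\delta)}} \leq e^{n(h-h'+2\delta)+N(h+\delta)}\leq e^{n\epsilon}
\]
for $n$ large. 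For (1), a Markov-type argument restricts attention to the subfamily $I$ of indices with $\mu(\widetilde{C}_i\cap G_n^c)\leq\epsilon\mu(\widetilde{C}_i)$: since $\sum_{i\notin I}\mu(\widetilde{C}_i)\leq \mu(G_n^c)/\epsilon \leq \epsilon$, one obtains $\sum_{i\in I}\mu(\widetilde{C}_i)\geq 1-2\epsilon$ and $\mu(C_i)\geq(1-\epsilon)\mu(\widetilde{C}_i)$ for $i\in I$, so $\mu(\bigcup_{i\in I} C_i)\geq 1-O(\epsilon)$. Rescaling the initial choice of $\epsilon$ gives (1).

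Uniform distribution in (3) is delivered by the weak mixing of $S$. By the Wiener--Wintner theorem applied to the indicator $1_{B_i}$, for $\nu$-a.e.\ $y$ and every $\lambda\in\R$ the average $N^{-1}\sum_{k<N}1_{B_i}(S^ky)e^{2\pi i k\lambda}$ converges to $\nu(B_i)$ if $\lambda\in\Z$ and to $0$ otherwise. Expanding a continuous $f:S^1\to\C$ in Fourier series and approximating, one concludes that $N^{-1}\sum_{k<N}1_{B_i}(S^ky)f(k\xi\bmod 1)\to\nu(B_i)\int f\,d\mathcal{L}$; dividing by the Birkhoff average of $1_{B_i}$, which tends to $\nu(B_i)>0$, yields uniform distribution of $\{k\xi\bmod 1:S^ky\in B_i\}$ for $\nu$-a.e.\ $y$. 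Since $\pi_*\mu=\nu$, this transfers to $\{k\xi\bmod 1:T^kx\in\widetilde{C}_i\}$ for $\mu$-a.e.\ $x$, and a countable intersection of null sets over $i$ and $n$ delivers (3). The main obstacle in this plan is the initial compatibility step: producing a Bernoulli factor whose generator is essentially $\mathcal{A}_N$-measurable, since without this alignment one cannot cleanly convert measure bounds into $\mathcal{A}_n$-atom counts. The hypothesis $\mu(\partial A)=0$ plays its role here, ensuring that $\mathcal{A}_n$-boundaries are $\mu$-negligible and the approximation schemes are stable; once the compatibility is set up, the complexity count is a direct SMB estimate and (3) is standard Wiener--Wintner.
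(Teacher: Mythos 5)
Your overall architecture (Sinai factor, Shannon--McMillan--Breiman, weak mixing of the Bernoulli factor for the uniform distribution in (3)) matches the paper's, and your treatment of (1) and of the UD statement via Wiener--Wintner is essentially sound. But the step you yourself flag as ``the main obstacle'' --- aligning the pulled-back Bernoulli partition with the filtration $\{\mathcal{A}_n\}$ --- is a genuine gap, not a technicality that can be ``absorbed.'' Sinai's theorem only gives a measurable factor map $\pi$, so $\pi^{-1}(\mathcal{A}')$ is merely approximable in measure by $\mathcal{A}_N$-measurable sets; when you pass to the $n$-fold join $\pi^{-1}(\mathcal{A}'_n)=\bigvee_{k=0}^{n-1}T^{-k}\pi^{-1}(\mathcal{A}')$, the symmetric-difference errors add up to order $n\eta$, which is not small for fixed $\eta$ and large $n$ (and you cannot let $N=N(\eta)$ depend on $n$, since the exponent $N(h+\delta)$ in your count must stay $o(n)$ uniformly over all $n\ge n_2$). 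Without exact refinement, $\widetilde{C}_i=\pi^{-1}(B_i)$ is not a union of $\mathcal{A}_{n+N}$-atoms, so the key assertion that every $\mathcal{A}_n$-atom meeting $C_i$ contains a full $\mathcal{A}_{n+N}$-sub-atom of $C_i$ fails, and with it the complexity bound in (2). The hypothesis $\mu(\partial A)=0$ does not repair this: in the paper it serves a different purpose entirely (via Portmanteau, to compare the Rohlin conditional measures $\mu_y(A)$ with $\mu(\pi^{-1}(B(y,r))\cap A)/\mu(\pi^{-1}(B(y,r)))$, which is what yields $\mu(C_i)\ge(1-\epsilon)\mu(\widetilde{C}_i)$).

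The paper avoids the alignment problem altogether by taking the Bernoulli factor with entropy \emph{exactly} $h(\mu,T)$ and working with the disintegration $\mu=\int\mu_y\,d\nu(y)$. Full entropy of the factor forces the conditional entropy of $\mathcal{A}$ over the factor to vanish, so the \emph{conditional} Shannon--McMillan--Breiman theorem gives $\mu_{\pi(x)}(\mathcal{A}_n(x))\ge e^{-n\delta}$ on a set of measure $>1-\delta$; since each $\mu_y$ is a probability measure, at most $e^{n\delta}$ atoms of $\mathcal{A}_n$ can meet the fiber $\pi^{-1}(y)$ inside that good set, which is exactly property (2) with no partition compatibility required. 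If you want to rescue your two-sided unconditional SMB count, you would in effect have to prove this conditional statement anyway; I recommend replacing your compatibility step with the conditional SMB argument.
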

\begin{remark}
The conclusion of the theorem holds without the condition that the generator $\mathcal{A}$ has finite cardinality, but we will not use this fact. Assuming the condition on $\mathcal{A}$ will make the proof shorter.
\end{remark}

We will use Sinai's  factor theorem in the proof of Theorem \ref{proposition ergodic results}.

\begin{theorem}[Sinai's factor theorem]
Let $(X, T,\mu) $ be an ergodic dynamical system. Then any Bernoulli shift $(\Sigma^\N,\sigma,\nu)$ with $h(\nu,\sigma)\le h(\mu,T)$ is a factor of $(X, T,\mu) $.
\end{theorem}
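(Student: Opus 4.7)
The plan is to exhibit a measurable partition $\mathcal{P}=\{P_s\}_{s\in\Sigma}$ of $X$ such that the coding map $\pi:X\to\Sigma^\N$ defined by $\pi(x)=(s_i)_{i\geq 0}$ with $T^ix\in P_{s_i}$ satisfies $\pi\circ T=\sigma\circ\pi$ and $\pi_*\mu=\nu$. Equivariance is automatic from the definition of $\pi$, so the task reduces to finding a $\mathcal{P}$ whose $(\mathcal{P},T)$-process has the same finite-dimensional distributions as the Bernoulli measure $\nu$, i.e.\ for every finite word $s_0\cdots s_{N-1}\in\Sigma^N$,
$$\mu\bigl(P_{s_0}\cap T^{-1}P_{s_1}\cap\cdots\cap T^{-(N-1)}P_{s_{N-1}}\bigr)=\nu([s_0\cdots s_{N-1}]).$$
I therefore restate the theorem as: some finite measurable partition of $X$ generates the given Bernoulli process under $T$.

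The core step is a \emph{copying lemma}: given any stationary finite-valued process $\xi$ on $\Sigma^\N$ with entropy $h(\xi)<h(\mu,T)$, and given $\epsilon>0$ and $N\in\N$, there exists a finite measurable partition $\mathcal{P}^\xi$ of $X$ whose $N$-block process under $T$ is within total-variation $\epsilon$ of the $N$-block distribution of $\xi$. To prove this, apply Rokhlin's lemma to produce a tower with base $F$ of height $M\gg N$, such that $F,TF,\ldots,T^{M-1}F$ are disjoint and their union has $\mu$-measure at least $1-\epsilon$. By Shannon--McMillan--Breiman applied to $\xi$, up to an $\epsilon$-fraction the $\xi$-mass on $\Sigma^M$ concentrates on roughly $e^{Mh(\xi)}$ typical words each of mass $e^{-Mh(\xi)+o(M)}$. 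The inequality $h(\xi)<h(\mu,T)$, together with ergodicity of $T$, leaves enough entropy room to partition the base $F$ into measurable subsets whose $\mu$-masses equal the $\xi$-masses of these typical words; one then labels each column over such a piece according to the coordinates of the indexing word, and labels the complement of the tower arbitrarily. The $N$-block marginal of the resulting $(\mathcal{P}^\xi,T)$-process matches that of $\xi$ on the bulk of the tower and differs only on an $O(\epsilon)$-set.

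Finally, I would iterate. Choose Bernoulli processes $\xi_k$ with $h(\xi_k)\uparrow h(\nu)\leq h(\mu,T)$ whose distributions converge weak-$\ast$ to $\nu$ on cylinders, and apply the copying lemma successively with parameters $N_k\to\infty$ and $\epsilon_k\to 0$. The construction must be carried out so that the partition distance $\sum_s\mu(P_s^{(k)}\triangle P_s^{(k+1)})$ is summable in $k$; the limit partition $\mathcal{P}:=\lim_k\mathcal{P}^{\xi_k}$ then exists and its $T$-process has the $N$-block distribution of $\nu$ for every $N$, hence equals $\nu$. The principal obstacle, and the technical heart of Sinai's original argument, is producing this Cauchy sequence: each stage must preserve the fit already achieved at shorter cylinders while improving it at longer ones, without consuming all of the available entropy slack. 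This interference is precisely what forces the strict inequality $h(\xi_k)<h(\mu,T)$ at each finite stage, and the extra room is exploited by a careful inductive combinatorial construction (or equivalently via the Ornstein theory of finitely determined processes) to guarantee both approximation and convergence.
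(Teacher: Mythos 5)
First, note that the paper does not prove this statement at all: it is quoted as a known theorem, with the proof attributed to Sinai's original papers and, for the non-invertible one-sided setting actually needed here, to Ornstein and Weiss (unilateral codings of Bernoulli systems). So the only question is whether your sketch constitutes a proof, and it does not. You correctly identify the classical strategy (code by a finite partition; match finite block distributions; build the partition by a copying lemma using Rokhlin towers and Shannon--McMillan--Breiman; iterate and pass to a limit partition), but the decisive step is exactly the one you defer: the inductive construction that makes the sequence of partitions Cauchy in the partition metric \emph{while} forcing the limiting $(\mathcal{P},T)$-process to have precisely the independent distribution $\nu$, not merely to be weak-$\ast$ close to it on longer and longer cylinders. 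Your own text concedes that this ``careful inductive combinatorial construction (or equivalently via the Ornstein theory of finitely determined processes)'' is the technical heart of Sinai's argument; without it, what you have is a roadmap of the known proof, not a proof. In particular, approximate matching of $N_k$-block distributions within $\epsilon_k$ plus summability of partition distances does not by itself yield exact independence of the limit process -- controlling the error at previously fixed block lengths while improving at longer ones is where all the work (and the entropy slack) goes, and nothing in the sketch shows this can be arranged; the boundary case $h(\nu,\sigma)=h(\mu,T)$ is especially delicate and is not addressed beyond the assertion that $h(\xi_k)\uparrow h(\nu)$ suffices.

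A second concrete gap: the statement (and the paper's application to the skew product $U$) concerns a possibly non-invertible transformation $T$ and a one-sided Bernoulli shift on $\Sigma^\N$. Your construction is the invertible, two-sided one: Rokhlin towers and column labelling produce, at best, a factor map onto a two-sided stationary process for an automorphism, and the passage to unilateral codings for endomorphisms is a genuinely separate difficulty -- this is precisely why the paper cites Ornstein--Weiss in addition to Sinai. If you wanted a complete argument in the setting actually used, you would either have to work with the natural extension and then explain why the factor map can be chosen measurable with respect to the one-sided (non-invertible) $\sigma$-algebra, or reproduce the unilateral coding argument; neither appears in the proposal.
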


For definitions of Bernoulli shift and factor, see Subsection \ref{preliminary on DS 1}.
In the above version of Sinai's factor theorem, we include the case when $h(\mu,T)=0$ -- in this case the theorem is obviously true since every Bernoulli shift with zero entropy is a trivial one-point system (the product measure $\nu$ is a Dirac measure at a fixed point) which is trivially a factor of $(X, T,\mu) $.
The original version of Sinai's factor theorem \cite{Sinai62,Sinai64} was stated for invertible systems, but it also implicitly applies to non-invertible ones (for the proof see also \cite{OW75}).

\begin{quote}
{\em For the rest of this section, we fix an ergodic dynamical system $(X, T,\mu) $ satisfying the hypothesis of Theorem \ref{proposition ergodic results} and let $(\Sigma^\N,\sigma,\nu)$ be a Bernoulli shift with $h(\nu,\sigma)= h(\mu,T)$.} 
\end{quote}

It follows from Sinai's factor theorem that there exists a factor map $\pi:X\to \Sigma^\N$ such that $$\pi\circ T=\sigma\circ \pi \ \textrm{ and }\ \nu=\pi \mu.$$
By Rohlin's disintegration theorem, there exists a system of conditional measures $(\mu_y)_{y\in \Sigma^\N}$ of $\mu$ with respect to $\pi$ satisfying the following properties:
{\em \begin{itemize}
\item[(1)] For $\nu$-a.e. $y$, $\mu_y$ is a Borel probability measure supported on $\pi^{-1}(y)$.
\item[(2)] For every $\mu$-measurable $B\subset X$, the map $y\mapsto \mu_y(B)$ is $\nu$-measurable and 
$$\mu(B)=\int_{\Sigma^\N} \mu_y(B)d\nu(y).$$
\item[(3)] Moreover for $\nu$-a.e. $y$, the measure $\mu_y$ can be obtained as the weak-* limit of $\lim_{r\to 0}\mu_{\pi^{-1}(B(y,r))}$ where $\mu_{\pi^{-1}(B(y,r))}$ is defined by
$$\mu_{\pi^{-1}(B(y,r))}(A)=\frac{\mu\left(\pi^{-1}(B(y,r))\bigcap A\right)}{\mu\left(\pi^{-1}(B(y,r))\right)}.$$
\end{itemize} }
For a proof of the above version of Rohlin's disintegration theorem, see \cite{Simmons2012}.

 The proof of Theorem \ref{proposition ergodic results} relies on two lemmas.  Recall that $\{\mathcal{A}_n\}_n$ is the filtration associated to the generator $\mathcal{A}$ and, for $x\in X$, $\mathcal{A}_n(x)$ is the unique element of  $\mathcal{A}_n$ containing $x$.
\begin{lemma}\label{lemma condition measure egorov 1}
Suppose that $\mu$ satisfies the hypothesis of Theorem \ref{proposition ergodic results}. Let $\nu$ and $(\mu_y)_{y\in \Sigma^\N}$ be as above.
For any $\delta>0$, we have:
\begin{itemize}
\item[(i)] There exist a measurable set $A_\delta\subset X$ with $\mu(A_\delta)>1-\delta$ and $n'  \in \N$ such that for each $x\in A_\delta $,
\begin{equation}\label{eq: lemma condition measure egorov 1}
\mu_{\pi(x)}(\mathcal{A}_n(x))\ge e^{-n\delta} \ \textrm{ for all } n\ge n'.
\end{equation}
\item[(ii)] For any $n\ge 1$, there exist a measurable set $B_\delta^n\subset \Sigma^\N$ with $\nu(B_\delta^n)>1-\delta$ and $r=r(\delta,n)>0$ such that for each $y\in B_\delta^n$ and each $A\in \mathcal{A}_n$ we have  
\begin{equation}\label{eq: lemma condition measure egorov 2}
\frac{\mu\left(\pi^{-1}(B(y,r))\bigcap A\right)}{\mu\left(\pi^{-1}(B(y,r))\right)}\ge (1-\delta)\mu_{y}(A).
\end{equation}
\end{itemize}
\end{lemma}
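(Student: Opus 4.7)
My plan splits along the two parts of the lemma. For part (i), I would invoke the relative Shannon--McMillan--Breiman theorem applied to the $T$-invariant sub-$\sigma$-algebra $\mathcal{F} := \pi^{-1}\mathcal{B}_{\Sigma^{\N}}$. The key observation is that, since $\mathcal{F}$ is the pullback of the Borel $\sigma$-algebra of the factor $(\Sigma^\N, \sigma, \nu)$, the factor-entropy formula gives $h(\mu,T) = h(\nu,\sigma) + h_\mu(T\mid\mathcal{F})$; the hypothesis $h(\nu,\sigma) = h(\mu,T)$ then forces $h_\mu(T\mid\mathcal{F}) = 0$, so $h_\mu(T, \mathcal{A}\mid \mathcal{F}) = 0$ as well. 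The relative SMB theorem yields, for $\mu$-a.e.\ $x$,
\[
-\frac{1}{n}\log \mu\!\left(\mathcal{A}_n(x)\,\big|\,\mathcal{F}\right)\!(x) \;\longrightarrow\; h_\mu(T,\mathcal{A}\mid\mathcal{F}) = 0.
\]
By property (1) of Rohlin's disintegration, $\mu(\cdot\mid\mathcal{F})(x) = \mu_{\pi(x)}$ for $\mu$-a.e.\ $x$, so the left-hand side equals $-\tfrac{1}{n}\log \mu_{\pi(x)}(\mathcal{A}_n(x))$. A standard Egorov argument then produces $A_\delta\subset X$ with $\mu(A_\delta) > 1-\delta$ and $n'\in\N$ on which the convergence is uniform, yielding $\mu_{\pi(x)}(\mathcal{A}_n(x)) \ge e^{-n\delta}$ for all $x\in A_\delta$ and $n\ge n'$.

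For part (ii), I would argue directly from property (3) of Rohlin's disintegration: $\mu_{\pi^{-1}(B(y,r))} \to \mu_y$ weakly as $r\to 0$ for $\nu$-a.e.\ $y$. The hypothesis $\mu(\partial A) = 0$ for every $A\in\mathcal{A}_n$, combined with $\mu = \int \mu_y\,d\nu(y)$, implies $\mu_y(\partial A) = 0$ for $\nu$-a.e.\ $y$ and every $A\in\mathcal{A}_n$; each such $A$ is then a $\mu_y$-continuity set and the weak convergence upgrades to the pointwise statement $\mu_{\pi^{-1}(B(y,r))}(A) \to \mu_y(A)$. Because $\mathcal{A}$ has finite cardinality, $\mathcal{A}_n$ is finite, and this convergence is joint over its finitely many atoms, so for each $y$ in a $\nu$-full set there exists $r_0(y) > 0$ such that the inequality in \eqref{eq: lemma condition measure egorov 2} holds for all $r\le r_0(y)$ and $A\in \mathcal{A}_n$. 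A final Egorov step --- choose $r=r(\delta,n)>0$ small enough that $\nu(\{y:r_0(y)\ge r\}) > 1-\delta$ --- produces the desired set $B_\delta^n$.

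The principal (though mild) obstacle I anticipate is justifying the relative Shannon--McMillan--Breiman theorem for $T$-invariant sub-$\sigma$-algebras of this type; I will cite a form such as Kieffer's or Thouvenot's relative SMB and verify that its hypotheses are met in the present setting. The measurability of $y\mapsto r_0(y)$ needed for the Egorov step in part (ii) is routine once one writes $\{y:r_0(y)\ge 1/k\}$ as a countable intersection of measurable sets built from the function $y\mapsto \mu_y(A)$, which is $\nu$-measurable by property (2) of the disintegration.
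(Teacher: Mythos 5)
Your proposal is correct and follows essentially the same route as the paper: part (i) is the conditional Shannon--McMillan--Breiman theorem (the paper cites Downarowicz, Theorem 3.3.7, leaving implicit the Abramov--Rokhlin step you spell out showing the fiber entropy vanishes) followed by Egorov, and part (ii) is exactly the combination of Rohlin's property (3), the hypothesis $\mu(\partial A)=0$, Portmanteau, finiteness of $\mathcal{A}_n$, and Egorov.
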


\begin{proof}
{\rm (i)} Since  $(\Sigma^\N,\sigma,\nu)$ is a factor of $(X, T,\mu) $ with  $h(\nu,\sigma)= h(\mu,T)$, it follows from the conditional Shannon-McMillan-Breiman Theorem \cite[Theorem 3.3.7]{Downarowicz2011book} that for $\mu$-a.e. $x$, 
$$\lim_{n\to\infty}\frac{\log \mu_{\pi(x)}(\mathcal{A}_n(x))}{-n}=0.$$
By Egorov's theorem, there exist a measurable set $A_\delta\subset X$ with $\mu(A_\delta)>1-\delta$ and $n'\in \N$ such that for each $x\in A_\delta $, $$\frac{\log \mu_{\pi(x)}(\mathcal{A}_n(x))}{-n}\le \delta \ \textrm{ for all } n\ge n'.$$
This is exactly \eqref{eq: lemma condition measure egorov 1}.

{\rm (ii)} Fix any $n\ge 1$. By hypothesis, $\mu(\partial A)=0$ for all $A\in \mathcal{A}_n$. The same holds for $\mu_y$ for $\nu$-a.e. $y$.  Recall that by Rohlin's disintegration theorem,  for $\nu$-a.e. $y$, $\mu_y$ is the weak-* limit of $\mu_{\pi^{-1}(B(y,r))}$ as $r\to0$. Thus, by Portmanteau's theorem, we deduce that for $\nu$-a.e. $y$ and for all $A\in \mathcal{A}_n$,
$$\lim_{r\to\infty}\frac{\mu\left(\pi^{-1}(B(y,r))\bigcap A\right)}{\mu\left(\pi^{-1}(B(y,r))\right)}=\mu_y(A).$$
We can then again apply Egorov's theorem to obtain a measurable set $B_\delta^n\subset \Sigma^\N$ with $\nu(B_\delta^n)>1-\delta$ and $r=r(\delta,n)>0$ such that for each $y\in B_\delta^n$ and each $A\in \mathcal{A}_n$ we have  
\eqref{eq: lemma condition measure egorov 2}.
\end{proof}

The following result is an easy consequence of the mixing property of the  Bernoulli shift $(\Sigma^\N,\sigma,\nu)$.
\begin{lemma}\label{lemma mixing UD 1}
For any measurable set $B\subset \Sigma^{\N}$ with $\nu(B)>0$, the sequence $$\left\{R_\xi^k(0): k\in \N \ \textrm{ and } T^k(x)\in \pi^{-1}(B)\right\}$$ is UD for $\mu$-a.e. $x\in X$.
\end{lemma}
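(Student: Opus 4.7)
The strategy is to transfer the problem to the Bernoulli factor $(\Sigma^{\N},\sigma,\nu)$, apply Weyl's criterion, and exploit the weak mixing of the Bernoulli system together with the ergodicity of $R_\xi$.

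First I would observe that by the equivariance $\pi\circ T=\sigma\circ\pi$, we have $T^k(x)\in\pi^{-1}(B)$ if and only if $\sigma^k(\pi(x))\in B$. Since $\pi\mu=\nu$, in order to prove the lemma it suffices to show that for $\nu$-a.e.\ $y\in\Sigma^{\N}$, the sequence $\{R_\xi^k(0) : k\in\N,\ \sigma^k(y)\in B\}$ is UD in $S^1$. By Birkhoff's ergodic theorem applied to the ergodic system $(\Sigma^{\N},\sigma,\nu)$ and the indicator $1_B$, for $\nu$-a.e.\ $y$ the number of return times up to $N$, namely $N_y(N):=\sharp\{0\le k\le N-1:\sigma^k(y)\in B\}$, satisfies $N_y(N)/N\to\nu(B)>0$.

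Next, by Weyl's criterion, establishing UD of the return-time sequence $(R_\xi^{k_j}(0))_{j\ge 1}$ amounts to showing that for every non-zero integer $m$,
\begin{equation*}
\frac{1}{N_y(N)}\sum_{k=0}^{N-1}1_B(\sigma^k y)\,e^{-2\pi i m k\xi}\longrightarrow 0 \quad\text{as } N\to\infty.
\end{equation*}
Since $N_y(N)/N\to\nu(B)>0$, it is enough to prove that $\frac{1}{N}\sum_{k=0}^{N-1}1_B(\sigma^k y)e^{-2\pi i mk\xi}\to 0$ for $\nu$-a.e.\ $y$. To achieve this I would consider the product system $(\Sigma^{\N}\times S^1,\,\sigma\times R_\xi,\,\nu\times\lambda)$, where $\lambda$ is Lebesgue measure on $S^1$. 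Since $(\Sigma^{\N},\sigma,\nu)$ is Bernoulli and hence weakly mixing, and $(S^1,R_\xi,\lambda)$ is ergodic, by the characterization of weak mixing recalled in Subsection~\ref{subsection preliminary on DS 2}, this product is ergodic. Applying Birkhoff's theorem to the continuous function $F_m(y,t):=1_B(y)e^{-2\pi i m t}$, which has zero integral since $m\ne 0$, yields
\begin{equation*}
\frac{1}{N}\sum_{k=0}^{N-1}1_B(\sigma^k y)\,e^{-2\pi i m(t-k\xi)}\longrightarrow 0\quad\text{for }(\nu\times\lambda)\text{-a.e.\ }(y,t).
\end{equation*}
Factoring out the constant $e^{-2\pi i mt}$ and applying Fubini (the expression depends only on $y$) gives the desired convergence for $\nu$-a.e.\ $y$.

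Intersecting the resulting full-measure sets over the countable collection of non-zero integers $m$ produces a single set of full $\nu$-measure on which Weyl's criterion is verified, and pulling back through $\pi$ gives a set of full $\mu$-measure in $X$, completing the proof. No step is truly hard: the only thing to be careful about is ensuring ergodicity of the skew product $\sigma\times R_\xi$, which is why the Bernoulli (hence weakly mixing) nature of $(\Sigma^{\N},\sigma,\nu)$ provided by Sinai's factor theorem is essential here.
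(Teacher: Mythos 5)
Your proposal is correct and follows essentially the same route as the paper: both arguments rest on the fact that weak mixing of the Bernoulli factor makes the product system $(\Sigma^{\N}\times S^1,\sigma\times R_\xi,\nu\times\mathcal{L})$ ergodic, apply Birkhoff there, and then use the translation $R_\xi^k(t)=R_\xi^k(0)+t$ to pass from almost every $t$ to $t=0$; the paper merely tests UD against indicators of dyadic intervals where you use Weyl's criterion with exponentials. One cosmetic slip: $F_m(y,t)=1_B(y)e^{-2\pi i mt}$ is only measurable, not continuous, but since it is bounded and hence in $L^1(\nu\times\mathcal{L})$, Birkhoff's theorem still applies and nothing is affected.
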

\begin{proof}
Since the Bernoulli shift $(\Sigma^\N,\sigma,\nu)$ is weak-mixing, for any irrational rotation system $([0,1),R_\xi,\mathcal{L})$, the product system $(\Sigma^\N\times [0,1),\sigma\times R_\xi,\nu\times \mathcal{L})$ is ergodic (see Subsection \ref{subsection preliminary on DS 2}). We claim that if $B\subset \Sigma^{\N}$ is measurable with $\nu(B)>0$, then  the set 
$$\left\{R_\xi^k(0): k\in \N \ \textrm{ and } \sigma^k(y)\in B\right\}$$
is UD for $\nu$-a.e. $y\in \Sigma^{\N}$. To see this, note that by the ergodic theorem, for $\nu$-a.e. $y$ and $ \mathcal{L}$-a.e. $t$, the sequence $\{x_n(y,t)\}_n:=\{R_\xi^k(t): k\in \N \ \textrm{ and } \sigma^k(y)\in B\}$ satisfies
$\lim_{N\to\infty}N^{-1}\sharp\{1\le n\le N: x_n(y,t)\in J\}=\mathcal{L}(J)$ for each dyadic interval $J\in \mathcal{D}_k([0,1))$, $k\ge1$. This clearly implies that the sequence $\{x_n(y,t)\}_n$ is UD. Since $R_\xi^k(t)=R_\xi^k(0)+t$ in $[0,1)$, we deduce that $\{x_n(y,0)\}_n$ is UD for $\nu$-a.e. $y$, as claimed.

On the other hand, since $(\Sigma^\N,\sigma,\nu)$ is a factor of $(X,T,\mu)$ with factor map $\pi$, we have for $\mu$-a.e. $x\in X$,
$$\{k\in \N:T^k(x)\in \pi^{-1}(B)\}=\{k\in \N:\sigma^k(\pi(x))\in B\}.$$
Combining this with the above claim, we get the desired result.
\end{proof}

\medskip

\begin{proof}[Proof of Theorem \ref{proposition ergodic results}]
Fix $\epsilon>0$. Let $\delta>0$ be a small constant which we will choose later. Let $A_{\delta}$ and $n'=:n_2$ be the set and the number provided by Lemma \ref{lemma condition measure egorov 1}, (i). Then we have
$$\int_{\Sigma^\N}\mu_y(A_{\delta})d\nu(y)=\mu(A_{\delta})>1-\delta.$$
From this, we deduce that there exists  $\delta_1>0$, with $\delta_1=o(1)$ when $\delta\to 0$, so that the following holds: we can find a measurable set $B_1\subset \Sigma^\N$ with $\nu(B_1)>1-\delta_1$ such that for each $y\in B_1$, we have $\mu_y(A_\delta)>1-\delta_1$. For instance, we can take $\delta_1=\sqrt{\delta}$.  

Fix any $n\ge n_2$. Let $B_\delta^n$ and $r$ be the measurable set and the number provided by Lemma \ref{lemma condition measure egorov 1}, (ii). Note that we have $\nu(B_\delta^n)>1-\delta$. Let $B_2=B_1\cap B_\delta^n$. Then we have $\nu(B_2)>1-\delta-\delta_1$. For each $y\in B_2$, let
$$E(y,n)=\left\{A\in \mathcal{A}_n: \pi^{-1}(y)\cap A_\delta\cap A\neq \emptyset\right\}.$$
By the definition of $A_\delta$, if $x\in A_\delta$, then $\mu_{\pi(x)}(\mathcal{A}_n(x))\ge e^{-n\delta}$. It follows that for each $A\in E(y,n)$ we have $\mu_y(A)\ge e^{-n\delta}$. Since $\mu_y$ is a probability measure, we deduce that $\sharp (E(y,n))\le e^{n\delta}$ for each $y\in B_2$.  

Now, let us consider the following collection of balls of  $\Sigma^\N$:
$$\left\{B(y,r)\subset \Sigma^\N:y\in B_2 \ \textrm{ and } \nu(B(y,r))>0\right\}.$$
Since we use an ultra-metric in $\Sigma^\N$, the above collection is actually finite. Let us enumerate its elements  by $\{B_i\}_{i=1}^{N(n)}$. Note that $B_i$'s are disjoint balls. For each $1\le i\le N(n)$, let us define 
$$\widetilde{C}_i=\pi^{-1}(B_i) \ \ \textrm{ and }\ \    C_i=\pi^{-1}(B_i)\bigcap\left(\bigcup_{A\in E(y,n)}A\right), $$
where $y$ is some point in $B_2$ such that $B(y,r)=B_i$.  
Now we can make our choice of $\delta$. In the following we fix $\delta$ small enough such that 
$$\delta\le \epsilon \ \textrm{ and }\ (1-\delta-\delta_1)(1-\delta)(1-\delta_1)\ge 1-\epsilon.$$
Let $N(n,\epsilon):=N(n)$.
We claim that the families $\{C_i\}_{i=1}^{N(n,\epsilon)}$ and $\{\widetilde{C}_i\}_{i=1}^{N(n,\epsilon)}$ satisfy the properties (1), (2) and (3) in Theorem \ref{proposition ergodic results}.

We first verify the property (2). We have seen that $\sharp (E(y,n))\le e^{n\delta}$ for each $y\in B_2$.  By the definition of $C_i$ and the assumption $\delta\le \epsilon$, this clearly implies the property (2). 

Now, we verify the properties (1) and (3).
Observe that $\mathcal{A}_n$ is a partition of $X$, thus by definition of $E(y,n)$ we have  for $y\in B_2$,
$$\pi^{-1}(y)\cap A_\delta \subset \bigcup_{A\in E(y,n)}A.$$
Note that by the choice of $B_1$, we have
$$\mu_y\left(\pi^{-1}(y)\cap A_\delta\right)=\mu_y( A_\delta)> 1-\delta_1$$
for each $y\in B_1$.
From these two facts, we deduce that if $y\in B_2\subset B_1$, then 
\begin{equation}\label{eq: proof prop erg results 1}
\mu_y\left(\bigcup_{A\in E(y,n)}A\right)\ge 1-\delta_1.
\end{equation}
On the other hand, recall  that each $y\in B_\delta^n$ satisfies \eqref{eq: lemma condition measure egorov 2} for all $A\in \mathcal{A}_n$. Using this and the fact $B_2\subset B_\delta^n$, we deduce from   from  \eqref{eq: proof prop erg results 1} that for each $y\in B_2$, we have
\begin{equation*}
\mu\left(\pi^{-1}(B(y,r))\bigcap\left(\bigcup_{A\in E(y,n)}A\right)\right)\ge (1-\delta)(1-\delta_1)\mu\left(\pi^{-1}(B(y,r))\right).
\end{equation*}
Combining this with the definitions of $C_i$ and $\widetilde{C}_i$ and the choice of $\delta$, we get 
$$\mu(C_i)\ge (1-\delta)(1-\delta_1)\mu(\widetilde{C}_i)\ge (1-\epsilon)\mu(\widetilde{C}_i)$$
for each $1\le i\le N(n,\epsilon)$.
Note also that $$\mu\left(\cup_i\widetilde{C}_i\right)=\mu\left(\cup_i\pi^{-1}(B_i)\right)=\nu\left(\cup_i B_i\right)\ge \nu(B_2)\ge 1-\delta-\delta_1.$$ 
Thus again by the choice of $\delta$, we obtain
$$\mu\left(\cup_i C_i\right)\ge (1-\delta)(1-\delta_1)\mu\left(\cup_i\widetilde{C}_i\right)\ge (1-\delta-\delta_1)(1-\delta)(1-\delta_1)\ge 1-\epsilon.$$
It remains to show that the sequence $$\left\{R_\xi^k(0)\in [0,1): k\in \N \textrm{ and  } T^k(x)\in \widetilde{C}_i\right\}$$
is UD on $[0,1)$. This is implied by Lemma \ref{lemma mixing UD 1}.
\end{proof}

\medskip

\section{Proof of Theorem \ref{main theorem 1}}\label{section Proof of Theorem main 1}

The following result is essential for proving Theorem \ref{main theorem 1}. It is a consequence of the property \eqref{eq: property slice measure 1} of $\nu_\infty$ and 
an application of Theorem \ref{proposition ergodic results}  to the system $(K\times [0,1), U, \nu_\infty)$.
Recall that $\Pi_1$ is the projection from $K\times [0,1)$ to $K$ and $N_{2^{-n}}(A)$ denotes the number of $n$-level dyadic cubes intersecting a set $A$. 

\begin{proposition}\label{proposition good geometric picture 1}
For any $\epsilon>0$, there exist $r_0=r_0(\epsilon)>0$ and $n_3=n_3(\epsilon)\in \N$  such that for each $n\ge n_3$ the following is true: for $\nu_\infty$-a.e. $(z,t)$ we can find a measure $\nu\in \mathcal{P}(K)$, a measurable set $D\subset K\times [0,1)$ and a subset $\mathcal{N}\subset \N$ satisfying the properties:
\begin{itemize}
\item[(1)] The measure  $\nu\in \mathcal{P}(\ell\cap K)$ for some line $\ell$ with slope $\beta^{-t}$.
\item[(2)]    $n^{-1}\log N_{2^{-n}}(\Pi_1(D))\le \epsilon$.
\item[(3)] For each $k\in \mathcal{N} $, $U^k(z,t)\in D$.
\item[(4)] $\mathcal{L}\left(\overline{\{R^k_\theta(t): k\in \mathcal{N}\}}\right)\ge 1-\epsilon$,  where $\mathcal{L}$ denotes the normalized Lebesgue measure on $[0,1)$ (i.e., $\mathcal{L}([0,1))=1$). 
\item[(5)] For each $k\in \mathcal{N} $, 
$$\inf_{y\in K} \frac{1}{n\log 2}H\left(\nu^{\mathcal{A}_k^t(z)}|_{B(y,r_0)^{c}},\mathcal{D}_n\right)\ge \gamma-\epsilon^{\frac{1}{2}}.$$
\end{itemize}
\end{proposition}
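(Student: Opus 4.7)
My plan is to apply the ergodic-theoretic result Theorem \ref{proposition ergodic results} to the ergodic positive-entropy system $(K\times S^1, U, \nu_\infty)$ supplied by Theorem \ref{theorem property slice measure and entropy 1}, and to combine its output with the pointwise slice-measure information of property \eqref{eq: property slice measure 1}. The finite partition $\mathcal{B}_1$ of \eqref{eq: def of partition level 1} is a generator with $\nu_\infty$-null boundaries---as was essentially established inside the proof of Lemma \ref{lemma invariance 2}---so the hypotheses of Theorem \ref{proposition ergodic results} are met with the irrational parameter $\xi=\theta$, which makes the uniform-distribution conclusion directly match the $R_\theta^k$ in~(4).

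Given $\epsilon>0$, I would apply Theorem \ref{proposition ergodic results} at a level $m$ chosen so that each $\mathcal{B}_m$-cell, upon projection to $K$ by $\Pi_1$, has diameter comparable to $2^{-n}$. Since $\beta^{m\theta}=\alpha^m$ and $r_m(t)/m\to\theta$ uniformly in $t$ by Weyl equidistribution, the choice $m\asymp n\log 2/\log\alpha^{-1}$ is forced. The theorem then produces disjoint $\{C_i\},\{\widetilde C_i\}$ in $K\times S^1$ with $\nu_\infty(\bigcup_i C_i)\ge 1-\epsilon$, $\sharp\{B\in\mathcal B_m: C_i\cap B\ne\emptyset\}\le e^{m\epsilon}$, $\nu_\infty(C_i)\ge(1-\epsilon)\nu_\infty(\widetilde C_i)$, and the uniform distribution of $\{R_\theta^k(0): U^k(z,t)\in\widetilde C_{i_0(z,t)}\}$ for $\nu_\infty$-a.e.\ $(z,t)$, where $i_0$ is the (a.s.\ unique) index with $(z,t)\in C_{i_0}$. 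For such a $(z,t)$ I would set $D:=C_{i_0}$, choose $\mu$ and $\nu:=\pi_t\mu$ via \eqref{eq: property slice measure 1}, take $r_0:=\delta(\epsilon)$, and set $\mathcal N:=\{k\ge 0:U^k(z,t)\in D\}\cap G$, where $G\subset\N$ is the density-$(1-2\epsilon)$ set of good times from \eqref{eq: property nu-infty non-concentration and entropy 1}.

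Properties (1) and (3) hold by construction. Property (2) follows because each $\mathcal B_m$-cell meets $O(1)$ dyadic squares at scale $2^{-n}$, so $N_{2^{-n}}(\Pi_1(D))\le Ce^{m\epsilon}$ and $n^{-1}\log N_{2^{-n}}(\Pi_1(D))\le C\epsilon$. For (5), on $k\in\mathcal N\subset G$ the non-concentration bound gives $\nu^{\mathcal A_k^t(z)}(B(y,r_0))\le\epsilon$ for every $y$; splitting $\nu^{\mathcal A_k^t(z)}$ into its $B(y,r_0)^c$- and $B(y,r_0)$-restrictions and using the concavity of $-x\log x$ costs at most $O(\epsilon n)$ in $\mathcal D_n$-entropy, so subtracting from the lower bound $H(\nu^{\mathcal A_k^t(z)},\mathcal D_n)\ge n(\gamma\log 2-2\epsilon)$ guaranteed by $G$ yields~(5).

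The hardest point is (4). The sequence $\{R_\theta^k(t): k\in\mathcal N_1\}$ with $\mathcal N_1:=\{k:U^k(z,t)\in\widetilde C_{i_0}\}$ is uniformly distributed by Theorem \ref{proposition ergodic results}; the passage to $\mathcal N\subset\mathcal N_1$ removes a fraction of $\mathcal N_1$ bounded by $\epsilon$ (from $\widetilde C_{i_0}\setminus D$) together with a subset of $\N$-density $\le 2\epsilon$ (from $G^c$), so by an ergodic-theorem count every arc of length exceeding $O(\epsilon/\nu_\infty(\widetilde C_{i_0}))$ is visited by $\mathcal N$, whence the closure of $\{R_\theta^k(t):k\in\mathcal N\}$ has Lebesgue measure $\ge 1-C\epsilon$ provided $\nu_\infty(\widetilde C_{i_0})$ is not too small. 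I would handle the potentially small-mass components by calibrating the parameter in \eqref{eq: property slice measure 1} as a sufficiently small power of $\epsilon$ compared to the one used in Theorem \ref{proposition ergodic results}, which also accounts for the loss of $\epsilon^{1/2}$ rather than $\epsilon$ in~(5). This calibration, together with the routine concatenation of the two main ingredients, is where the genuine work lies.
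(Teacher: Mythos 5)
Your overall architecture matches the paper's: apply Theorem \ref{proposition ergodic results} to $(K\times S^1,U,\nu_\infty)$ at the level $\widetilde n\asymp n\log 2/\log\alpha^{-1}$, take $r_0=\delta(\epsilon)$ and $\nu=\pi_t\mu_{z,t}$ from \eqref{eq: property slice measure 1}, get (2) from the covering of $\mathcal{B}_{\widetilde n}$-cells by $O(1)$ dyadic squares, and get (5) from the non-concentration bound via the restriction-entropy estimate (Lemma \ref{lemma non-concent and large ent consequence 1}). The gap is exactly at the point you flag as hardest, and your proposed fix does not close it. You take $D=C_{i_0}$ where $i_0$ is the index of the cell \emph{containing} $(z,t)$, and then intersect the visit times with the good set $G$ of $\N$-density $\ge 1-2\epsilon$. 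The set $B(\widetilde C_{i_0},z,t)=\{k:U^k(z,t)\in\widetilde C_{i_0}\}$ has density $\nu_\infty(\widetilde C_{i_0})$ in $\N$, and Theorem \ref{proposition ergodic results} gives no lower bound whatsoever on $\nu_\infty(\widetilde C_{i_0})$: the number $N(\widetilde n,\epsilon)$ of cells and their masses depend on $n$ and are uncontrolled. If $\nu_\infty(\widetilde C_{i_0})\ll\epsilon$, the density-$2\epsilon$ set $G^c$ can contain essentially all of $B(\widetilde C_{i_0},z,t)$, so $\mathcal N$ can have relative density $0$ in the UD sequence and (4) fails. Your suggested repair --- calibrating the parameter in \eqref{eq: property nu-infty non-concentration and entropy 1} as a small power of $\epsilon$ --- cannot work, because no fixed choice of parameters made before invoking Theorem \ref{proposition ergodic results} can dominate an arbitrarily small, $n$-dependent $\nu_\infty(\widetilde C_{i_0})$. (A secondary symptom of the same choice: $\bigcup_i C_i$ only has measure $\ge 1-\epsilon$, so "the index with $(z,t)\in C_{i_0}$" is undefined on a positive-measure set, whereas the proposition must hold $\nu_\infty$-a.e.)

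The paper's resolution is to decouple the choice of $i_0$ from the location of $(z,t)$ and select it by pigeonhole on \emph{relative} densities: since the good times $A(\nu,z,t)$ have lower density $\ge 1-2\epsilon$ in $\N$, the disjoint sets $B(C_i,z,t)$ have total density $\ge 1-\epsilon$, and (by ergodicity, via the auxiliary full-measure set $A''$) each $B(C_i,z,t)$ and $B(\widetilde C_i,z,t)$ has an exact density equal to its $\nu_\infty$-mass, an averaging argument produces at least one $i_0$ for which $A(\nu,z,t)\cap B(C_{i_0},z,t)$ has relative lower density $\ge 1-3\epsilon$ \emph{inside} $B(C_{i_0},z,t)$, hence $\ge(1-3\epsilon)(1-\epsilon)$ inside $B(\widetilde C_{i_0},z,t)$. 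Note that property (3) only requires $U^k(z,t)\in D$ for $k\in\mathcal N$, not $(z,t)\in D$, so nothing forces $D$ to be the cell containing $(z,t)$. With that $i_0$, Lemma \ref{lemma positive density positive measure 1} applied to the UD sequence $\{R_\theta^k(t):k\in B(\widetilde C_{i_0},z,t)\}$ yields (4) with constant $4$, uniformly in the mass of $\widetilde C_{i_0}$. You should replace your choice of $i_0$ by this pigeonhole step; the rest of your argument then goes through as written.
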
      
Recall that $\nu^{\mathcal{A}_k^t(z)}$  is defined as \eqref{eq: def magnification measure 1}, and it is supported on some slice $\ell'\cap K$ with slope $\beta^{-R_\theta^k(t)}$.  Recall also that $\eta|_{E}$ denotes the restriction of a measure $\eta$ on $E$; see Section \ref{subsection Partition and entropy 1} for the definition of entropy.

For the proof of Proposition \ref{proposition good geometric picture 1}, we need two elementary lemmas.
For $F_1\subset F_2\subset \N$, we define the {\em upper density} of $F_1$ in $F_2$, denoted $\overline{d}(F_1,F_2)$, as 
$$\overline{d}(F_1,F_2)=\limsup_{N\to\infty}\frac{\sharp\{F_1\cap [0,N-1]\}}{\sharp\{F_2\cap [0,N-1]\}}.$$
Similarly, we define the {\em lower density} $\underline{d}(F_1, F_2)$ of $F_1$ in $F_2$. If $\overline{d}(F_1,F_2)=\underline{d}(F_1,F_2)$, then we say the density of $F_1$ in $F_2$ exists and denote it by $d(F_1,F_2)$. 
\begin{lemma}\label{lemma positive density positive measure 1}
Let $\{x_k\}_{k\in\N}\subset [0,1)$ be a sequence which is UD. Suppose that $F\subset\N$, then 
$$\mathcal{L}\left(\overline{\left\{x_k:k\in F\right\}}\right)\ge \overline{d}(F,\N).$$
\end{lemma}
\begin{proof}
Let $E=\overline{\left\{x_k:k\in F\right\}}$. If $\mathcal{L}( E^c)>0$, then for any $\epsilon>0$, we can find finitely many intervals $\{J_i\}_i\subset E^c$ such that $\mathcal{L}(\cup_iJ_i)>\mathcal{L}( E^c)-\epsilon$. Now since $\{x_k\}_{k\in \N}$ is UD, we have
\begin{eqnarray*}
\mathcal{L}(\cup_iJ_i) & = & \lim_{N\to\infty}N^{-1}\sharp\{1\le k\le N:x_k\in \cup_iJ_i\} \\
 & = & 1-\lim_{N\to\infty}N^{-1}\sharp\{1\le k\le N:x_k\notin \cup_iJ_i\}\le 1-\overline{d}(F).
\end{eqnarray*}
\end{proof}

\begin{lemma}\label{lemma non-concent and large ent consequence 1}
Let $\eta \in \mathcal{P}(\R^d)$ and $0<\delta<1$. If $\sup_{y\in \R^d}\eta(B(y,\delta))\le \epsilon$, then for $n\in \N$ with $2^{-n}\le \delta $, we have 
$$\inf_{y\in \R^d}H(\eta|_{B(y,\delta)^{c}},\mathcal{D}_n)\ge H(\eta,\mathcal{D}_n)-C_1n\epsilon^{\frac{1}{2}}  $$
for some constant $C_1$ depending only on $d$.
\end{lemma}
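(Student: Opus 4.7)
The idea is the standard concavity-of-entropy argument: removing a set of small $\eta$-measure and bounded ``complexity at scale $2^{-n}$'' should change the entropy of $\eta$ at scale $2^{-n}$ by only a small amount. Fix $y \in \R^d$, set $A = B(y,\delta)$ and $p = \eta(A)$; by hypothesis $p \le \epsilon$. Consider the refined partition $\mathcal{D}_n' = \mathcal{D}_n \vee \{A, A^c\}$ and decompose the entropy via the conditional entropy formula:
\[
 H(\eta, \mathcal{D}_n') \;=\; H(p, 1-p) \;+\; p\, H(\eta^A, \mathcal{D}_n) \;+\; (1-p)\, H(\eta^{A^c}, \mathcal{D}_n),
\]
where $\eta^A = \eta|_A/p$ and $\eta^{A^c} = \eta|_{A^c}/(1-p)$ are the conditional probability measures. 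Since refining increases entropy, $H(\eta, \mathcal{D}_n) \le H(\eta, \mathcal{D}_n')$, and the key inequality we extract is
\[
 (1-p)\, H(\eta^{A^c}, \mathcal{D}_n) \;\ge\; H(\eta,\mathcal{D}_n) \;-\; p\, H(\eta^A, \mathcal{D}_n) \;-\; H(p,1-p).
\]

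The next step is to bound $H(\eta^A, \mathcal{D}_n)$ by $\log$ of the number of cubes that intersect $A$. Since $A = B(y,\delta)$ with $\delta \le 1$, a volume/packing estimate gives $\sharp\{D \in \mathcal{D}_n : D \cap A \ne \emptyset\} \le c_d (\delta\cdot 2^n)^d \le c_d\, 2^{nd}$, hence $H(\eta^A, \mathcal{D}_n) \le nd \log 2 + \log c_d$. Substituting and using $p \le \epsilon$, $H(p,1-p) \le \log 2$, and $1-p \ge 1-\epsilon$ gives
\[
 H(\eta^{A^c}, \mathcal{D}_n) \;\ge\; H(\eta, \mathcal{D}_n) \;-\; C_d\, n\,\epsilon \;-\; O(1).
\]
If $\eta|_{B(y,\delta)^c}$ in the statement denotes the unnormalized restriction, the same estimate transfers via the identity $H(c\mu, \mathcal{P}) = c\,H(\mu,\mathcal{P}) - c\log c$ with $c = 1-p$, producing only an additional $O(1)$ error; if it denotes the conditional probability the above is already what is needed. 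Taking the infimum over $y$ and using $\epsilon \le 1$ so that $\epsilon \le \sqrt{\epsilon}$, together with the fact that the $O(1)$ error is dominated by $n\sqrt{\epsilon}$ (this is the only place where the weaker exponent $1/2$ is used, absorbing the additive $O(1)$ into the main term for all relevant $n$), yields the stated bound with $C_1 = C_1(d)$.

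The argument is essentially routine; there is no real obstacle beyond correctly choosing the concave decomposition and bounding the entropy of $\eta^A$ at scale $2^{-n}$ by a term linear in $n$. The mild technical point worth flagging is the choice of interpretation of $\eta|_{B(y,\delta)^c}$ (normalized versus unnormalized), but in either case the bound is identical up to an additive $O(1)$ that is harmless on the scale $C_1 n \sqrt{\epsilon}$.
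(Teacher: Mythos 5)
Your decomposition is sound and genuinely different from the paper's. The paper does not condition on $\{A,A^c\}$; instead it isolates the dyadic cubes of $\mathcal{D}_n$ meeting $B(y_0,\delta)$, notes (using $2^{-n}\le\delta$) that their union $E$ has diameter at most $3\sqrt{d}\,\delta$ and hence $\eta(E)\le C'\epsilon$ by the non-concentration hypothesis, and bounds their total entropy contribution via the log-sum inequality $\sum_i x_i\log(1/x_i)\le x\log k+x\log(1/x)$ by $C'\epsilon\cdot O(nd)+\eta(E)\log(1/\eta(E))\le C_1n\epsilon^{1/2}$; since the restriction is unnormalized, $H(\eta|_{B^c},\mathcal{D}_n)\ge H(\eta,\mathcal{D}_n)-A$ is then immediate. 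Your conditional-entropy route reaches essentially the same two error terms (an $\epsilon\cdot O(n)$ term from the cubes near the ball plus a sublinear-in-$\epsilon$ term), and your handling of the normalization is fine: for the unnormalized restriction the identity $H(c\mu,\mathcal{P})=cH(\mu,\mathcal{P})-c\log c$ only helps, since $-c\log c\ge0$.

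However, one step as written is wrong: the additive $O(1)$ coming from the crude bound $H(p,1-p)\le\log 2$ is \emph{not} dominated by $n\sqrt{\epsilon}$ ``for all relevant $n$''. The lemma allows any $n$ with $2^{-n}\le\delta$, so $n$ can be as small as $1$ while $\epsilon$ is arbitrarily small, and then $\log 2\gg n\sqrt{\epsilon}$. The repair stays entirely inside your argument: since $p\le\epsilon$ and $t\mapsto H(t,1-t)$ is increasing on $[0,1/2]$, one has $H(p,1-p)\le\epsilon\log(1/\epsilon)+\epsilon\le C\sqrt{\epsilon}\le Cn\sqrt{\epsilon}$ when $\epsilon\le1/2$ (and when $\epsilon>1/2$ the constant $\log 2$ is trivially at most $\sqrt{2}\log 2\cdot n\sqrt{\epsilon}$). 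This term is the analogue of $\eta(E)\log(1/\eta(E))$ in the paper's proof and is the true source of the exponent $1/2$ in the statement --- it is not obtained by absorbing a fixed constant into $n\sqrt{\epsilon}$.
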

\begin{proof}
We will use the elementary fact that if $\mu$ is a finite (not necessarily probability) measure on a metric space $X$, then for any finite partition $\mathcal{A}=\{A_i\}_{i=1}^k$ of $X$, we have
\begin{equation}\label{eq: lemma non-concent and large ent consequence, basic 1}
H(\mu,\mathcal{A})\le\sum_i\frac{\mu(X)}{k}\log \frac{k}{\mu(X)}= \mu(X)\log k+\mu(X)\log \frac{1}{\mu(X)}, 
\end{equation}
with equality only if $\mu(A_i)=\mu(X)/k$ for each $i$. 

Recall that $\mathcal{D}_n$ is the  collection of $n$-th level dyadic cubes of $\R^d$. Fix any $y_0\in \R^d$. Let $\mathcal{A}=\{w\in \mathcal{D}_n: w\cap B(y_0,\delta)\neq \emptyset\}$ and $E=\cup_{w\in \mathcal{A}}w$. Note that since
$2^{-n}\le \delta $, for some constant $C'=C'(d)$,  we have ${\rm diam}(E)\le C'\delta $ and $E$ can be covered by less than $C'$ balls of diameter $\delta$, thus $\eta(E)\le C'\epsilon$. Now to conclude the proof we only need to notice that 
$$H(\eta|_{B(y_0,\delta)^{c}},\mathcal{D}_n)\ge H(\eta,\mathcal{D}_n)-H(\eta|_E,\mathcal{A})$$
and by \eqref{eq: lemma non-concent and large ent consequence, basic 1},
$$H(\eta|_E,\mathcal{A})\le \eta(E)\log \sharp\mathcal{A}+\eta(E)\log \frac{1}{\eta(E)}\le C_1n\epsilon^{\frac{1}{2}}$$
for some constant $C_1=C_1(d)$.
\end{proof}

Now we are ready to prove Proposition \ref{proposition good geometric picture 1}.
\begin{proof}[Proof of Proposition \ref{proposition good geometric picture 1}.]
Fix any $\epsilon>0$.
Recall that by Theorem \ref{theorem property slice measure and entropy 1}, the measure $\nu_\infty$ is ergodic and satisfies the property \eqref{eq: property slice measure 1}. Let $r_0(\epsilon):=\delta(\epsilon),$ where $\delta(\epsilon)$ is the constant appearing in property \eqref{eq: property slice measure 1}.   

Recall that $\mathcal{B}_1$ is the partition of $K\times [0,1)$ defined in \eqref{eq: def of partition level 1}. Since $\mathcal{B}_1$ is a generator with finite cardinality and $\nu_\infty(\partial B)=0$ for each $B\in \mathcal{B}_n$, $n\ge 1$ (see the proof of Lemma \ref{lemma invariance 2}), we can apply Theorem \ref{proposition ergodic results} to the system $(K\times [0,1), U,\nu_\infty)$. Let $n_2(\epsilon )$ be the integer provided by Theorem \ref{proposition ergodic results}. Let $$n_3(\epsilon):=\max\{n_2(\epsilon ),n_2(\epsilon )\frac{\log\alpha^{-1}}{\log 2},n_1(\epsilon )\},$$ where  $n_1(\epsilon )$ is the integer appearing in  \eqref{eq: property slice measure 1}. 

We fix any $n\ge n_3(\epsilon)$.   Let $\widetilde{n}=\lfloor n\frac{\log 2}{\log\alpha^{-1}} \rfloor+1$, where $\lfloor x\rfloor$ denotes the integer part of $x$. By the choice of $ n_3(\epsilon)$, we have $\widetilde{n}\ge n_2(\epsilon ).$ Then by Theorem \ref{proposition ergodic results},  we can find a disjoint family $\{C_i\}_{i=1}^{N(\widetilde{n},\epsilon)}$ of measurable subsets  $C_i\subset K\times [0,1)$ satisfying the following properties: 
\begin{itemize}
\item[(i)] We have $\nu_\infty(\bigcup_iC_i)\ge 1-\epsilon$.

\item[(ii)] For $1\le i\le N(\widetilde{n},\epsilon)$, we have $\sharp\left\{E\in \mathcal{B}_{\widetilde{n}}: C_i\cap E\neq \emptyset\right\}\le e^{\epsilon \widetilde{n}}$.

\item[(iii)]  There exists another disjoint family $\{\widetilde{C}_i\}_{i=1}^{N(\widetilde{n},\epsilon)}$ of measurable subsets  $\widetilde{C}_i\subset K\times [0,1)$ such that for each $1\le i\le N(\widetilde{n},\epsilon)$, we have $C_i\subset\widetilde{C}_i$, $\nu_\infty(C_i)\ge(1-\epsilon)\nu_\infty(\widetilde{C}_i)$ and  for $\nu_\infty$-a.e. $(z,t)$  the sequence 
\begin{equation}\label{eq: proof proposition good geometric picture 1}
\left\{R_\theta^k(t)\in [0,1): k\in \N \textrm{ and  } U^k(z,t)\in \widetilde{C}_i\right\}
\end{equation}
  is UD. 
\end{itemize}
Now, it follows from the above property (iii) and the property \eqref{eq: property slice measure 1} that the following set 
$$
A' := \left\{ \begin{array}{ll}
(z,t) :&\textrm{the sequence \eqref{eq: proof proposition good geometric picture 1} is UD for each $1\le i\le N(\widetilde{n},\epsilon)$ and there}\\
        &\textrm{exists $\mu=\mu_{z,t}$ such that $\pi_t\mu\in \mathcal{P}(l\cap K)$ for some line $l$ with}\\
        &\textrm{slope $\beta^{-t}$ and \eqref{eq: property nu-infty non-concentration and entropy 1} holds for $\pi_t\mu$ and $z$.}
\end{array} \right\}
$$
has full $\nu_\infty$-measure.  For $1\le i\le N(\widetilde{n},\epsilon)$, let 
$$H(C_i,z,t)=\left\{k\in \N : U^k(z,t)\in C_i\right\}  \ \textrm{ and }\ H(\widetilde{C}_i,z,t)=\left\{k\in \N \textrm{ and  } U^k(z,t)\in \widetilde{C}_i\right\}.$$  
Let $A''$ be the set of $(z,t)$ such that for each $i$, 
$$d(H(C_i,z,t),\N)=\nu_\infty(C_i)\ \textrm{ and }\ d(H(\widetilde{C}_i,z,t),\N)=\nu_\infty(\widetilde{C}_i).$$ 
Recall that for a subset $F$ of $\N$,  $d(F,\N)$ denotes the density of $F$ in $\N$. 
By ergodicity of $\nu_\infty$, $A''$ also has full $\nu_\infty$-measure.
Let $A=A'\cap A''$. Then we still have $\nu_\infty(A)=1$.  

Now, let us pick any $(z,t)\in A$. In the following, we will find a measure $\nu\in \mathcal{P}(K)$, a measurable set $D\subset K\times [0,1)$ and a subset $\mathcal{N}\subset \N$ satisfying the properties (1)-(5) in the statement of Proposition \ref{proposition good geometric picture 1}.  

Note that since $A\subset A'$, $(z,t)\in A'$. It follows that  there exists $\mu=\mu_{z,t}$ such that $\pi_t\mu\in \mathcal{P}(\ell\cap K)$ for some line $\ell$ with slope $\beta^{-t}$ and \eqref{eq: property nu-infty non-concentration and entropy 1} holds for $\pi_t\mu$ and $z$. Let $$\nu=\pi_t\mu_{z,t}.$$ 
Recall that $r_0(\epsilon)=\delta(\epsilon)$ and $n\ge n_3(\epsilon)\ge n_1(\epsilon)$, where $\delta(\epsilon)$ and $n_1(\epsilon)$ are the constant and the integer appearing in the property \eqref{eq: property nu-infty non-concentration and entropy 1}.
Thus by \eqref{eq: property nu-infty non-concentration and entropy 1},  the set 
$$A(\nu,z,t):=\left\{  k\in \N : \sup_{y\in K}\nu^{\mathcal{A}_k^t(z)}(B(y,\delta(\epsilon)))\le \epsilon \  \textrm{ and } H(\nu^{\mathcal{A}_k^t(z)},\mathcal{D}_n)  \ge n(\gamma\log 2-2\epsilon)\right\}$$
has lower density  at least $1-2\epsilon$ in $\N$.
On the other hand, by the above property (i), the density of $\bigcup_{i=1}^{N(\widetilde{n},\epsilon)}H(C_i,z,t)$ in $\N$ is at least $1-\epsilon$. Note also that the $H(C_i,z,t)$'s are disjoint. It follows that there exists at least one $1\le i_0\le N(\widetilde{n},\epsilon)$ such that the lower density of $A(\nu,z,t)\cap H(C_{i_0},z,t) $ in $ H(C_{i_0},z,t)$ is at least $1-3\epsilon$. Let $$
D=C_{i_0}\ \textrm{ and } \ \mathcal{N}=A(\nu,z,t)\cap H(C_{i_0},z,t). 
$$
Since $H(C_{i_0},z,t)$ has density at least $(1-\epsilon)$ in $H(\widetilde{C}_{i_0},z,t)$, we deduce that the lower density of $\mathcal{N}$ in $H(\widetilde{C}_{i_0},z,t) $ is at least $(1-3\epsilon)(1-\epsilon)$. 
Now, since $(z,t)\in A'$, the sequence
$$\left\{R_\theta^k(t)\in [0,1): k\in H(\widetilde{C}_{i_0},z,t)\right\}$$ 
is UD in $[0,1)$. From Lemma \ref{lemma positive density positive measure 1}, we obtain
$$\mathcal{L}\left(\overline{\{R^k_\theta(t): k\in \mathcal{N}\}}\right)\ge  (1-3\epsilon)(1-\epsilon)\ge 1-4\epsilon.$$
Let us now consider the projection $\Pi_1(D)$. By the above property (ii), we have $$\sharp\left\{E\in \mathcal{B}_{\widetilde{n}}: D\cap E\neq \emptyset\right\}\le e^{\epsilon \widetilde{n}}.$$ It follows that 
$$\sharp\left\{A\in \Pi_1(\mathcal{B}_{\widetilde{n}}): \Pi_1(D)\cap A\neq \emptyset\right\}\le e^{\epsilon \widetilde{n}}.$$ Recall that each element of $\Pi_1(\mathcal{B}_{\widetilde{n}})$ is in  $\mathcal{A}^t_{\widetilde{n}}$ for some $t\in [0,1)$. By definition, it is clear that each element in $\mathcal{A}^t_{\widetilde{n}}$ can be covered by $C_2$ balls of diameter $\alpha^{\widetilde{n}}$, where $C_2$ is a constant depending only on the geometry of $\R^2$, $\alpha$ and $\beta$.
By the choice of $\widetilde{n}$, we have $\alpha^{\widetilde{n}}\le 2^{-n}$. Thus we get 
$$n^{-1}\log N_{2^{-n}}(\Pi_1(D))\le C_3\epsilon$$
for some constant $C_3$ depending only on $\R^2$, $\alpha$ and $\beta$.
It remains to show the property (5) of Proposition \ref{proposition good geometric picture 1}. For this,  we use the fact that for each $k\in \mathcal{N}$, the measure $\nu^{\mathcal{A}_k^t(z)}$ satisfies the  inequalities in the definition of $A(\nu,z,t)$ and  apply Lemma \ref{lemma non-concent and large ent consequence 1} to $\nu^{\mathcal{A}_k^t(z)}$ to get
$$\inf_{y\in K} \frac{1}{n\log 2}H\left(\nu^{\mathcal{A}_k^t(z)}|_{B(y,r_0(\epsilon))^{c}},\mathcal{D}_n\right)\ge \gamma-C_4\epsilon^{\frac{1}{2}} $$
for some constant $C_4$ depending only on $\R^2$, $\alpha$ and $\beta$.
Note that to effectively apply Lemma \ref{lemma non-concent and large ent consequence 1} we need to assume that $n\ge n_3(\epsilon)$ was chosen large enough so that $2^{-n}\le r_0 (\epsilon)$. For this we may replace $n_3(\epsilon)$, if necessary, by a larger number which we contunue to 
denote by $n_3(\epsilon)$, such that $2^{-n_3(\epsilon)}\le r_0 (\epsilon)$.
Letting $C=\max\{C_3,4,C_4^2\}$ we get that the chosen $\nu,D$ and $\mathcal{N}$ satisfy the properties (1)-(5) of Proposition \ref{proposition good geometric picture 1} provided that in (1)-(5) we replace $\epsilon$ by $C\epsilon$. To complete the proof, we only need to replace $r_0(\epsilon)$ and $n_3(\epsilon)$ by $r_0(\epsilon/C)$ and $n_3(\epsilon/C)$, respectively. 
\end{proof}

%Now we can give the proof of Theorem \ref{main theorem 1}.

\subsection{Proof of Theorem \ref{main theorem 1}} \label{section Proof of main Theorem 1}
Recall that we initially assumed  \eqref{eq: assumption slice dim} and we need to prove $\dim_{\rm H}K\ge 1+\gamma$. 
Since $K=C_\alpha\times C_\beta$ and $\dim_{\rm H}C_\alpha=\overline{\dim}_{\rm B}C_\alpha$ and $\dim_{\rm H}C_\beta=\overline{\dim}_{\rm B}C_\beta$, by Lemma \ref{lemma Dimensions of product sets}, $\dim_{\rm H}K=\overline{\dim}_{\rm B}K$.
Thus it suffices to show that $\overline{\dim}_{\rm B}K\ge 1+\gamma$.

Fix a small $\epsilon>0$. Let $r_0=r_0(\epsilon)$ and $n_3=n_3(\epsilon)$ be as in Proposition \ref{proposition good geometric picture 1}. Fix any large $n\ge n_3$. Choose a point $(z,t)\in K\times [0,1)$, a measure $\nu\in \mathcal{P}(K)$, a measurable set $D\subset K\times [0,1)$ and a subset $\mathcal{N}\subset \N$ satisfying the properties (1)--(5) of Proposition \ref{proposition good geometric picture 1}. 

We claim that for any $k\in \mathcal{N}$,
\begin{equation}\label{eq: proof theorem main 1}
\inf_{y\in K} \frac{1}{n\log 2}\log N_{2^{-n}}\left({\rm supp}\left(\nu^{\mathcal{A}_k^t(z)}\right)\setminus B(y,r_0)\right)\ge \gamma-o(1) \ \textrm{ as $\epsilon\to 0$ and $n\to \infty$}.
\end{equation}
The claim is just a consequence of  the property (5) and the elementary formula \eqref{eq: lemma non-concent and large ent consequence, basic 1}.

Note that since $\nu\in \mathcal{P}(\ell\cap K)$ for some line $\ell$ with slope $\beta^{-t}$, $\nu^{\mathcal{A}_k^t(z)}$ is a measure supported on some other slice $\ell'\cap K$ with slope $\beta^{-R_\theta^k(t)}$. Note also that for each $k\in\mathcal{N}$, we have $\Pi_1(U^k(z,t))\in \Pi_1(D)$ and the support of $\nu^{\mathcal{A}_k^t(z)}$ intersects $\Pi_1(D)$.

Let us  summarize the consequences of the properties (1)--(5): For any $\epsilon>0$, there exist a set $F=\{R^k_\theta(t): k\in \mathcal{N}\}\subset [0,1)$ with $\mathcal{L}\left(\overline{F}\right)\ge 1-C\epsilon$ and a set $D_1=\Pi_1(D)\subset K$ with $n^{-1}\log N_{2^{-n}}(D_1)\le C\epsilon$ such that for each $s\in F$ there exists a line $\ell=\ell_s$ with slope $\beta^{-s}$ intersecting $D_1$ and satisfying 
\begin{equation}\label{eq: proof theorem main 3}
\inf_{y\in K} \frac{1}{n\log 2}\log N_{2^{-n}}\left(\ell\cap K\setminus B(y,r_0)\right)\ge \gamma-o(1)\ \textrm{ as $\epsilon\to 0$ and $n\to \infty$}.
\end{equation}
Now, let us consider the set $\widetilde{K}:=K-D_1=\{w-v:w\in K, v\in D_1\}$. It follows from the above  summarized property that for any $t\in F$, we can find some line $\ell=\ell_t'$ with slope $\beta^{-t}$ satisfying  \eqref{eq: proof theorem main 3} and  passing through an $n$-th level dyadic cube containing the origin.
From this, it is easy to check that we have
$$\frac{\log N_{2^{-n}}(\widetilde{K})}{n\log 2}\ge 1+\gamma -o(1) \ \textrm{ as } \epsilon\to 0 \textrm{ and } n\to\infty.$$ 
It is a well known fact that for each $d\ge 1$ there exists a constant $C(d)$ such that $N_{2^{-n}}(A+B)\le C(d)N_{2^{-n}}(A)N_{2^{-n}}(B)$ for any $A,B\subset \R^d$.
Since $n^{-1}\log N_{2^{-n}}(D_1)= o(1)$, it follows that 
$$\frac{\log N_{2^{-n}}(K)}{n\log 2}\ge 1+\gamma -o(1) \ \textrm{ as } \epsilon\to 0 \textrm{ and } n\to\infty.$$ 
This implies  $\overline{\dim}_{\rm B}(K)\ge 1+\gamma$.

\medskip

\section{Proof of Theorem \ref{main theorem 2} }\label{section Proof of Theorem main 2}

For proving  Theorem \ref{main theorem 2}, we follow the same scheme as in the proof of Theorem \ref{main theorem 1}. We only give a sketch of the proof.

Let $X$ be a self-similar set satisfying the conditions of Theorem \ref{main theorem 2}. Suppose that there exists a slice $\ell_0\cap X$ with upper box dimension $\gamma>0$. Our aim is to show that we must have $\dim_{\rm H}X\ge 1+\gamma$. 

{\bf Construction of CP-distributions based on $\ell_0\cap X$.} We  will first construct an ergodic CP-distribution $Q$ with dimension at least $ \gamma$ such that $Q_1$-almost every measure is supported on a slice of $X$. 

We first recall some notations. Let $\mathcal{F}=\{f_i(x)=\lambda O_\xi x+t_i\}_{i=1}^m$ be the IFS generating $X$. Recall that $\lambda\in (0,1), t_i\in \R^2$ and $O_\xi$ is the rotation matrix of  angle $2\pi\xi\in [0,2\pi)$ with $\xi$ irrational.  

Write $\Lambda=\{t_i\}_{i=1}^m$. Consider the symbolic space $\Lambda^\N$ endowed with the metric $d_\lambda$ (recall \eqref{eq: def distance simbolic 1}).
Let $\Pi: \Lambda^\N\to X$ be the projection map defined as 
$$\Pi((x_n)_n)=\sum_{n=1}^\infty \lambda^{n-1}O_\xi^{n-1}x_n.$$
Then $X=\Pi(\Lambda^\N)$. Note that since $\mathcal{F}$ satisfies the strong separation condition, the map $\Pi$ is bi-Lipschitz. Let $M: \mathcal{P}(\Lambda^\N)\times \Lambda^\N$ be the magnification operator defined as 
$$M(\mu,x)=(\mu^{[x_1]},\sigma(x)).$$ 

Recall that for some line $\ell_0$ we have $\overline{\dim}_{\rm B} X\cap \ell_0=\gamma$. Let $A=\Pi^{-1}(X\cap \ell_0)$. Since $\Pi$ is bi-Lipschitz, the upper box dimension of $A$ is also $\gamma$. Thus there exists a sequence $n_k\nearrow \infty$ such that 
$$\lim_{k\to\infty}\frac{N_{\lambda^{n_k}}(A)}{-n_k\log \lambda}=\gamma.$$ 
Similarly as in Subsection \ref{subsection construction of CP-dist 1}, we define a sequence of measures $\{\mu_k\}_k$ on $A$:
$$\mu_k=\frac{1}{N_{\lambda^{n_k}}(A)}\sum_{u\in \Lambda^{n_k}: [u]\cap A\neq \emptyset} \delta_{x_u},$$
where $x_u$ is some point in $[u]\cap A$. Then we set
\begin{equation*}
P_k  =  \frac{1}{N_{\lambda^{n_k}}(A)}\sum_{u\in \Lambda^{n_k}: [u]\cap A\neq \emptyset} \delta_{(\mu_k,x_u)} \ \ \textrm{ and }\
Q_k  =  \frac{1}{n_k}\sum_{i=0}^{n_k-1} M^iP_k.
\end{equation*}
 Let $Q$ be an accumulation point of $\{Q_k\}_k$. Then $Q$ is $M$-invariant and adapted, thus it is a CP-distribution. 
Moreover, it has dimension 
$$H(Q)=\int \frac{1}{\log \alpha}\log \mu[x_1]dQ(\mu,x)=\gamma.$$
One can also show that the measure component of $Q$ is supported on measures which are supported on slices of $X$.
Up to replacing $Q$ by one of its ergodic components with dimension $\ge \gamma$, we may assume that $Q$ is an ergodic CP-distribution with dimension at least $ \gamma$ and that $Q$ is supported on  measures which are supported on slices  of $X$.

{\bf The transformation $W$ on  $X$ and a $W$-invariant measure $\nu$.}
Let $W$ be the inverse map of the IFS $\mathcal{F}$ on $X$, that is, the restriction of $W$  on $f_i(X)$ is $f_i^{-1}$. Then $W$ is expanding and rotating, and it transforms a slice $l\cap X$ into finitely many pieces of slices with the angle of each of the transformed slices being rotated by $-\xi$ comparing to that of the initial slice $l$. 

We use $\mathcal{A}_n$ to denote the partition of $X$ given by 
$$\{\Pi([u]): u\in \Lambda^n\}.$$
For any measure $\eta\in \mathcal{P}(X)$ and $x\in {\rm supp}(\eta)$, we write 
$$\eta^{\mathcal{A}_n(x)}=W^n\left(\frac{\eta|_{\mathcal{A}_n(x)}}{\eta(\mathcal{A}_n(x))}\right).$$
Consider the map  $G:\mathcal{P}(\Lambda^\N)\times \Lambda^\N\to \mathcal{P}(X)$ defined by 
$$G(\mu,x)=\Pi \mu.$$
Then $G$ is continuous. Applying the ergodic theorem to the CP-distribution $Q$, we get for $Q$-a.e. $(\mu,x)$,
$$\frac{1}{N}\sum_{n=0}^{N-1}G(M^n(\mu,x))\to \int GdQ\ \ \textrm{as } N\to\infty.$$
By the definition of $M$, we have $G(M^n(\mu,x))=(\Pi\mu)^{\mathcal{A}_n(x)}$. Thus for $Q$-a.e. $(\mu,x)$,
$$\frac{1}{N}\sum_{n=0}^{N-1}(\Pi\mu)^{\mathcal{A}_n(x)}\to \int \Pi\mu dQ\ \ \textrm{as } N\to\infty.$$
Now, with similar arguments as in the proof of Proposition \ref{proposition invariance 1}, we can prove that the measure $\nu:=\int \Pi\mu dQ$ is actually $W$-invariant. Furthermore, by proceeding analogously as in Subsection \ref{subsection Properties and entropy of the measure}, we can show that $\nu$ satisfy a similar property as \eqref{eq: property slice measure 1}: for any $\epsilon>0$, there exist $\delta=\delta(\epsilon)>0$ and $n_0=n_0(\epsilon)\in\N$ such that for $\nu$-a.e. $z\in X$, there exists $\mu\in \mathcal{P}(\Lambda^\N)$ with  $\Pi\mu\in \mathcal{P}(l\cap X)$ for some line $l$ and 
\begin{equation}
\begin{split}
\liminf_{N\to\infty}\frac{1}{N}\sharp \bigg\{1\le k\le N : \sup_{y\in K}(\Pi\mu)^{\mathcal{A}_k(z)}(B(y,\delta))\le \epsilon \ & \textrm{ and } \\
 H((\Pi\mu)^{\mathcal{A}_k(z)},\mathcal{D}_n)  \ge n(\gamma\log 2-\epsilon)\bigg\}>&1-\epsilon \ \textrm{ for all } n\ge n_0.
\end{split}
\end{equation}
Up to taking an ergodic component, we may also assume that $\nu$ is ergodic. 

{\bf Applying the ergodic theoretical result to the system $(X,W,\nu)$, and conclusion.} 
Now, we apply Theorem \ref{proposition ergodic results} to the system $(X,W,\nu)$ and proceed as in Section \ref{section Proof of Theorem main 1} to finally conclude that $\overline{\dim}_{\rm B}(X)\ge 1+\gamma$. Since $X$ has equal Hausdorff and upper box dimensions, we get $\dim_{\rm H} X\ge 1+\gamma.$

\medskip

\section{Embeddings of self-similar sets and  proofs of the remaining statements }\label{section Proofs of the remaining results} 
In this section, we first present and prove an application of Theorem \ref{main theorem 1} in the study of affine embeddings of self-similar sets, and then we complete the proofs of the remaining statements: Theorem \ref{theorem stronger version of intersection conj} and the claim that Conjecture \ref{orbit conjecture} holds outside a set of Hausdorff dimension zero.

\subsection{Embeddings of self-similar sets}
Let $\Phi=\{\phi_i(x)=\alpha_ix+a_i\}_{i=1}^m$ and $\Psi=\{\psi_i(x)=\beta_jx+b_j\}_{j=1}^l$ be two self-similar IFSs on $\R$. We denote their attractors by $X_\Phi$ and $X_\Psi$, respectively. The problem of affine embeddings of self-similar sets was studied in \cite{FHR}. The following conjecture is a special case of \cite[Conjecture 1.2]{FHR}.
\begin{conjecture}\label{embedding conjecture}
Let $\Phi,\Psi$ be the self-similar IFSs defined above. Assume that $X_\Psi$ is not a singleton and $\Phi$ satisfies the SSC and $\dim_{\rm H}X_\Phi<1$. If there exist real numbers $v,u\neq 0$ such that $uX_\Psi+v\subset X_\Phi$, then for each $1\le j\le l$, there exist rational numbers $r_{i,j}\ge 0$ such that $\beta_j=\prod_{i=1}^m\alpha_i^{r_{i,j}}$.
\end{conjecture}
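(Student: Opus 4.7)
My plan is to exploit Theorem \ref{main theorem 1} via a reduction to the homogeneous case. First I would observe that, since $\alpha_i, \beta_j \in (0,1)$, the conclusion ``$\beta_j = \prod_i \alpha_i^{r_{i,j}}$ with $r_{i,j}\in\Q_{\geq 0}$'' is equivalent to the existence of some $I = (i_1,\ldots,i_k) \in \Lambda^*$ with $\beta_j \sim \alpha_I := \prod_l \alpha_{i_l}$ (i.e., $\log\beta_j/\log\alpha_I \in \Q$): clearing denominators gives $\beta_j^N = \prod \alpha_i^{n_i}$ with $n_i\in\Z_{\geq 0}$, and one takes $I$ to be any word of length $\sum n_i$ using each letter $i$ exactly $n_i$ times, the converse being similar. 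In the homogeneous model case, where $\Phi$ and $\Psi$ have single contraction ratios $\alpha$ and $\beta$ respectively, the claim is immediate: since $uX_\Psi+v \subset X_\Phi$, the intersection $(uX_\Psi+v)\cap X_\Phi$ equals $uX_\Psi+v$, which has Hausdorff dimension $\dim_{\rm H} X_\Psi > 0$ (as $X_\Psi$ is not a singleton); then $\alpha\nsim\beta$ combined with Theorem \ref{main theorem 1} (used with SSC in place of OSC, as permitted by Remark \ref{Remark to thm main 1}(2)) would force $\dim_{\rm H} X_\Phi\geq 1$, contradicting the hypothesis $\dim_{\rm H} X_\Phi < 1$.

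For the general non-homogeneous case I would argue by contradiction, fixing $j$ and assuming $\beta_j \nsim \alpha_I$ for every $I \in \Lambda^*$. The plan is to reduce to the homogeneous setting via iterated sub-IFSs. Concretely, for a multi-index $\vec n=(n_1,\ldots,n_\ell)\in\Z_{\geq 0}^\ell$ with $n_j\geq 1$ and $|\vec n|=n$, the collection $T_{\vec n}$ of words of length $n$ using each letter $i$ exactly $n_i$ times indexes a homogeneous family $\{\psi_I:I\in T_{\vec n}\}\subset\Psi^n$ with common contraction ratio $\beta^{\vec n}:=\prod_i\beta_i^{n_i}$; pruning $T_{\vec n}$ to a sub-family $T'_{\vec n}$ with pairwise disjoint cylinders then yields a regular $\beta^{\vec n}$-self-similar sub-attractor $Y\subset X_\Psi$. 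Analogously, from $\Phi^k$ one extracts regular $\alpha^{\vec k}$-self-similar sub-attractors $Z_{\vec k}\subset X_\Phi$ (requiring no pruning, since $\Phi$ has SSC), whose dimensions can be made arbitrarily close to $\dim_{\rm H}X_\Phi$ via an appropriate choice of the relative frequencies $k_i/k$. A judicious choice of $\vec n$ and $\vec k$ ensures $\beta^{\vec n}\nsim\alpha^{\vec k}$, and also allows one to find, at the appropriate scale, a cylinder $\phi_J$ of $\Phi$ such that $uY+v$ rescales into $u'Y+v'\subset X_\Phi$ in a controlled way. Theorem \ref{main theorem 1}, applied to $Y$ and a regular homogeneous enveloping subset of $X_\Phi$ at this rescaled position, then yields the contradiction $\dim_{\rm H}X_\Phi\geq 1$ exactly as in the homogeneous case.

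The principal obstacle I expect is two-fold. First, without SSC for $\Psi$, the pruning of $T_{\vec n}$ to a sub-family with disjoint cylinders is delicate: one must verify that enough words survive to maintain positive dimension of the resulting attractor $Y$; this is a Moran-type combinatorial question that should be tractable using the non-triviality of $X_\Psi$ together with a pigeonhole argument in $\Psi^n$ for $n$ large. Second, propagating the multiplicative-independence assumption through the iteration: since $\log\beta^{\vec n}=\sum_i n_i\log\beta_i$ and $\log\alpha^{\vec k}=\sum_i k_i\log\alpha_i$, a relation $\beta^{\vec n}\sim\alpha^{\vec k}$ would give a non-trivial integer relation among $\{\log\beta_i,\log\alpha_i\}$; by choosing $\vec n$ to isolate the $\beta_j$-contribution (for instance, $n_j$ very large compared to the remaining $n_{j'}$) and using the hypothesis $\beta_j\nsim\alpha_I$ for every $I\in\Lambda^*$, one can show that no such relation survives. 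The final assembly---turning the embedding $uY+v\subset X_\Phi$ into a configuration where Theorem \ref{main theorem 1} applies with both sets regular homogeneous---is the step most likely to require care, and will presumably proceed by working within a well-chosen cylinder $\phi_J(X_\Phi)$ of $\Phi$ at the scale of $\mathrm{diam}(Y)$, exactly the kind of multi-scale structure the CP-process machinery of Sections \ref{section Constructions of CP-distributions on K}--\ref{section Proof of Theorem main 1} was designed to handle.
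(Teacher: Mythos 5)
The statement you were asked to prove is stated in the paper as a \emph{conjecture}: the paper itself only establishes the special case in which $\Phi$ is homogeneous (Corollary \ref{corollary embedding result}) and explicitly defers the general case to the forthcoming paper \cite{Wu1}. Your treatment of the homogeneous model case is essentially the paper's argument: from $uX_\Psi+v\subset X_\Phi$ one gets $(uX_\Psi+v)\cap X_\Phi=uX_\Psi+v$, a set of positive dimension, and Theorem \ref{main theorem 1} then forces $\dim_{\rm H}X_\Phi\ge1$ unless $\alpha\sim\beta$. Note, however, that even in the homogeneous-$\Phi$ case the paper has to work harder than you do when $\Psi$ is non-homogeneous or lacks separation: it uses the commuting words $\{\psi_{j_0}\circ\psi_j,\ \psi_j\circ\psi_{j_0}\}$ and $\{\psi_{j_0}\circ\psi_j^2,\ \psi_j^2\circ\psi_{j_0}\}$ (and, for separation, high iterates of two maps with distinct fixed points) to obtain homogeneous SSC sub-IFSs with ratios $\beta_{j_0}\beta_j$ and $\beta_{j_0}\beta_j^2$, and then subtracts the two rationality relations---both with the \emph{same} denominator $\log\alpha$---to isolate $\log\beta_{j_0}/\log\alpha\in\Q$.

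Your reduction of the general non-homogeneous $\Phi$ to this picture has a fatal gap. Replacing $X_\Phi$ by a homogeneous sub-attractor $Z_{\vec k}$ destroys the hypothesis: $uY+v\subset X_\Phi$ gives no control whatsoever on $(uY+v)\cap Z_{\vec k}$, which may well be empty, so Theorem \ref{main theorem 1} applied to $Y$ and $Z_{\vec k}$ yields no contradiction. The alternative you mention, a ``regular homogeneous enveloping subset of $X_\Phi$,'' does not exist in general: the covering trick of Proposition \ref{prop approx invariant sets by self-similar sets} produces a homogeneous superset of comparable dimension only because a $T_m$-invariant set satisfies $A\subset T_m^{-1}A$; a self-similar set with non-commensurable ratios $\alpha_i$ is not invariant under any single expanding similarity, and there is no reason the IFS generated by its covering intervals at a fixed scale has an attractor containing $X_\Phi$. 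Separately, your multiplicative-independence bookkeeping breaks down once the denominator $\log\alpha^{\vec k}$ is allowed to vary: the subtraction argument that isolates $\log\beta_j$ requires the two rationality relations to share a denominator, and ``choosing $n_j$ very large'' cannot rule out a rational relation, since rationality is not stable under perturbation. These are precisely the obstructions that make the non-homogeneous case genuinely harder; within the tools developed in this paper you can only prove Corollary \ref{corollary embedding result}, not the full conjecture.
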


Some special cases of Conjecture \ref{embedding conjecture} have been proved in \cite{FHR}, and more recently in \cite{Algom2016, FX}. As a corollary of Theorem \ref{main theorem 1}, we show that Conjecture \ref{embedding conjecture} holds under the assumption that $\Phi$ is homogeneous. 
\begin{corollary}\label{corollary embedding result}
Under the assumptions of Conjecture \ref{embedding conjecture}, suppose further that $\Phi$ is homogeneous: there exists $0<\alpha<1$ such that $\alpha_i=\alpha$ for each $1\le i\le m$. Then the conclusion of Conjecture \ref{embedding conjecture} holds, i.e., $\log \beta_j/\log \alpha\in \Q$  for each $1\le j\le l$.
\end{corollary}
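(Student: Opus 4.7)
\medskip

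\noindent\textbf{Proof plan for Corollary \ref{corollary embedding result}.}
The plan is to argue by contradiction and reduce to Theorem \ref{main theorem 1}. Suppose $\log \beta_{j_0}/\log \alpha \notin \Q$ for some $j_0$, i.e.\ $\alpha \nsim \beta_{j_0}$. I will construct a regular $\lambda$-self-similar subset $Y \subseteq X_\Psi$ with $\alpha \nsim \lambda$ and $\dim_{\rm H} Y > 0$. Since $uX_\Psi + v \subseteq X_\Phi$ we have $uY + v \subseteq X_\Phi$, hence $(uY+v)\cap X_\Phi = uY+v$. As $Y$ is regular self-similar its Hausdorff and upper box dimensions coincide, so $\overline{\dim}_{\rm B}\bigl((uY+v)\cap X_\Phi\bigr) = \dim_{\rm H} Y > 0$. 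Applying Theorem \ref{main theorem 1} to the regular $\lambda$-self-similar set $Y$ and to $X_\Phi$ (regular $\alpha$-self-similar by the homogeneity plus SSC hypotheses, cf.\ Remark \ref{Remark to thm main 1}(2)), one obtains
\[
\dim_{\rm H} Y \le \max\{0,\; \dim_{\rm H} Y + \dim_{\rm H} X_\Phi - 1\},
\]
which is absurd because $\dim_{\rm H} X_\Phi < 1$ and $\dim_{\rm H} Y > 0$.

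To construct $Y$: since $X_\Psi$ is not a singleton, the maps of $\Psi$ cannot all share a common fixed point, so there exists $j_1 \neq j_0$ whose fixed point $p_{j_1}$ differs from the fixed point $p_{j_0}$ of $\psi_{j_0}$. For each integer $k \geq 1$ the two composed maps $\psi_{j_0}\circ \psi_{j_1}^k$ and $\psi_{j_1}^k \circ \psi_{j_0}$ share the common contraction ratio $\beta_k := \beta_{j_0}\beta_{j_1}^k$; a direct computation of their translation parts shows that, as $k \to \infty$, their (unique) fixed points converge respectively to $\psi_{j_0}(p_{j_1})$ and $p_{j_1}$, which are distinct because $p_{j_1}$ is not a fixed point of $\psi_{j_0}$. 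Hence for $k$ large these two affine maps are distinct and have distinct fixed points. Moreover, $\log \beta_k/\log \alpha = \log \beta_{j_0}/\log \alpha + k\log \beta_{j_1}/\log \alpha$ can be rational for at most one value of $k$: if two such values gave rationals, subtracting them would force $\log \beta_{j_1}/\log \alpha \in \Q$, and then also $\log \beta_{j_0}/\log \alpha \in \Q$, contradicting our hypothesis. Fix such a $k$ for which $\alpha \nsim \beta_k$ and the fixed points are distinct.

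Iterate once more: the homogeneous two-map IFS $\bigl\{(\psi_{j_0}\circ \psi_{j_1}^k)^n,\ (\psi_{j_1}^k\circ \psi_{j_0})^n\bigr\}$ on $\R$ has common contraction $\lambda := \beta_k^n$ and the same two distinct fixed points for every $n \geq 1$. For $n$ sufficiently large, the images of a fixed compact interval $J$ containing both fixed points shrink enough to be pairwise disjoint, so the convex open set condition (in fact SSC) holds. The attractor $Y$ of this IFS is therefore a regular $\lambda$-self-similar set with $\alpha \nsim \lambda$, and $Y \subseteq X_\Psi$ by invariance of $X_\Psi$ under the $\Psi$-semigroup, while $\dim_{\rm H} Y = \log 2/(-n\log \lambda) > 0$. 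The main technical obstacle is precisely this combinatorial construction: the cyclically permuted words $j_0 j_1^k$ and $j_1^k j_0$ are tailored to simultaneously enforce equal contraction, distinct fixed points, and incommensurability with $\alpha$, while the additional $n$-fold iteration supplies the separation; once these are in place the reduction to Theorem \ref{main theorem 1} is immediate.
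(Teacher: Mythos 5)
Your proof is correct, and it rests on the same core trick as the paper's: the cyclically permuted composition words $j_0 j_1^k$ and $j_1^k j_0$ produce a homogeneous two-map sub-IFS of the semigroup generated by $\Psi$, to which Theorem \ref{main theorem 1} (with SSC in place of the convex open set condition, via Remark \ref{Remark to thm main 1}(2)) can be applied. The organization differs, though. The paper argues ``positively'': assuming first that $X_\Psi$ satisfies the SSC, it applies Theorem \ref{main theorem 1} to the attractors of $\{\psi_{j_0}\psi_j,\psi_j\psi_{j_0}\}$ and $\{\psi_{j_0}\psi_j^2,\psi_j^2\psi_{j_0}\}$ to conclude that both $\log(\beta_{j_0}\beta_j)/\log\alpha$ and $\log(\beta_{j_0}\beta_j^2)/\log\alpha$ are rational, and subtracts; the general case is then reduced to the SSC case by passing to $\{\psi_{j_1}^n,\psi_j^n\}$ for large $n$. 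You instead argue by contradiction and fold everything into a single construction: you choose $k$ so that $\alpha\nsim\beta_{j_0}\beta_{j_1}^k$ (possible since under the contradiction hypothesis at most one $k$ gives a rational ratio) and with distinct fixed points, and you manufacture the separation by a further $n$-fold iteration of the two composed maps rather than by a case split. Your route needs one extra small verification (distinctness of the limiting fixed points $\psi_{j_0}(p_{j_1})$ and $p_{j_1}$, which you supply correctly), but in exchange it avoids the two-relation subtraction and the SSC/non-SSC dichotomy, invoking Theorem \ref{main theorem 1} only once. Two trivial slips that do not affect the argument: the dimension of $Y$ is $\log 2/(-\log\lambda)=\log 2/(-n\log\beta_k)$, not $\log 2/(-n\log\lambda)$; and one should note explicitly that $J$ can be taken forward-invariant under both composed maps (any sufficiently large interval works, since they are contractions).
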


\begin{proof}[Proof of Corollary \ref{corollary embedding result}]
We first prove the conclusion under the assumption that $X_{\Psi}$ satisfies the SSC.   
Fix any $j_0\in \{1,\cdots,l\}$, we will show that $\log \beta_{j_0}/\log \alpha\in \Q$.  Choose  any $j\in \{1,\cdots,l\}\setminus \{j_0\}$, let $X_{1}$ be the attractor of  the homogeneous self-similar IFS $\{\psi_{j_0}\circ\psi_{j}, \psi_{j}\circ \psi_{j_0}\}$. Since $X_{\Psi}$ satisfies the SSC,  the same holds for $X_{1}$. Note that $X_{1}\subset X_\Psi$, thus by hypothesis we  have $uX_{1}+v\subset X_\Phi$. We claim that $\log (\beta_{j_0}\beta_j)/\log \alpha\in \Q$. Otherwise, by Theorem \ref{main theorem 1} (and part (2) of Remark \ref{Remark to thm main 1}), we would have 
$$\dim_{\rm H}(uX_{1}+v)\cap X_\Phi\le \max\{0,\dim_{\rm H}X_{1}+\dim_{\rm H}X_\Phi-1\}<\dim_{\rm H}X_{1},$$
which contradicts the fact $(uX_{1}+v)\cap X_\Phi=uX_{1}+v$. Similarly, we can consider the IFS $\{\psi_{j_0}\circ\psi_{j}^2, \psi_{j}^2\circ \psi_{j_0}\}$ and deduce that $\log (\beta_{j_0}\beta_j^2)/\log \alpha\in \Q$. Then we get $\log \beta_{j_0}/\log \alpha\in \Q$.  

Now we consider general  $X_{\Psi}$. Fix any $j_1\in \{1,\cdots,l\}$, we will show that $\log \beta_{j_1}/\log \alpha\in \Q$.   Since $X_{\Psi}$ is not a singleton, there exists $j\in \{1,\cdots,l\}$ such that $\psi_{j_1}$ and $\psi_{j}$ have different fixed points.  From this we deduce that for large enough $n$ the IFS $\{\psi_{j_1}^n, \psi_{j}^n\}$ satisfies the SSC. Let $X_{2}$ be the attractor of this IFS. Then we have $uX_{2}+v\subset X_\Phi$. From this and what we have just proved, we deduce that $\log \beta_{j_1}^n/\log \alpha\in \Q,$ which in turn implies that $\log \beta_{j_1}/\log \alpha\in \Q$.
\end{proof}

\subsection{Proofs of the remaining statements}
We first complete the proof of  Theorem \ref{theorem stronger version of intersection conj}.
Following Furstenberg, we call $C\subset \R$ a $p$-Cantor set if it is the attractor of certain IFS $\mathcal{F}=\{x/p+i/p\}_{i\in \Lambda}$ for some $\Lambda\subset \{0,\cdots, p-1\}$. Clearly, each $p$-Cantor set is a regular $1/p$-self-similar set.

\begin{proposition}\label{prop approx invariant sets by self-similar sets}
Let $A\subset \T=[0,1)$ be a $T_m$-invariant closed set. Then for any $\epsilon >0$, there exist $k\in \N$ and an $m^k$-Cantor set $\widetilde{A}$ such that $A\subset \widetilde{A}$ and $\dim_{\rm H}A\ge \dim_{\rm H}\widetilde{A}-\epsilon$.
\end{proposition}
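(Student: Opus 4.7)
The plan is to define $\widetilde{A}$ explicitly from the base-$m^k$ structure of $A$, and then to bound its Hausdorff dimension using the classical coincidence of Hausdorff and box dimensions for $T_m$-invariant closed sets.

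For each $k\ge 1$, I would set $\Lambda_k=\{d\in\{0,1,\ldots,m^k-1\}:A\cap[d/m^k,(d+1)/m^k]\neq\emptyset\}$ and let $\widetilde{A}_k$ be the attractor of the IFS $\{\phi_d(x)=(x+d)/m^k\}_{d\in\Lambda_k}$, which is by construction an $m^k$-Cantor set. For the inclusion $A\subset\widetilde{A}_k$, I pick any $x\in A$ and let $d_n(x)=\lfloor m^k\,T_{m^k}^{n-1}(x)\rfloor$ denote the $n$-th base-$m^k$ digit of $x$ obtained by iterating $T_{m^k}$. Since $A$ is $T_m$-invariant and hence $T_{m^k}$-invariant, the point $T_{m^k}^{n-1}(x)$ lies in $A\cap[d_n/m^k,(d_n+1)/m^k]$, so $d_n\in\Lambda_k$ for every $n$; it follows that $x=\lim_{n\to\infty}\phi_{d_1}\circ\cdots\circ\phi_{d_n}(0)\in\widetilde{A}_k$.

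For the dimension estimate, the images $\phi_d([0,1])$ have pairwise disjoint interiors, so the IFS satisfies the open set condition and $\dim_{\rm H}\widetilde{A}_k=\log|\Lambda_k|/(k\log m)$. A short submultiplicativity argument yields $|\Lambda_{k+\ell}|\le|\Lambda_k|\cdot|\Lambda_\ell|$: if $d=am^\ell+b\in\Lambda_{k+\ell}$ with $0\le b<m^\ell$, the inclusion $[d/m^{k+\ell},(d+1)/m^{k+\ell}]\subset[a/m^k,(a+1)/m^k]$ gives $a\in\Lambda_k$, and applying $T_{m^k}$ to any witness point in $A\cap[d/m^{k+\ell},(d+1)/m^{k+\ell}]$ produces a point of $A$ in $[b/m^\ell,(b+1)/m^\ell]$, so $b\in\Lambda_\ell$. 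Fekete's lemma then guarantees that $\lim_{k\to\infty}\log|\Lambda_k|/(k\log m)$ exists and equals $\overline{\dim}_{\rm B}A$.

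The crucial ingredient, which I expect to be the main obstacle, is the classical identity $\dim_{\rm H}A=\overline{\dim}_{\rm B}A$ for closed $T_m$-invariant $A$, a result essentially due to Furstenberg \cite{Furstenberg67}. I would establish it by coding $A$ as a subshift $X\subset\{0,\ldots,m-1\}^{\N}$, noting that $|\Lambda_{kn}|$ coincides up to factors uniform in $n$ with the number of admissible length-$kn$ words of $X$, and then pushing forward a measure of maximal entropy on $X$ to $[0,1]$: the resulting measure is supported on $A$ and has exact local dimension $h_{\mathrm{top}}(X)/\log m=\overline{\dim}_{\rm B}A$, forcing $\dim_{\rm H}A\ge\overline{\dim}_{\rm B}A$, while the reverse inequality is automatic. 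With this identity in hand, $\dim_{\rm H}\widetilde{A}_k=\log|\Lambda_k|/(k\log m)\to\dim_{\rm H}A$ as $k\to\infty$, so for any $\epsilon>0$ one picks $k$ large enough that $\widetilde{A}:=\widetilde{A}_k$ satisfies $\dim_{\rm H}\widetilde{A}\le\dim_{\rm H}A+\epsilon$, completing the proof.
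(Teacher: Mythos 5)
Your construction is the same as the paper's: you take the IFS of the $m^k$-adic contractions indexed by intervals meeting $A$, use $T_{m^k}$-invariance to get $A\subset\widetilde{A}_k$, compute $\dim_{\rm H}\widetilde{A}_k=\log|\Lambda_k|/(k\log m)$ via the open set condition, and invoke Furstenberg's identity $\dim_{\rm H}A=\overline{\dim}_{\rm B}A$ to choose $k$; the paper simply cites that identity and takes the convergence of $\log N_{m^{-k}}(A)/(k\log m)$ for granted, whereas you additionally supply the digit-expansion inclusion argument, the Fekete submultiplicativity step, and a sketch of the identity itself. The argument is correct and essentially identical in approach.
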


\begin{proof}
Let us denote by $\mathcal{D}^m_k$ the set of $k$-th level $m$-adic intervals of $\T=[0,1)$, i.e., $\mathcal{D}^m_k=\left\{[i/m^k,(i+1)/m^k): 0\le i\le m^k-1\right\}$. Let $N_{m^{-k}}(A)$ be the number of elements in  $\mathcal{D}^m_k$ intersecting $A$. It is a classical result, due to Furstenberg \cite{Furstenberg67}, that any $T_m$-invariant closed set has equal Haudorff and box dimensions. Thus we have
$$\dim_{\rm H}A=\lim_{k\to\infty}\frac{\log N_{m^{-k}}(A)}{k\log m}.$$
Let us fix a large enough $k$ such that $\frac{\log N_{m^{-k}}(A)}{k\log m}\le \dim_{\rm H} A+\epsilon$. We consider the IFS 
$$\mathcal{F}=\left\{\frac{1}{m^k}x+\frac{i}{m^k}: 0\le i\le m^k-1 \ \textrm{  and } [i/m^k,(i+1)/m^k)\cap A\neq \emptyset\right\}.$$
Since $A$ is $T_m$-invariant, it is also $T_m^k$-invariant, from which we deduce that $A$ is a sub-attractor of $\mathcal{F}$, i.e., $A\subset \bigcup_{f\in \mathcal{F}}f(A)$.
 Let $\widetilde{A}$ be the attractor of $\mathcal{F}$. Then $\widetilde{A}$ is a $m^k$-Cantor set and $A\subset \widetilde{A}$. Now, it remains to show $\dim_{\rm H}A\ge \dim_{\rm H}\widetilde{A}-\epsilon$. For this, we only need to notice that $\widetilde{A}$ satisfies the open set condition and it is well known that
 its Hausdorff dimension is $\frac{\log N_{m^{-k}}(A)}{k\log m}$. By the choice of $k$, we get the desired result.

\end{proof}

\begin{proof}[Proof of Theorem \ref{theorem stronger version of intersection conj}]
Let $A\subset \T$ be closed and $T_p$-invariant and let $B\subset \T$ be closed and $T_q$-invariant, with $p\nsim q$. Fix any $\epsilon>0$. By Proposition \ref{prop approx invariant sets by self-similar sets}, for some large $k$ and $l$, there exist a $p^k$-Cantor set $\widetilde{A}$ and a $q^l$-Cantor set $\widetilde{B}$ such that $A\subset \widetilde{A}$, $\dim_{\rm H}A\ge \dim_{\rm H}\widetilde{A}-\epsilon$, $B\subset \widetilde{B}$ and $\dim_{\rm H}B\ge \dim_{\rm H}\widetilde{B}-\epsilon$. Now, from the hypothesis $p\nsim q$ we deduce that $p^k\nsim q^l$, thus we can apply Theorem \ref{main theorem 1} to the sets $\widetilde{A}$ and $\widetilde{B}$ to get
$$  \overline{\dim}_{\rm B}(u\widetilde{A}+v)\cap \widetilde{B}\le \max \{0,\dim_{\rm H}\widetilde{A}+\dim_{\rm H}\widetilde{B}-1\}.$$
From this we deduce that
$$\overline{\dim}_{\rm B}(uA+v)\cap B \le \max \{0,\dim_{\rm H}A+\dim_{\rm H}B-1\}+2\epsilon$$
Since $\epsilon $ is arbitrary, we get the desired result.
\end{proof}

We now show that Conjecture \ref{orbit conjecture} holds outside a set of Hausdorff dimension zero.
\begin{theorem}\label{thm zero dim exception set for orbit conjecture}
If  $p\nsim q$, then the set of $x\in [0,1]$ which do not satisfy
$$\dim_{\rm H}\overline{O_p(x)}+\dim_{\rm H}\overline{O_q(x)}\ge 1$$
has Hausdorff dimension zero; in fact it is a countable union of sets with upper box dimension zero.
\end{theorem}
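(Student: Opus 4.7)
The plan is to exhibit the exceptional set as a countable union of intersections of pairs of Cantor sets to which Theorem \ref{main theorem 1} (together with Proposition \ref{prop approx invariant sets by self-similar sets}) applies, each contributing upper box dimension zero.

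First I would observe that there are only countably many $p^k$-Cantor sets in total: for each fixed $k$ there are at most $2^{p^k}$ of them (one for each choice of digit subset $\Lambda\subset\{0,\dots,p^k-1\}$), and $k$ ranges over $\N$. Enumerate all $p^k$-Cantor sets (over all $k$) as $\{A_i\}_{i\ge 1}$ and all $q^\ell$-Cantor sets as $\{B_j\}_{j\ge 1}$. For each pair $(i,j)$ with $\dim_{\rm H}A_i+\dim_{\rm H}B_j<1$, Theorem \ref{main theorem 1} (applied with $\alpha=p^{-k}$, $\beta=q^{-\ell}$; note $p\nsim q$ implies $\alpha\nsim\beta$) gives
$$\overline{\dim}_{\rm B}\,A_i\cap B_j\le\max\{0,\dim_{\rm H}A_i+\dim_{\rm H}B_j-1\}=0.$$
So each $C_{i,j}:=A_i\cap B_j$ has upper box dimension $0$, and the countable union $E^*:=\bigcup_{(i,j):\,\dim_{\rm H}A_i+\dim_{\rm H}B_j<1}C_{i,j}$ is a countable union of sets of upper box dimension zero and hence has Hausdorff dimension zero.

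Next I would show that every exceptional $x$ lies in $E^*$. Fix $x\in[0,1]$ with $\dim_{\rm H}\overline{O_p(x)}+\dim_{\rm H}\overline{O_q(x)}<1$, and set $\delta:=1-\dim_{\rm H}\overline{O_p(x)}-\dim_{\rm H}\overline{O_q(x)}>0$. The set $\overline{O_p(x)}$ is a closed $T_p$-invariant subset of $[0,1]$, so by Proposition \ref{prop approx invariant sets by self-similar sets} there exist $k\in\N$ and a $p^k$-Cantor set $A_i\supset\overline{O_p(x)}$ with $\dim_{\rm H}A_i\le\dim_{\rm H}\overline{O_p(x)}+\delta/3$. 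In the same way, there exist $\ell\in\N$ and a $q^\ell$-Cantor set $B_j\supset\overline{O_q(x)}$ with $\dim_{\rm H}B_j\le\dim_{\rm H}\overline{O_q(x)}+\delta/3$. Adding gives $\dim_{\rm H}A_i+\dim_{\rm H}B_j\le 1-\delta/3<1$, while the trivial fact $x\in\overline{O_p(x)}\cap\overline{O_q(x)}\subset A_i\cap B_j=C_{i,j}$ places $x$ in $E^*$.

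Putting the two steps together yields the theorem. The only substantive ingredients are Theorem \ref{main theorem 1} and the approximation Proposition \ref{prop approx invariant sets by self-similar sets}; no new obstacle appears. One mild sanity check I would perform is that the hypothesis $\alpha\nsim\beta$ of Theorem \ref{main theorem 1} is preserved when passing from $(p,q)$ to $(p^k,q^\ell)$, which is immediate since $\log p^k/\log q^\ell=(k/\ell)\log p/\log q\notin\Q$ for any $k,\ell\ge 1$.
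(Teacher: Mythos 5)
Your proposal is correct and follows essentially the same route as the paper's proof: both cover the exceptional set by the countable family of intersections $\widetilde A\cap\widetilde B$ of $p^k$- and $q^\ell$-Cantor sets with $\dim_{\rm H}\widetilde A+\dim_{\rm H}\widetilde B<1$, obtained by applying Proposition \ref{prop approx invariant sets by self-similar sets} to the orbit closures, and then invoke Theorem \ref{main theorem 1} together with $p^k\nsim q^\ell$. No substantive difference.
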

\begin{proof}
Let $E=\left\{x\in [0,1]:\dim_{\rm H}\overline{O_p(x)}+\dim_{\rm H}\overline{O_q(x)}< 1\right\}$. We need to show that the set $E$ is a countable union of sets with upper box dimension zero. 

In the following, by a $T_m$-invariant set we always mean a $T_m$-invariant and closed set of $[0,1]$.
Let $$F_1=\left\{(A,B):A  \textrm{ is a } T_p\textrm{-invariant set}, B  \textrm{ is a } T_q\textrm{-invariant set} \textrm{ and } \dim_{\rm H}A+\dim_{\rm H}B< 1\right\}$$
and
$$F_2=\left\{(\widetilde{A},\widetilde{B}):\widetilde{A}  \textrm{ is a } p^k\textrm{-Cantor set}, \widetilde{B}  \textrm{ is a } q^l\textrm{-Cantor set} \textrm{ and } \dim_{\rm H}\widetilde{A}+\dim_{\rm H}\widetilde{B}< 1, k,l\in \N\right\}.$$
By Proposition \ref{prop approx invariant sets by self-similar sets}, for each pair $(A,B)\in F_1$, there exists $(\widetilde{A},\widetilde{B})\in F_2$ such that $A\subset \widetilde{A}$ and $B\subset \widetilde{B}$.
Thus we have
$$E\subset \bigcup_{(A,B)\in F_1}A\cap B\subset \bigcup_{(\widetilde{A},\widetilde{B})\in F_2}\widetilde{A}\cap \widetilde{B}.$$
Now, note that for each $k\in \N$ there are only finitely many $p^k$-Cantor sets and finitely many $q^k$-Cantor sets. Thus the cardinality of $F_2$ is at most countable. Since $p\nsim q$, we have $p^k\nsim q^l$ for any $k,l\in \N$. Thus by Theorem \ref{main theorem 1}, for each $(\widetilde{A},\widetilde{B})\in F_2$, we have
$$\overline{\dim}_{\rm B}(\widetilde{A}\cap \widetilde{B})\le \max\{0,\dim_{\rm H}\widetilde{A}+\dim_{\rm H}\widetilde{B}-1\}=0.$$
Hence $E$ is contained in a countable union of sets with upper box dimension zero. 
\end{proof}


\begin{thebibliography}{99}

\bibitem{Algom2016} A. Algom, Affine embeddings of Cantor sets on the line,  \emph{Journal of Fractal Geometry} 5 (2018), no. 4, 339--350. 

\bibitem{BFS2012} B. B\'ar\'any, A. Ferguson, and K. Simon, Slicing the Sierpinski gasket, \emph{Nonlinearity} 25, 2012, 1753--1770.

\bibitem{Downarowicz2011book} T. Downarowicz,
\emph{ Entropy in dynamical systems}, 
New Mathematical Monographs, 18. Cambridge University Press, Cambridge, 2011. xii+391 pp.

\bibitem{EWbook} M. Einsiedler, and T. Ward, \emph{Ergodic theory with a view towards number theory}, volume
259 of Graduate Texts in Mathematics. Springer-Verlag London, Ltd., London, 2011.

\bibitem{Falconer1995} K. Falconer, 
Sub-self-similar sets, 
\emph{Trans. Amer. Math. Soc.} 347 (1995), no. 8, 3121--3129. 

\bibitem{Falconerbook1} K. Falconer,  \emph{Techniques in Fractal Geometry}, John Wiley \& Sons Ltd., Chichester,
1997.

\bibitem{FJ1999} K. Falconer, and  M. J\"arvenp\"a\"a, 
Packing dimensions of sections of sets,
\emph{Math. Proc. Cambridge Philos. Soc.} 125 (1999), no. 1, 89--104. 

\bibitem{FM1996} K.  Falconer, and P. Mattila, The packing dimension of projections and sections of measures,
\emph{Math. Proc. Cambridge Philos. Soc.} 119 (1996), 695--713.

\bibitem{Fan94} A.-H. Fan, Sur les dimensions de mesures,  \emph{Studia Math.}  111 (1994), no. 1, 1--17. 


\bibitem{FanLauRao} A.-H. Fan, K.-S. Lau, and H. Rao, Relationships between different dimensions
of a measure,  \emph{Monatsh. Math.} 135 (2002), 191--201.

\bibitem{Feller1971book} W. Feller, \emph{ An Introduction to Probability Theory and Its Applications}, vol. II, 2nd ed. John Wiley \& Sons Inc., New York (1971).

\bibitem{FengPersonal} D.J. Feng, Private communication, 2015.

\bibitem{FHR} D.J. Feng, W. Huang, and H. Rao,  Affine embeddings and intersections of Cantor sets,  \emph{J. Math. Pures Appl.} 102 (2014), no. 6, 1062--1079.

\bibitem{FX} D.J. Feng, and Y. Xiong, Affine embedding of Cantor sets and dimension of $\alpha\beta$-sets, \emph{Israel J. Math.}  226 (2018),  no. 2, 805--826. 


\bibitem{Furstenberg67} H. Furstenberg, Disjointness in ergodic theory, minimal sets, and a problem in Diophantine approximation, \emph{Math. Systems Theory,} 1(1):1--49, 1967.

\bibitem{Furstenberg69} H. Furstenberg, Intersections of Cantor sets and transversality of semigroups,
\emph{Problems in Analysis} (Sympos. Salomon Bochner, Princeton Univ., Princeton, N.J., 1969), Princeton Univ. Press, Princeton, N.J., 1970, pp. 41--59.


\bibitem{Furstenberg77} H. Furstenberg, Ergodic behavior of diagonal measures and a theorem of Szemer\'edi on arithmetic progressions, \emph{J. Analyse Math.} 31 (1977), 204--256.

\bibitem{Furstenberg2008} H. Furstenberg, Ergodic fractal measures and dimension conservation, \emph{Ergodic Theory Dynam. Systems,} 28 (2008), 405--422.

\bibitem{Gavish} M. Gavish, Measures with uniform scaling scenery, \emph{Ergodic Theory Dynam. Systems,} 31 (2011), 33--48.

\bibitem{Hawkes75} J. Hawkes,  Some algebraic properties of small sets, \emph{Q. J. Math.} 26, 1975, 195--201.

\bibitem{Hochman2010} M. Hochman, Dynamics on fractals and fractal distributions, 2010, preprint. arXiv 1008.3731. 

\bibitem{Hochman2014} M. Hochman,  Lectures on dynamics, fractal geometry, and metric number theory, {\em J. Mod. Dyn.} 8 (2014), no. 3-4, 437--497.

\bibitem{HS2012} M. Hochman and P. Shmerkin, Local entropy averages and projections of fractal measures,    \emph{Ann. of Math.} (2) 175 (2012), no. 3, 1001--1059.


\bibitem{HS2015} M. Hochman and P. Shmerkin, Equidistribution from fractal measures, \emph{Invent. Math.} 202 (2015), no. 1, 427--479.

\bibitem{Johnson92} A. Johnson, Measures on the circle invariant under multiplication by a nonlacunary subsemigroup of the integers, \emph{Israel J. Math.}, 77(1-2):211--240, 1992.

\bibitem{KP1991} R. Kenyon and Y. Peres, Intersecting random translates of invariant Cantor sets. \emph{Invent. Math.} 104, 1991, 601--629.

\bibitem{Lindenstrauss2005} E. Lindenstrauss, Rigidity of multiparameter actions, \emph{Israel J. Math.} 149, 199--226, 2005. 

\bibitem{Marstrand54} J. M. Marstrand,  Some fundamental geometrical properties of plane sets of fractional dimension,
\emph{Proc. London Math. Soc.} 4, 1954, 257--302.

\bibitem{MattilaBook1} F. Mattila, \emph{Geometry of Sets and Measures in Euclidean Spaces}, vol. 44 of Cambridge
Studies in Advanced Mathematics. Cambridge University Press, Cambridge (1995).

\bibitem{OW75} D. Ornstein and B. Weiss,
Unilateral codings of Bernoulli systems, 
\emph{Israel J. Math.} 21 (1975), no. 2-3, 159--166. 

\bibitem{PS2009} Y. Peres and P. Shmerkin, Resonance between Cantor sets, \emph{Ergodic Theory Dynam. Systems}, 29 (2009), 201--221.

\bibitem{Rohlin52} V. Rohlin, On the fundamental ideas of measure theory, \emph{Amer. Math. Soc. Translation}, 1952 (1952),   no. 71, 55 pp.


\bibitem{Rudolph90} D. Rudolph, $\times$2 and $\times$3 invariant measures and entropy, \emph{Ergodic
Theory Dynam. Systems,} 10(2):395--406, 1990.

\bibitem{Shmerkin2011} P. Shmerkin, {\em Ergodic geometric measure theory}, lecture notes for a course in University of Oulu, 2011.

\bibitem{Shmerkin2015Survey} P. Shmerkin, Projections of self-similar and related fractals: a survey of recent developments.
In Christoph Bandt, Kenneth J. Falconer, and Martina Zhle, editors, \emph{Fractal Geometry and Stochastics V}, pages 53–74. Springer, 2015.


\bibitem{Shmerkin2016} P. Shmerkin, 
On Furstenberg's intersection conjecture, self-similar measures, and the $L^q$ norms of convolutions, 2016 preprint.

\bibitem{SS2015} P. Shmerkin and V. Suomala, Spatially independent martingales, intersections and applications.
\emph{Memoirs of the American Mathematical Society}, to appear.

\bibitem{Simmons2012} D. Simmons, 
Conditional measures and conditional expectation; Rohlin's disintegration theorem, 
\emph{Discrete Contin. Dyn. Syst.} 32 (2012), no. 7, 2565--2582. 

\bibitem{Sinai62} Ja. G. Sinai,
A weak isomorphism of transformations with invariant measure,  
\emph{Dokl. Akad. Nauk SSSR.} 147 (1962), 797--800. 

\bibitem{Sinai64} Ja. G. Sinai,
On a weak isomorphism of transformations with invariant measure, 
\emph{Mat. Sb.} 63
(1964), 23--42; \emph{A.M.S. Translation.} 57 (1966), 123--143.

\bibitem{Waltersbook} P.  Walters, \emph{An Introduction to Ergodic Theory}, Grad. Texts  Math. 79,
Springer-Verlag, New York, 1982.

%\bibitem{Wu0} M. Wu, On intersections of linear and non-linear Cantor sets, and applications, \emph{in preparation}.

%\bibitem{Wu1} M. Wu, Affine embeddings of non-homogeneous self-similar sets, \emph{in preparation}.

\end{thebibliography}
\end{document}